\theoremstyle{definition}
\newtheorem{defin}{Definition}
\newtheorem{ex}{Example}
\theoremstyle{plain}
\newtheorem{theo}{Theorem}[section]
\newtheorem{lemma}[theo]{Lemma}
\newtheorem{obs}[theo]{Remark}
\newtheorem{prop}[theo]{Proposition}
\newtheorem{cor}[theo]{Corollary}
\newtheorem{theorem}{Theorem}
\newtheorem*{theorem-no}{Theorem}
\newtheorem{question}{Question}
\newtheorem*{example-no}{Example}
\numberwithin{equation}{section}
\renewenvironment{abstract}
{\par\noindent\textbf{\abstractname.}\ \ignorespaces}
{\par\medskip}
\title{Ahlfors regular conformal dimension and Gromov-Hausdorff convergence}
\author{Nicola Cavallucci}
\date{}
\begin{document}
\maketitle
\begin{abstract}
	\footnotesize
	We prove that the Ahlfors regular conformal dimension is upper semicontinuous with respect to Gromov-Hausdorff convergence when restricted to the class of uniformly perfect, uniformly quasi-selfsimilar metric spaces. Moreover we show the continuity of the Ahlfors regular conformal dimension in case of limit sets of discrete, quasiconvex-cocompact group of isometries of uniformly bounded codiameter of $\delta$-hyperbolic metric spaces under equivariant pointed Gromov-Hausdorff convergence of the spaces.
\end{abstract}
\tableofcontents

\section{Introduction}
The Ahlfors regular conformal gauge of a metric space $(X,d)$ is the set $\mathcal{J}(X,d)$ of all metrics on $X$ that are quasisymmetric equivalent to $d$.
By definition a homeomorphism $F\colon (X,d_X) \to (Y,d_Y)$ is a quasisymmetric equivalence if there exists a strictly increasing map $\eta \colon [0,+\infty) \to [0,+\infty)$ with $\eta(0)=0$ such that
$$\frac{d_Y(F(x), F(x'))}{d_Y(F(x), F(x''))} \leq \eta\left( \frac{d_X(x,x')}{d_X(x,x'')} \right)$$
for every $x,x',x'' \in X$ with $d_X(x,x'')> 0$. The notion of quasisymmetric maps was introduced in \cite{TV80} and it has played an important role in the study of quasiconformal structure on metric spaces. The \emph{Ahlfors regular conformal dimension} of a metric space $(X,d)$ is defined as
$$\text{CD}(X,d) := \inf\lbrace \text{HD}(X,d') \text{ s.t. } d'\in \mathcal{J}(X,d)\rbrace,$$
where  $\text{HD}$ denotes the Hausdorff dimension.
In general $\mathcal{J}(X,d)$ can be empty, implying CD$(X,d)=+\infty$. On the other hand the conformal dimension of any doubling, uniformly perfect metric space is always finite by \cite[Corollary 14.5]{Hei01}. 
There is a special class of metric spaces that are doubling and uniformly perfect: the class of perfect quasi-selfsimilar metric spaces (Proposition \ref{prop-perfect-implies-uniform}).
\begin{defin}
	\label{defin-qss}
	Let $\rho_0>0$ and $L_0\geq 1$. A compact metric space $(X,d)$ is said $(L_0,\rho_0)$-quasi-selfsimilar (shortly $(L_0,\rho_0)$-q.s.s.) if for every open ball $B(x,\rho)$ in $X$ with $0<\rho \leq \rho_0$ there is a map $\Phi\colon \left(B(x,\rho), \frac{\rho_0}{\rho}\cdot d\right) \to X$ which is $L_0$-biLipschitz and such that $\Phi(B(x,\rho)) \supseteq B(\Phi(x),\frac{\rho_0}{L_0})$.
%such that
%	\begin{itemize}
%		\item[(i)] $B(\Phi(x), \frac{\rho_0}{L_0}) \subseteq \Phi(B(x,\rho)) \subseteq B(\Phi(x), L_0\rho_0)$;
%		\item[(ii)] $\Phi\colon (B(x,\rho), \frac{\rho_0}{\rho}d) \to X$ is $L_0$-biLipschitz onto its image. 
%	\end{itemize}
\end{defin} 
\noindent The notation $\frac{\rho_0}{\rho}\cdot d$ means the metric obtained by multiplying the original metric $d$ by the positive number $\frac{\rho_0}{\rho}$.  In other words a metric space is $(L_0,\rho_0)$-quasi-selfsimilar if every ball of radius smaller than $\rho_0$ is, up to rescaling it to the right size, biLipschitz comparable to a ball of radius exactly $\rho_0$ of the same space.
\vspace{2mm}

\noindent This notion arises naturally in the study of limit sets of Gromov-hyperbolic groups and semi-hyperbolic rational fractions, see for instance \cite{Sul82}, \cite{Hai07}, \cite{BK13} and \cite{BM17}. Examples of spaces that are quasi-selfsimilar include Lipschitz manifolds, simplicial complexes with a metric of fixed constant curvature on each simplex, self-similar fractals, boundaries of cocompact Gromov-hyperbolic spaces and Julia sets of semi-hyperbolic rational fractions. Of course if one puts natural geometric constraints to each of the above classes then it is possible to quantify the quasi-selfsimilarity constants in terms of the constraints.
\vspace{2mm}

\noindent The purpose of this paper is to study the behaviour of the Ahlfors regular conformal dimension on quasi-selfsimilar metric spaces under Gromov-Hausdorff convergence. 
\begin{theorem}
	\label{theorem-main-intro}
	Let $(X_n,d_n)$ be a sequence of compact, $a_0$-uniformly perfect, $(L_0,\rho_0)$-q.s.s. spaces. Suppose it converges in the Gromov-Hausdorff sense to $(X_\infty, d_\infty)$. Then
	$\textup{CD}(X_\infty, d_\infty)\geq \limsup_{n\to +\infty} \textup{CD}(X_n, d_n)$.
\end{theorem}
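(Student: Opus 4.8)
The statement is vacuous when $\textup{CD}(X_\infty,d_\infty)=+\infty$, so assume it is finite. The plan is to prove the following, which yields the $\limsup$ bound on letting $p$ decrease to $\textup{CD}(X_\infty,d_\infty)$:
\begin{equation*}
p>\textup{CD}(X_\infty,d_\infty)\ \Longrightarrow\ \textup{CD}(X_n,d_n)\le p\ \text{ for all sufficiently large } n .
\end{equation*}
The reason a statement about $\textup{CD}$ can be transported across Gromov--Hausdorff approximations is that, for the spaces in play, $\textup{CD}$ is detected by finitely many \emph{fixed} scales. I will use the combinatorial description of the Ahlfors regular conformal dimension: for a compact, doubling, uniformly perfect $(X,d)$ and $p\ge 1$, fix a maximal $2^{-k}\rho_0$-separated set, make it the vertex set of the graph $G_k(X)$ by joining points at distance $\le 2\cdot2^{-k}\rho_0$, and let $M_{p,k}(X)$ be the combinatorial $p$-modulus of the family of chains in $G_k(X)$ joining $B(x,\rho_0/(8L_0))$ to the complement of $B(x,\rho_0/2)$, maximised over $x$. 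Up to multiplicative constants depending only on the data, $M_{p,k}$ is independent of the choices, and for this class $\textup{CD}(X,d)=\inf\{p\ge1:\ M_{p,k}(X)\to0\text{ as }k\to\infty\}$; this is recalled in the preliminary sections (in the spirit of \cite{BK13}), using that by Proposition~\ref{prop-perfect-implies-uniform} our spaces are doubling and uniformly perfect and that quasi-selfsimilarity makes them approximately self-similar.

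\textbf{Step 1: quasi-selfsimilarity collapses $\textup{CD}$ onto one scale, with uniform constants.} Feeding a ball $B(x,2^{-k}\rho_0)$ to Definition~\ref{defin-qss} gives an $L_0$-biLipschitz map of $(B(x,2^{-k}\rho_0),2^{k}d)$ onto a subset of $X$ containing a ball of radius $\rho_0/L_0$; pushing $(2^{-j}\rho_0)$-nets through these maps and comparing combinatorial moduli up to the bounded distortion of an $L_0$-biLipschitz map yields a submultiplicative-type estimate $M_{p,k+j}(X)\le A\,M_{p,k}(X)\,M_{p,j}(X)$ with $A=A(a_0,L_0,\rho_0,p)$ \emph{only}. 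Hence: if $M_{p,k_0}(X)<(2A)^{-1}$ for a single $k_0$, then $M_{p,k}(X)\to0$ geometrically and $\textup{CD}(X,d)\le p$; conversely, $\textup{CD}(X,d)<p$ forces $M_{p,k}(X)\to0$, so $M_{p,k_0}(X)$ is as small as one likes once $k_0$ is large. The crucial point is that $A$ is the same for every member of the sequence — which is exactly why the theorem fixes the class $(a_0;L_0,\rho_0)$; note also that these hypotheses force $\mathrm{diam}(X)\gtrsim\rho_0/(L_0 a_0)$, so no space in the sequence, nor the limit, degenerates to a point.

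\textbf{Step 2: a fixed scale is Gromov--Hausdorff-stable.} First, $(X_\infty,d_\infty)$ again belongs, with controlled constants, to the same class: it is $O(a_0)$-uniformly perfect and $O(L_0)$-quasi-selfsimilar, the defining biLipschitz maps being extracted in the limit by a compactness (Arzel\`a--Ascoli) argument among the uniformly bounded compact pieces. Fix $p>\textup{CD}(X_\infty,d_\infty)$. By Step 1 applied to $X_\infty$, choose $k_0$ so large that $M_{p,k_0}(X_\infty)<(2AC)^{-1}$, where $C=C(a_0,L_0,\rho_0,k_0,p)$ is the comparison constant below. Since $2^{-k_0}\rho_0$ is a fixed macroscopic scale while the Gromov--Hausdorff distances $\varepsilon_n\to0$, an $\varepsilon_n$-approximation carries a $2^{-k_0}\rho_0$-net of $X_n$ to an almost-net of $X_\infty$ and back, changing the adjacency relation and the number of net points meeting any fixed ball only boundedly (here doubling and uniform perfectness keep the combinatorics of bounded complexity), so a routine comparison of combinatorial moduli of roughly isometric graphs with corresponding marked sets gives $M_{p,k_0}(X_n)\le C\,M_{p,k_0}(X_\infty)<(2A)^{-1}$ for all large $n$. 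Applying the first half of Step 1 to $X_n$ yields $\textup{CD}(X_n,d_n)\le p$ for all large $n$, as desired.

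The main obstacle is precisely the comparison $M_{p,k_0}(X_n)\le C\,M_{p,k_0}(X_\infty)$ of Step 2: combinatorial modulus at scale \emph{exactly} $2^{-k_0}\rho_0$ is not a continuous function of the space, because the net and the graph $G_{k_0}$ can jump when distances sit right on the separation or adjacency threshold. I will handle this by letting the two thresholds range over a fixed window and invoking the quasi-invariance of $M_{p,k}$ under bounded changes of thresholds and under bounded-scale quasi-isometries of the graph, so that an $\varepsilon_n$-perturbation costs only the multiplicative factor $C$; uniform perfectness and doubling are what make the nets non-degenerate and of bounded local complexity, so that $C$ is genuinely independent of $n$. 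A point logically prior to all of this is the combinatorial description $\textup{CD}=\inf\{p:M_{p,k}\to0\}$ itself — equivalently, that for uniformly perfect quasi-selfsimilar spaces the infimum $\inf\{\textup{HD}(X,d')\}$ defining $\textup{CD}$ is attained in value over Ahlfors regular metrics in the gauge — which is where the preliminary sections, \cite{BK13} and Proposition~\ref{prop-perfect-implies-uniform} enter.
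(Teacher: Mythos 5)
Your architecture is essentially the paper's proof arranged contrapositively: both rest on (a) the combinatorial-modulus characterisation of $\mathrm{CD}$ (Theorem \ref{theo-CD-modulus}), (b) the reduction of the modulus to a fixed finite set of macroscopic scales for q.s.s.\ spaces (Proposition \ref{prop-key}), (c) a submultiplicative estimate, which you use as ``one small scale forces $\mathrm{CD}\le p$'' and which the paper packages as the uniform lower bound $\lambda_0$ of Proposition \ref{prop-uniform-lower-bound-p-modulus} for $p<\mathrm{CD}$, and (d) a fixed-scale comparison of moduli across the Gromov--Hausdorff approximation in the direction $\mathrm{Mod}(X_n)\lesssim \mathrm{Mod}(X_\infty)$, which is exactly what Steps 2--4 of the paper's proof do via ultralimits and the net bijections $\pi_n$, with the parameter losses absorbed by Lemmas \ref{lemma-rescaling} and \ref{lemma-lambda-rescaling}. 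So the route is the same; the difference is only in which space the dichotomy is applied to.

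There is, however, a genuine gap in how you source the uniformity of your constants, which you yourself call ``the crucial point''. You assert that the submultiplicativity constant $A$ (and the comparison constant $C$) depends only on $(a_0,L_0,\rho_0,p)$, ``which is exactly why the theorem fixes the class''. This is not true: these constants also involve the doubling constant, and the class of compact, $a_0$-uniformly perfect, $(L_0,\rho_0)$-q.s.s.\ spaces with bounded diameter has no uniform doubling constant --- for instance the cubes $[0,\rho_0]^N$ with the sup metric are $(L_0,\rho_0)$-q.s.s.\ and $\tfrac12$-uniformly perfect with $L_0$ independent of $N$, yet their doubling constants (and their conformal dimensions $N$) are unbounded. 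Uniform doubling must instead be extracted from the convergence itself; this is precisely Step 1 of the paper's proof of Theorem \ref{theorem-main-intro}, a compactness argument using the compact limit $X_\infty$, and without it your argument fails, since $A$ could a priori blow up with $n$. A second, smaller point: as written your choice of $k_0$ is circular, because you require $M_{p,k_0}(X_\infty)<(2AC)^{-1}$ while allowing $C=C(\dots,k_0,\dots)$, and you only know $\liminf_k M_{p,k}(X_\infty)=0$ with no rate. This is repaired by making the fixed-scale comparison with a constant independent of $k_0$, at the price of a bounded change of the annulus radii and of the adjacency parameter $\lambda$, and then converting back with Lemmas \ref{lemma-rescaling} and \ref{lemma-lambda-rescaling}; this is how the paper's Step 4 (with parameters $3,4,\lambda=10$ versus $\tfrac{13}{4},\tfrac{15}{4},\lambda=30$) is organised. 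With these two repairs your proposal coincides, in substance, with the paper's argument.
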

For quasi-selfsimilar spaces uniform perfectness is quantitatively equivalent to a uniform lower bound of the diameter of the balls $B(\Phi(x),\frac{\rho_0}{L_0})$ appearing in Definition \ref{defin-qss}, see Proposition \ref{prop-perfect-implies-uniform}.\\
Theorem \ref{theorem-main-intro} is false if the spaces are not $(L_0,\rho_0)$-q.s.s.: the sequence $X_n = [0,1/n] \subseteq \mathbb{R}$ converges in the Gromov-Hausdorff sense to $X_\infty = \lbrace 0 \rbrace$, but CD$(X_n, d_E)=1$ for every $n$ while CD$(X_\infty, d_E) = 0$. Here $d_E$ is the standard Euclidean metric.
Moreover the upper semicontinuity in Theorem \ref{theorem-main-intro} cannot be improved to continuity in general.
\begin{ex}
	\label{example-cantor-intro}
	Let $X_n$ be the set built in this way: we start with $[0,1]$ and we remove the central segment of length $\frac{1}{2n+1}$. We do the same for each of the two remaining parts. We continue this procedure infinitely many times and we call $X_n$ the resulting metric space endowed with the Euclidean metric $d_E$. For instance $X_1$ is the standard Cantor set. The sequence $X_n$ converges to $X_\infty = [0,1]$ in the Gromov-Hausdorff sense. However \textup{CD}$(X_n, d_E)=0$ for every $n$ by \cite[Proposition 15.11]{DS97} (see also \cite[Theorem 2.16]{Pia11}), while \textup{CD}$(X_\infty, d_E)=1$.
\end{ex}

On the other hand we have continuity in a particular setting. In \cite{Cav21ter} the author studied the class $\mathcal{M}(\delta,D)$ of triples $(X,x,\Gamma)$, where $X$ is a proper $\delta$-hyperbolic metric space, $\Gamma$ is a discrete, non-elementary, quasiconvex-cocompact, torsion-free group of isometries of $X$ with codiameter bounded above by $D$ and $x$ belongs to the quasiconvex hull of the limit set $\Lambda(\Gamma)$. We refer to Section \ref{sec-Gromov-application} for more details about these terms. One of the main results of \cite{Cav21ter} is the closeness of $\mathcal{M}(\delta,D)$ under equivariant pointed Gromov-Hausdorff convergence. Under this convergence it is possible to prove that the limit sets $\Lambda(\Gamma_n)$ and $\Lambda(\Gamma_\infty)$ are quasisymmetric equivalent for $n$ big enough (see Section \ref{sec-proof-B}).
As a consequence we get the following.
\begin{theorem}
	\label{theorem-main-introduction}
	Let $(X_n,x_n,\Gamma_n) \subseteq \mathcal{M}(\delta,D)$ be a sequence of triples converging in the equivariant pointed Gromov-Hausdorff sense to $(X_\infty,x_\infty,\Gamma_\infty)$. Then, for suitable metrics, the sequence $\Lambda(\Gamma_n)$ is uniformly perfect and uniformly q.s.s. and converges in the Gromov-Hausdorff sense to $\Lambda(\Gamma_\infty)$. Moreover $\textup{CD}(\Lambda(\Gamma_\infty)) = \lim_{n\to +\infty}\textup{CD}(\Lambda(\Gamma_n)).$
\end{theorem}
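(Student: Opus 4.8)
\emph{Setup and choice of metrics.} The plan is to realize each limit set as a metric space on which the hypotheses of Theorem \ref{theorem-main-intro} hold uniformly, and then to exploit the genuinely new input — quasisymmetry between the $\Lambda(\Gamma_n)$'s and $\Lambda(\Gamma_\infty)$ for $n$ large — to pin down the value of the conformal dimension. First I would fix a visual parameter $a>0$ depending only on $\delta$, small enough that on the boundary of any proper $\delta$-hyperbolic space the expression $e^{-a(\cdot\,|\,\cdot)_x}$ is biLipschitz equivalent to a genuine distance, and endow $\Lambda(\Gamma_n)$ with the visual metric $d_{x_n,a}$ based at $x_n$ (and $\Lambda(\Gamma_\infty)$ with $d_{x_\infty,a}$). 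These are the ``suitable metrics'' in the statement.

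\emph{Uniform perfectness and uniform quasi-selfsimilarity.} Since $\Gamma_n$ is non-elementary with codiameter $\leq D$, it contains two independent hyperbolic isometries whose translation lengths are bounded in terms of $\delta,D$, so $\mathrm{diam}(\Lambda(\Gamma_n),d_{x_n,a})$ is bounded below by a constant depending only on $\delta,D,a$. Given a ball $B(\xi,\rho)$ in $\Lambda(\Gamma_n)$ I would pick $g\in\Gamma_n$ moving $x_n$ a distance $\approx \tfrac1a\log\tfrac1\rho$ towards $\xi$ — possible up to an additive error $\leq D$ by quasiconvex-cocompactness — and observe, via the change-of-basepoint (Möbius-type) formula for visual metrics, that the boundary action of $g^{-1}$ is biLipschitz from $\bigl(B(\xi,\rho),\tfrac{\rho_0}{\rho}\,d_{x_n,a}\bigr)$ onto its image with biLipschitz constant and image radius depending only on $\delta,D,a$. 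This yields uniform $(L_0,\rho_0)$-q.s.s.; uniform perfectness then follows from Proposition \ref{prop-perfect-implies-uniform} together with the diameter lower bound. All the relevant estimates are of the quantitative type developed in \cite{Cav21ter} and depend on $n$ only through $\delta,D$.

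\emph{Gromov-Hausdorff convergence of the limit sets and quasisymmetry.} The equivariant pointed Gromov-Hausdorff convergence $(X_n,x_n,\Gamma_n)\to(X_\infty,x_\infty,\Gamma_\infty)$ (together with the closedness of $\mathcal{M}(\delta,D)$ from \cite{Cav21ter}) provides, for $n$ large, isomorphisms $\Gamma_\infty\xrightarrow{\ \sim\ }\Gamma_n$ and $\varepsilon_n$-equivariant $(1+\varepsilon_n,\varepsilon_n)$-quasi-isometries $f_n$ between the quasiconvex hulls of the limit sets, with $\varepsilon_n\to 0$. Each $f_n$ extends to a homeomorphism $\partial f_n\colon\Lambda(\Gamma_\infty)\to\Lambda(\Gamma_n)$; since $f_n$ is $\varepsilon_n$-close to an isometry on every ball of fixed radius about $x_\infty$, the Gromov products $(\partial f_n\xi\,|\,\partial f_n\xi')_{x_n}$ differ from $(\xi\,|\,\xi')_{x_\infty}$ by an amount tending to $0$ uniformly wherever they stay bounded, so $\partial f_n$ is a $(1+o(1))$-biLipschitz-up-to-small-additive-error map between the visual metrics and hence $(\Lambda(\Gamma_n),d_{x_n,a})\to(\Lambda(\Gamma_\infty),d_{x_\infty,a})$ in the Gromov-Hausdorff sense. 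Moreover, viewed as quasi-isometries of hyperbolic spaces with constants bounded independently of $n$, the $f_n$ induce, by standard boundary functoriality, quasisymmetric equivalences $\partial f_n$ with distortion function depending only on $\delta,D,a$; as $f_n$ conjugates $\Gamma_\infty$ to $\Gamma_n$ up to bounded error, $\partial f_n$ carries $\Lambda(\Gamma_\infty)$ onto $\Lambda(\Gamma_n)$.

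\emph{Conclusion and main obstacle.} The Ahlfors regular conformal gauge, and hence $\textup{CD}$, is invariant under quasisymmetric equivalence — push forward any $d'\in\mathcal{J}(\Lambda(\Gamma_\infty))$ by $\partial f_n$, the Hausdorff dimension being unchanged — so $\textup{CD}(\Lambda(\Gamma_n))=\textup{CD}(\Lambda(\Gamma_\infty))$ for all $n$ large, which gives $\lim_{n\to+\infty}\textup{CD}(\Lambda(\Gamma_n))=\textup{CD}(\Lambda(\Gamma_\infty))$; this is consistent with, and refines, the inequality $\textup{CD}(\Lambda(\Gamma_\infty))\geq\limsup_n\textup{CD}(\Lambda(\Gamma_n))$ obtained by feeding the previous two steps into Theorem \ref{theorem-main-intro}. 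The main work lies in the quantitative claims: that the biLipschitz constants of the boundary ``zoom'' maps and the lower bound on $\mathrm{diam}\,\Lambda(\Gamma_n)$ depend only on $\delta,D,a$, and that the approximately equivariant quasi-isometries $f_n$ extracted from the convergence of the ambient spaces genuinely descend to quasisymmetries of the limit sets; both rely on the machinery of \cite{Cav21ter}.
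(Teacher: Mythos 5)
Your overall strategy matches the paper's: the quasi-selfsimilarity comes from ``zoom'' maps given by group elements pushed towards a boundary point (this is exactly Proposition \ref{prop-hyp-qss}), and the equality of conformal dimensions comes not from Theorem \ref{theorem-main-intro} but from exhibiting a quasisymmetry between $\Lambda(\Gamma_n)$ and $\Lambda(\Gamma_\infty)$ for $n$ large. (The paper reaches that quasisymmetry slightly differently: it invokes the isomorphism $\Gamma_n\cong\Gamma_\infty$ for large $n$ from \cite{Cav21ter} and routes everything through the Cayley graph via Svarc--Milnor and Propositions \ref{prop-convex-hull-boundary} and \ref{prop-isometry-quasisymmetry}, rather than extracting approximately equivariant quasi-isometries $f_n$ of the hulls directly from the convergence; the two mechanisms are equivalent in substance.) However, two of your steps have genuine gaps. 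First, uniform perfectness: a lower bound on $\mathrm{diam}(\Lambda(\Gamma_n))$ is not the hypothesis of Proposition \ref{prop-perfect-implies-uniform}(iii) --- what is needed there is a lower bound on the diameters of the balls $B(\Phi(\xi),\rho_0/L_0)$ appearing in Definition \ref{defin-qss}, i.e.\ of balls of a fixed radius centred at \emph{arbitrary} points of the limit set, and a global diameter bound does not give this; part (ii) of that proposition gives uniform perfectness from perfectness alone, but by a compactness argument that is not quantitative and so cannot be applied uniformly along the sequence. The paper instead derives uniform perfectness from uniform Ahlfors regularity (Proposition \ref{prop-Ahlfors-regularity}, Corollary \ref{cor-hyp-uniform-perfect} and Lemma \ref{lemma-Ahlfors-perfect}), which requires a uniform upper bound on the critical exponents $h_{\Gamma_n}$ supplied by the convergence; your one-line argument would need to be replaced by something of this kind, or by re-running the cocompactness argument at every point and scale to bound the local ball diameters.

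Second, the Gromov--Hausdorff convergence of the limit sets. You fix the visual metrics $d_{x_n,a}$ and $d_{x_\infty,a}$ in advance and then argue that convergence of Gromov products makes $\partial f_n$ a $(1+o(1))$-biLipschitz map. But a visual metric is only pinned down up to the two-sided comparison \eqref{visual-metric} with a fixed constant $V>1$ (even the standard ones \eqref{eq-new-visual} have a multiplicative gap), so convergence of the Gromov products only shows that $d_{x_n,a}(\partial f_n\xi,\partial f_n\xi')$ and $d_{x_\infty,a}(\xi,\xi')$ agree up to that fixed multiplicative constant plus $o(1)$ --- which is comparability, not convergence, and does not yield Gromov--Hausdorff convergence to the pre-chosen $(\Lambda(\Gamma_\infty),d_{x_\infty,a})$. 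This is precisely why the statement says ``for suitable metrics'' and why Proposition \ref{prop-convergence-visual-distances} defines the limit metric $D_\infty$ \emph{a posteriori}, as the metric induced on $\Lambda(\Gamma_\omega)$ by the ultralimit of the $(\Lambda(\Gamma_n),D_n)$, and then verifies separately that this induced metric satisfies the visual estimates. Your argument can be repaired along the same lines (take $D_\infty$ to be a limit of the pushed-forward metrics and check it is visual), but as written the convergence claim does not follow.
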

\noindent Here the Ahlfors regular conformal dimension is computed with respect to any visual metric on the limit sets, see Proposition \ref{prop-isometry-quasisymmetry} and the discussion below.
Motivated by Theorem \ref{theorem-main-introduction}, we propose the following question.
\begin{question}
	\label{question-continuity-general}
	Are there conditions on the metric spaces $X_n$ that ensures continuity of the conformal dimension under Gromov-Hausdorff convergence? 
\end{question}
\noindent It is useful to consider the following example (the author thanks M.Murugan for bringing the reference \cite{Tys00} to his attention).
\begin{ex}
	\label{conjecture}
	Let us repeat the construction of Example \ref{example-cantor-intro} in dimension $2$. For every $n$ we define $X_n$ in the following way: we start with $[0,1]^2$, we divide it into $(\frac{1}{2n+1})^2$ squares and we delete the central one. Now we do the same for every remaining squares. We repeat the procedure infinitely many times. We endow $X_n$ with the Euclidean metric $d_E$. For instance the space $X_1$ is the standard Sierpinski carpet. The sequence $(X_n, d_E)$ converges in the Gromov-Hausdorff sense to $X_\infty = [0,1]^2$, whose Ahlfors regular conformal dimension is $2$. In this case we have $\lim_{n\to +\infty}\textup{CD}(X_n, d_E)=2$ by a well-known argument (see for instance \cite[Theorem 3.4 and Example 3.2]{Tys00}).
\end{ex}
\noindent We will briefly discuss Question \ref{question-continuity-general} and the example above at the end of Section \ref{sec-upper}.

\section{Preliminaries}
\label{sec-basic}
We denote a metric space by $(X,d)$. The open (resp. closed) ball of center $x\in X$ and radius $\rho > 0$ is denoted by $B(x,\rho)$ (resp. $\overline{B}(x,\rho)$). \\
Given $r>0$ and $Y\subseteq X$ we say that a subset $S$ of $Y$ is $r$-separated if $d(x,y)> r$ for all $x,y\in S$, while a subset $N$ of $Y$ is a $r$-net if for all $y\in Y$ there exists $x\in N$ such that $d(x,y)\leq r$. It is straightforward from the definitions that a maximal $r$-separated subset of $Y$ is a $r$-net.\\
A metric space $(X,d)$ is said $D$-doubling if the cardinality of any $\frac{\rho}{2}$-separated subset inside any ball of radius $\rho$ is at most $D$.\\
A metric space $(X,d)$ is perfect if it has no isolated points, while it is said $a$-uniformly perfect, $0<a<1$, if $\overline{B}(x,\rho)\setminus B(x,a\cdot\rho) \neq \emptyset$ for all $x\in X$ and $0\leq \rho < \text{Diam}(X)$, where Diam denotes the diameter. Clearly every uniformly perfect metric space is perfect. \\
Gromov-Hausdorff convergence will be always considered in the class of compact metric spaces. With the notation $(X_n,d_n)\underset{\text{GH}}{\longrightarrow} (X_\infty, d_\infty)$ we mean that the sequence of compact metric spaces $(X_n,d_n)$ converges in the Gromov-Hausdorff sense to the compact metric space $(X_\infty, d_\infty)$.

\subsection{Ahlfors regular spaces}
A metric space $(X,d)$ is said $(A,s)$-Ahlfors regular, for given $A,s\geq 0$, if there is a measure $\mu$ on $X$ satisfying
$$\frac{1}{A}\cdot \rho^s \leq \mu(B(x,\rho)) \leq A\cdot \rho^s$$
for all $x\in X$ and all $0<\rho\leq \text{Diam}(X)$. 
The following lemma is classical.
\begin{lemma}
	\label{lemma-Ahlfors-perfect}
	Let $A_0\geq 1$ and $s_0>0$. Then there exists $0<a_0=a_0(A_0,s_0)<1$ such that
	every $(A,s)$-Ahlfors regular metric space $(X,d)$ with $A\leq A_0$ and $s\geq s_0$ is $a_0$-uniformly perfect.
\end{lemma}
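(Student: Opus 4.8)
The plan is to establish the quantitative statement that any $(A,s)$-Ahlfors regular space $(X,d)$ which is \emph{not} $a$-uniformly perfect must satisfy $a \ge A^{-2/s}$, and then to optimise over the admissible pairs $(A,s)$. Granting this claim, the lemma follows at once: since $A \ge 1$ (which is forced by comparing the two Ahlfors bounds on a small ball, the one-point space being vacuously uniformly perfect), the function $(A,s) \mapsto A^{-2/s}$ is nonincreasing in $A$ and nondecreasing in $s$ on the region $\{A \le A_0,\ s \ge s_0\}$, so $A^{-2/s} \ge A_0^{-2/s_0}$ there. Hence it suffices to set $a_0 := \tfrac{1}{2} A_0^{-2/s_0}$, which lies in $(0,1)$ because $A_0 \ge 1$ gives $A_0^{-2/s_0} \in (0,1]$; any space as in the statement that failed to be $a_0$-uniformly perfect would then force $a_0 \ge A^{-2/s} \ge A_0^{-2/s_0} > a_0$, a contradiction.

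To prove the claim, suppose $(X,d)$ is $(A,s)$-Ahlfors regular with witnessing measure $\mu$ and is not $a$-uniformly perfect. Then there exist $x \in X$ and $0 \le \rho < \mathrm{Diam}(X)$ with $\overline{B}(x,\rho) \setminus B(x, a\rho) = \emptyset$, equivalently $\overline{B}(x,\rho) \subseteq B(x, a\rho)$. First I would dispose of $\rho = 0$: there $\overline{B}(x,0) = \{x\} \neq \emptyset = B(x,0)$, so this case cannot occur; thus $\rho > 0$, and since $0 < a < 1$ we have $0 < a\rho < \rho \le \mathrm{Diam}(X)$, so both $\rho$ and $a\rho$ lie in the range of radii for which the Ahlfors inequalities apply. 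Then, using monotonicity of $\mu$ together with $B(x,\rho) \subseteq \overline{B}(x,\rho) \subseteq B(x,a\rho)$, I would chain
\[
\frac{1}{A}\,\rho^{s} \;\le\; \mu\big(B(x,\rho)\big) \;\le\; \mu\big(\overline{B}(x,\rho)\big) \;\le\; \mu\big(B(x,a\rho)\big) \;\le\; A\,(a\rho)^{s} \;=\; A\,a^{s}\,\rho^{s}.
\]
Dividing through by $\rho^{s} > 0$ gives $A^{-1} \le A a^{s}$, i.e.\ $a^{s} \ge A^{-2}$, and extracting the (positive) $s$-th root yields $a \ge A^{-2/s}$, as wanted.

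The argument is essentially a one-line comparison of the measures of two concentric balls, so there is no genuine obstacle; the only points deserving attention are the degenerate cases ($\rho = 0$, and $A_0 = 1$, where $A_0^{-2/s_0} = 1$ and any $a_0 \in (0,1)$ works) and the elementary monotonicity check showing that the least favourable admissible pair is $(A,s) = (A_0,s_0)$, which pins down the dependence $a_0 = a_0(A_0,s_0)$.
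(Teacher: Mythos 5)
Your proposal is correct and is essentially the paper's argument in contrapositive form: both rest on the same comparison $\frac{1}{A}\rho^{s}\leq\mu(\overline{B}(x,\rho))$ versus $\mu(B(x,a\rho))\leq A a^{s}\rho^{s}$, showing the annulus is nonempty whenever $a<A^{-2/s}$, followed by the same monotonicity observation that lets one take $a_0$ below $A_0^{-2/s_0}$ uniformly over $A\leq A_0$, $s\geq s_0$. The extra care you take with the degenerate cases ($\rho=0$, the one-point space, $A\geq 1$) is fine but does not change the substance.
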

\begin{proof}
%	Let $x\in X$ and $\rho >0$. Let $\lbrace x_1,\ldots,x_k\rbrace$ be a $\frac{\rho}{2}$-separated subset of $B(x,\rho)$, so the balls $B(x_i,\frac{\rho}{4})$ are disjoint and contained in $B(x,\frac{3}{2}\rho)$. Each of them has volume at least $\frac{1}{A}\cdot\frac{\rho^s}{4^s}$, while the volume of $B(x,\frac{3}{2}\rho)$ is at most $A\cdot \frac{3^s}{2^s} \cdot \rho^s$. Since
%	$$k\cdot \frac{1}{A}\cdot\frac{\rho^s}{4^s} \leq A\cdot \frac{3^s}{2^s} \cdot \rho^s$$
%	we get $k\leq 6^s\cdot A^2 = D(A,s)$.\\
	We claim that $(X,d)$ is $a$-uniformly perfect for all $a<A^{-2/s}$.
	Indeed for any such $a$, for every $x\in X$ and every $0<\rho\leq\text{Diam}(x)$ we have
	$$\mu(\overline{B}(x,\rho)) \geq \frac{1}{A}\rho^s, \qquad \mu(B(x,a\cdot\rho))\leq A\cdot a^s\rho^s < \frac{1}{A}\cdot\rho^s.$$
	Hence $\mu(\overline{B}(x,\rho) \setminus B(x,a\cdot\rho)) >0$ and in particular this set is not empty. It is then clear we can choose any $a_0 < A_0^{-2/s_0}$.
\end{proof}

\subsection{Quasi-selfsimilar spaces}

We collect now some basic properties of quasi-selfsimilar metric spaces, see Definition \ref{defin-qss}.
We say a quasi-selfimilar metric space $(X,d)$ has diameters bounded below by some $c_0>0$ if the ball $B(\Phi(x),\frac{\rho_0}{L_0})$ that appears in Definition \ref{defin-qss} has diameter $\geq c_0$ for every $x\in X$ and every $0<\rho\leq \rho_0$.

\begin{prop}[\textup{Compare with \cite[Lemma 2.2 and Proposition 2.3]{Pia11}}]
	\label{prop-perfect-implies-uniform}
	Let $(X,d)$ be a quasi-selfsimilar metric space as in Definition \ref{defin-qss}. Then:
	\begin{itemize}
		\item[(i)] it is doubling;
		\item[(ii)] if it is perfect then it is uniformly perfect;
		\item[(iii)] it is uniformly perfect if and only if it has diameters bounded below, quantitatively in terms of the relative constants and the diameter of $X$.
	\end{itemize}
\end{prop}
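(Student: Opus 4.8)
The plan is to establish (i) from compactness and the rescaling in Definition \ref{defin-qss}, to prove (iii) via a uniform lower bound on the diameters of metric balls, and to deduce (ii) from (iii) by a compactness argument. For (i): since $X$ is compact, a maximal $\tfrac{\rho_0}{4L_0}$-net of $X$ is finite, say of size $N$, and this $N$ bounds the cardinality of every $\tfrac{\rho_0}{2L_0}$-separated subset of $X$. Given a ball $B(x,\rho)$ with $\rho\le\rho_0$ and a $\tfrac{\rho}{2}$-separated subset $S$ of it, in the rescaled metric $\tfrac{\rho_0}{\rho}\cdot d$ the set $S$ is $\tfrac{\rho_0}{2}$-separated, so the bi-Lipschitz map $\Phi$ sends it to a $\tfrac{\rho_0}{2L_0}$-separated subset of $(X,d)$; hence $\#S=\#\Phi(S)\le N$. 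For $\rho\ge\rho_0$ one argues directly: a $\tfrac{\rho}{2}$-separated subset of any ball is in particular $\tfrac{\rho_0}{2}$-separated, so its cardinality is bounded by the size of a $\tfrac{\rho_0}{4}$-net of $X$. Taking the larger of the two constants gives a doubling constant.

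For the implication $\Leftarrow$ in (iii): if $X$ has diameters bounded below by $c_0$, then for $0<\rho\le\rho_0$ the map $\Phi$ associated to $B(x,\rho)$ sends some subset of $B(x,\rho)$ onto a set of $d$-diameter $\ge c_0$, so undoing the rescaling yields $\mathrm{Diam}\,B(x,\rho)\ge c_1\rho$ with $c_1:=\tfrac{c_0}{L_0\rho_0}$. For $\rho_0<\rho<\mathrm{Diam}(X)$ one only needs the trivial monotonicity $\mathrm{Diam}\,B(x,\rho)\ge\mathrm{Diam}\,B(x,\rho_0)\ge c_1\rho_0\ge\tfrac{c_1\rho_0}{\mathrm{Diam}(X)}\,\rho$. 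Thus $\mathrm{Diam}\,B(x,\rho)\ge c'\rho$ for all $x$ and all $0<\rho<\mathrm{Diam}(X)$, with $c':=c_1\min\{1,\rho_0/\mathrm{Diam}(X)\}$. Picking $u,v\in B(x,\rho)$ with $d(u,v)>\tfrac{c'}{2}\rho$, one of them has distance $>\tfrac{c'}{4}\rho$ from $x$ and lies in $\overline B(x,\rho)$; since $\tfrac{c'}{4}<1$ (because $c_0$ is at most the diameter of a ball of radius $\rho_0/L_0$), this proves $X$ is $\tfrac{c'}{4}$-uniformly perfect.

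For $\Rightarrow$ in (iii) and for (ii): if $X$ is $a$-uniformly perfect, then for every $y\in X$, provided $\tfrac{\rho_0}{2L_0}<\mathrm{Diam}(X)$, the uniform perfectness condition applied to $B(y,\tfrac{\rho_0}{2L_0})$ gives a point at distance in $[\tfrac{a\rho_0}{2L_0},\tfrac{\rho_0}{2L_0}]$ from $y$, hence inside $B(y,\rho_0/L_0)$, so $\mathrm{Diam}\,B(y,\rho_0/L_0)\ge\tfrac{a\rho_0}{2L_0}$; as this holds for every $y$, in particular for the points $\Phi(x)$ appearing in Definition \ref{defin-qss}, the diameters are bounded below. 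For (ii), by (iii) it suffices to show a \emph{perfect} q.s.s. space has diameters bounded below, which I would argue by contradiction: if there were sequences with $\mathrm{Diam}\,B(\Phi_{x_k,\rho_k}(x_k),\rho_0/L_0)\to0$, extract a subsequence with $\Phi_{x_k,\rho_k}(x_k)\to y_\infty$; then for $k$ large $B(y_\infty,\tfrac{\rho_0}{2L_0})\subseteq B(\Phi_{x_k,\rho_k}(x_k),\rho_0/L_0)$, forcing $\mathrm{Diam}\,B(y_\infty,\tfrac{\rho_0}{2L_0})=0$, i.e. $y_\infty$ is isolated, contradicting perfectness.

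The main obstacle is obtaining uniform perfectness at \emph{all} scales $\rho<\mathrm{Diam}(X)$ rather than only at scales $\lesssim\rho_0$: the self-similarity of Definition \ref{defin-qss} controls only balls of radius $\le\rho_0$, and for general — in particular disconnected — spaces small-scale uniform perfectness need not propagate to larger scales. The fix is the observation that, since $\rho$ is bounded by $\mathrm{Diam}(X)$, the crude monotonicity of ball-diameters already does the job, at the (unavoidable) cost of letting the final constant depend on $\mathrm{Diam}(X)$ — exactly the quantification stated in (iii). One should also handle the degenerate regimes $\mathrm{Diam}(X)\le\rho_0$ or $\mathrm{Diam}(X)\le\tfrac{\rho_0}{2L_0}$, in which the relevant ball is all of $X$, separately, and tacitly assume $X$ has more than one point so that "diameters bounded below by some $c_0>0$" is not vacuously false.
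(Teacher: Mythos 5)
Your proof is correct. The core mechanism is the same as the paper's — use the map $\Phi$ of Definition \ref{defin-qss} to transport separated sets and diameters between the scale $\rho$ and the fixed scale $\rho_0$, then invoke compactness — but your organization differs in two places, both to your advantage. For (i) you count directly against a finite $\tfrac{\rho_0}{4L_0}$-net, producing an explicit doubling constant, where the paper argues by contradiction with sequences. More substantially, you derive (ii) from (iii) by showing that a perfect q.s.s.\ space has diameters bounded below via a single compactness argument at the one scale $\rho_0/L_0$, whereas the paper proves (ii) independently with a two-case contradiction argument ($\rho_\infty>0$ versus $\rho_\infty=0$) resting on the auxiliary fact that a perfect compact space has $\mathrm{Diam}\,B(x,\rho)\ge d(\rho)>0$ at \emph{every} scale $\rho$. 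Your route avoids that auxiliary lemma and removes a case distinction, at no cost. A further small point in your favour: in the $\Rightarrow$ direction of (iii) you apply uniform perfectness at radius $\tfrac{\rho_0}{2L_0}$ so that the resulting point is guaranteed to lie in the \emph{open} ball $B(\Phi(x),\tfrac{\rho_0}{L_0})$; the paper applies it at radius $\tfrac{\rho_0}{L_0}$, where the witness could a priori sit on the boundary sphere, so your version is the more careful one. The only loose ends are the degenerate regimes ($\mathrm{Diam}(X)\le\tfrac{\rho_0}{2L_0}$, one-point spaces) which you correctly flag but do not write out; they are handled by applying uniform perfectness at radius $\tfrac{1}{2}\mathrm{Diam}(X)$ instead, which only feeds the diameter of $X$ into the constant — exactly the quantification the statement allows.
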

\begin{proof}
	If (i) is not true then for every $n\in \mathbb{N}$ there exist $x_n \in X$ and $\rho_{n} > 0$ such that there is a $\frac{\rho_{n}}{2}$-separated set inside $B(x_{n},\rho_{n})$ of cardinality $\geq n$. Up to pass to a subsequence we can suppose $\lim_{n\to +\infty} \rho_{n} = \rho_\infty \in [0,+\infty)$ and that $x_n$ converges to $x_\infty \in X$. If $\rho_\infty > 0$ then $X$ is not totally bounded, a contradiction. If $\rho_\infty = 0$ we can find $L_0$-biLipschitz maps $\Phi_n\colon \left(B(x_{n}, \rho_{n}), \frac{\rho_0}{\rho_{n}}\cdot d\right) \to X$ such that $\Phi_n(B(x_{n},\rho_{n})) \supseteq B(\Phi_n(x_{n}),\frac{\rho_0}{L_0})$ for every $n$ big enough. Hence there exists a $(\rho_0/2L_0)$-separated set inside $B(\Phi_n(x_{n}), \frac{\rho_0}{L_0})$ with cardinality $\geq n$. Once again this contradicts the compactness of $X$.

	\noindent Now we show (ii). Since $X$ is perfect and compact we have the following property, as in \cite[Lemma 2.2]{Pia11}: for all $\rho > 0$ there exists $d(\rho)>0$ such that Diam$(B(x,\rho)) \geq d(\rho)$ for every $x\in X$.
	Suppose now $X$ is not uniformly perfect: then for all $n\in\mathbb{N}$ there exist $x_n\in X$ and $0<\rho_n\leq \text{Diam}(X)$ such that $\overline{B}(x_n,\rho_n) \setminus B(x_n, \rho_n / n) = \emptyset$. Up to take a subsequence we can suppose that $x_n$ converges to $x_\infty$ and $\rho_n$ converges to $\rho_\infty$. Suppose first $\rho_\infty > 0$. Let $y$ be a point inside $B(x_\infty, \rho_\infty)$. It also belongs to $B(x_n,\rho_n)$ for $n$ big enough, and so it belongs to $B(x_n, \rho_n / n)$. In other words 
	$d(x_\infty, y) \leq d(x_\infty, x_n) + \frac{\rho_n}{n}$ for every $n$ big enough, i.e. $d(x_\infty, y) = 0$ and $x_\infty$ is an isolated point. This shows that $X$ is not perfect, a contradiction.\\
	Suppose now $\rho_\infty = 0$. For all $n$ big enough we take the map $\Phi_n \colon B(x_n,\rho_n) \to X$ given by Definition \ref{defin-qss}. Therefore
	$$B\left(\Phi_n(x_n), \frac{r_0}{L_0}\right) \subseteq \Phi_n(B(x_n,\rho_n)) = \Phi_n\left(B\left(x_n,\frac{\rho_n}{n}\right)\right).$$
	From one side we have Diam$\left(B\left(\Phi(x_n), \frac{r_0}{L_0}\right)\right) \geq d(\frac{r_0}{L_0}) > 0$ for every $n$. On the other hand 
	$$\text{Diam}\left(\Phi_n\left(B\left(x_n,\frac{\rho_n}{n}\right)\right)\right) \leq L_0\cdot \frac{\rho_0}{\rho_n} \cdot \frac{2\rho_n}{n} \underset{n \to + \infty}{\longrightarrow} 0,$$
	which is a contradiction.

	\noindent Finally we prove (iii). Let us suppose $X$ has diameters bounded below by $c_0>0$. We fix $x\in X$ and $0<\rho\leq \rho_0$. We take the map $\Phi\colon B(x,\rho) \to X$ given by the definition of quasi-selfsimilarity. Since $\Phi(B(x,\rho))$ contains a set with diameter $\geq c_0$, then there exists $y\in B(x,\rho)$ such that $d(\Phi(x), \Phi(y)) \geq \frac{c_0}{2}$. Therefore $d(x,y) \geq \frac{1}{L_0}\cdot \frac{\rho}{\rho_0} \cdot \frac{c_0}{2} = a(L_0,\rho_0,c_0) \cdot \rho$, with $0<a(L_0,\rho_0,c_0) =: a<1$. If $\rho$ is bigger than $\rho_0$ then we apply what said above to $\rho_0$ finding $B(x,\rho_0) \setminus B(x,a\cdot\rho_0) \neq \emptyset$, so $B(x,\rho) \setminus B(x,\frac{a\cdot \rho_0}{\rho}\cdot \rho) \neq \emptyset$. Since $X$ is compact we have $\frac{a\cdot \rho_0}{\rho} \geq \frac{a\cdot \rho_0}{\text{Diam}(X)} =: a_0 > 0$, showing that $X$ is $a_0$-uniformly perfect and $a_0$ depends only on $L_0,\rho_0,c_0$ and the diameter of $X$.\\
	Viceversa if $X$ is $a_0$-uniformly perfect then $\overline{B}(\Phi(x), \frac{\rho_0}{L_0}) \setminus B(\Phi(x), a_0\cdot\frac{\rho_0}{L_0}) \neq \emptyset$ for all $x\in X$ and $\Phi$ as in Definition \ref{defin-qss}. Therefore Diam$(B(\Phi(x), \frac{\rho_0}{L_0})) \geq a_0\cdot\frac{\rho_0}{L_0} =: c_0$.
\end{proof}
We remark that the diameter bounded below condition is part of the definition of quasi-selfsimilar spaces in \cite{Kle06} and in \cite{Pia11}. When an upper bound on the diameter of the metric space is fixed this condition is equivalent to bounded uniform perfectness of the metric space by Proposition \ref{prop-perfect-implies-uniform}. For instance in the context of Theorem \ref{theorem-main-intro} there is a uniform upper bound on the diameter of the spaces $X_n$, so the spaces $X_n$ have all diameter bounded below by some $c_0 > 0$ if and only if they are all $a_0$-uniformly perfect for some $0 < a_0 < 1$.

%In Section \ref{sec-Gromov-application} we will deal also with equivariant pointed Gromov-Hausdorff convergence.

\section{Combinatorial modulus}
It is known that the conformal dimension of a metric space is closely related to the combinatorial modulus, see for instance \cite{BK13}, \cite{Pia12}, \cite{MT10} and the references therein. In this section we recall the definition of combinatorial modulus and we prove some technical lemma.
\vspace{1mm}

\noindent From now on we fix a $D$-doubling metric space $(X,d)$. For every $k\in \mathbb{N}$ we choose a $10^{-k}$-net $X_k$ of $X$.
To simplify the notation, given a real number $\lambda > 0$ and $k\in \mathbb{N}$, we will denote by $B_{\lambda,k}(x)$ the open ball of center $x$ and radius $\lambda \cdot 10^{-k}$, namely $B(x,\lambda\cdot 10^{-k})$. The same convention holds for closed balls.
\vspace{1mm}

% and we denote by $\mathcal{U}_k = \lbrace B_k(y) := B(y,10^{-k}) \text{ s.t. } y\in X_k\rbrace$ the associated covering of $X$.\\
\noindent A $(\lambda,k)$-path is a finite collection $\gamma = \lbrace q_j \rbrace_{j=0}^M$ of elements of $X_k$ satisfying $\overline{B}_{\lambda,k}(q_{j})\cap \overline{B}_{\lambda,k}(q_{j+1}) \neq \emptyset$ for all $j=0,\ldots,M-1$. The points $q_0$ and $q_M$ are called respectively the starting and the ending point of the path.
\vspace{1mm}

\noindent Given two subsets $E,F\subseteq X$ we denote by $P_{\lambda,k}(E,F)$ the set of $(\lambda,k)$-paths with starting point in $E$ and ending point in $F$.
We denote by $\mathcal{A}_{\lambda,k}(E,F)$ the set of admissible functions, i.e. functions $f\colon X_k \to [0,+\infty)$ such that $\sum_{i=0}^M f(q_i) \geq 1$ for every $\lbrace q_i\rbrace_{i=0}^M \in P_{\lambda,k}(E,F)$.
\vspace{1mm}

\noindent Given a real number $p\geq 0$ we define
$$p\text{-Mod}_{\lambda,k}(E,F) = \inf_{f\in \mathcal{A}_{\lambda,k}(E,F)}\sum_{q\in X_k} f(q)^p,$$
and we call it the $p$-modulus of the couple $(E,F)$ at level $(\lambda,k)$. The infimum is actually realized: any admissible function realizing the minimum is said optimal.
If there are no $(\lambda,k)$-paths joining $E$ and $F$ we set $p\text{-Mod}_{\lambda,k}(E,F) = 0$.

\begin{lemma}
	\label{lemma-base}
	If $E'\subseteq E$ and $F'\subseteq F$ then $p\textup{-Mod}_{\lambda,k}(E',F') \leq p\textup{-Mod}_{\lambda,k}(E,F)$.
\end{lemma}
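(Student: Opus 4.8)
The plan is to observe that shrinking the endpoint sets can only enlarge the class of admissible functions, and then to conclude by monotonicity of the infimum. First I would note that any $(\lambda,k)$-path with starting point in $E'$ and ending point in $F'$ is, in particular, a $(\lambda,k)$-path with starting point in $E$ and ending point in $F$, since $E'\subseteq E$ and $F'\subseteq F$. Hence $P_{\lambda,k}(E',F') \subseteq P_{\lambda,k}(E,F)$.

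Next I would use this inclusion to compare the admissible classes. A function $f\colon X_k\to[0,+\infty)$ lies in $\mathcal{A}_{\lambda,k}(E,F)$ precisely when $\sum_{i=0}^M f(q_i)\geq 1$ for every path in $P_{\lambda,k}(E,F)$; since $P_{\lambda,k}(E',F')$ is a subcollection of $P_{\lambda,k}(E,F)$, this requirement automatically forces the same inequality for every path in $P_{\lambda,k}(E',F')$. Therefore $\mathcal{A}_{\lambda,k}(E,F)\subseteq \mathcal{A}_{\lambda,k}(E',F')$.

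Finally I would take the infimum of $\sum_{q\in X_k} f(q)^p$ over these two nested families: the infimum over the larger family $\mathcal{A}_{\lambda,k}(E',F')$ is at most the infimum over the smaller family $\mathcal{A}_{\lambda,k}(E,F)$, which is exactly $p\textup{-Mod}_{\lambda,k}(E',F') \leq p\textup{-Mod}_{\lambda,k}(E,F)$. The only point requiring a word of care is the degenerate convention: if $P_{\lambda,k}(E',F')=\emptyset$ then $p\textup{-Mod}_{\lambda,k}(E',F')=0$, which is $\leq p\textup{-Mod}_{\lambda,k}(E,F)$ because the $p$-modulus is nonnegative; and if $P_{\lambda,k}(E,F)=\emptyset$ then by the inclusion above $P_{\lambda,k}(E',F')=\emptyset$ as well, so both sides vanish. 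There is no real obstacle here — the statement is pure monotonicity of combinatorial modulus in its endpoint data.
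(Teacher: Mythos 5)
Your argument is correct and is essentially the same as the paper's: path inclusion $P_{\lambda,k}(E',F')\subseteq P_{\lambda,k}(E,F)$, hence inclusion of admissible classes $\mathcal{A}_{\lambda,k}(E,F)\subseteq \mathcal{A}_{\lambda,k}(E',F')$, and monotonicity of the infimum, with the empty-path case handled by the convention. Nothing further is needed.
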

\begin{proof}
	If $P_{\lambda,k}(E',F') = \emptyset$ then the result is trivial by definition. Otherwise every path in $P_{\lambda,k}(E',F')$ belongs to $P_{\lambda,k}(E,F)$. This implies that $\mathcal{A}_{\lambda,k}(E,F)\subseteq \mathcal{A}_{\lambda,k}(E',F')$, and the result follows from the definition.
\end{proof}

\noindent Let $1\leq L_1<L_2$ be two real numbers. For every $i\in \mathbb{N}$ and every point $y\in X_i$ we set
$$p\text{-Mod}_{\lambda,k}^{L_1,L_2}(y) := p\text{-Mod}_{\lambda,i+k}(\overline{B}_{L_1,i}(y),X\setminus B_{L_2,i}(y)).$$
We remark that this is a modulus at level $(\lambda,i+k)$. Finally we define
$$p\text{-Mod}_{\lambda,k}^{L_1,L_2}(X) = \sup_{i\in\mathbb{N}}\sup_{y\in X_i}p\text{-Mod}_{\lambda,k}^{L_1,L_2}(y).$$
%and
%$$p\text{-Mod}_\lambda^{L_1,L_2}(X)=\liminf_{k\to+\infty} p\text{-Mod}_{\lambda,k}^{L_1,L_2}(X).$$
We want to control how this quantity changes when $L_1,L_2$ and $\lambda$ change. We recall that $D$ denotes the doubling constant of $X$.
\begin{lemma}(cp. \cite{Pia11}, Lemma 4.4)
	\label{lemma-rescaling}
	Let $k,\lambda,p$ be fixed quantities as above.
	Let $1\leq L_1' \leq L_1 < L_2 \leq L_2'$. Then there exist $\ell\in\mathbb{N}$ and $C>0$, depending only on $L_1, L_1', L_2, L_2'$ and $D$, such that 
	$$p\textup{-Mod}_{\lambda,k}^{L_1',L_2'}(X) \leq p\textup{-Mod}_{\lambda,k}^{L_1,L_2}(X)$$
	and
	$$p\textup{-Mod}_{\lambda,k + \ell}^{L_1,L_2}(X) \leq C\cdot p\textup{-Mod}_{\lambda,k}^{L_1',L_2'}(X).$$
\end{lemma}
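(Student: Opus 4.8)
The first inequality is the easy half. Enlarging $L_1$ to $L_1'$ shrinks the source set $\overline{B}_{L_1',i}(y) \subseteq \overline{B}_{L_1,i}(y)$ only when $L_1' \le L_1$... wait, here $L_1' \le L_1$, so $\overline{B}_{L_1',i}(y) \subseteq \overline{B}_{L_1,i}(y)$; similarly $X \setminus B_{L_2',i}(y) \subseteq X \setminus B_{L_2,i}(y)$ since $L_2 \le L_2'$. Hence by Lemma \ref{lemma-base}, $p\textup{-Mod}_{\lambda,i+k}(\overline{B}_{L_1',i}(y), X\setminus B_{L_2',i}(y)) \le p\textup{-Mod}_{\lambda,i+k}(\overline{B}_{L_1,i}(y), X\setminus B_{L_2,i}(y))$ for every $i$ and $y$, and taking suprema over $i \in \mathbb{N}$ and $y \in X_i$ gives $p\textup{-Mod}_{\lambda,k}^{L_1',L_2'}(X) \le p\textup{-Mod}_{\lambda,k}^{L_1,L_2}(X)$.

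For the second inequality, the idea is a change of scale: a modulus at level $(\lambda, k+\ell)$ of an annulus around a point $y \in X_i$ should be comparable to a modulus at level $(\lambda, k)$ of an annulus around a point at scale $i+\ell$ — the shift by $\ell$ in the net-index converts into a shift by $\ell$ in the ``inner radius'' index. Concretely, fix $i$ and $y \in X_i$. Choose $\ell$ large enough (depending on $L_1, L_1', L_2, L_2'$) so that $10^{-\ell} L_2 \le$ something comparable to $L_1', L_2'$; more precisely, I want to find a point $y' \in X_{i+\ell}$ near $y$ and compare the annulus $(\overline{B}_{L_1,i}(y), X \setminus B_{L_2,i}(y))$ at level $i+\ell+k$ with the annulus $(\overline{B}_{L_1',i+\ell}(y'), X \setminus B_{L_2',i+\ell}(y'))$ at level $(i+\ell)+k$. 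The first set $\overline{B}_{L_1,i}(y)$ has radius $L_1 10^{-i} = (L_1 10^{\ell}) 10^{-(i+\ell)}$, which is much larger than $L_1' 10^{-(i+\ell)}$, so I cannot directly use Lemma \ref{lemma-base}; instead I must cover $\overline{B}_{L_1,i}(y)$ by a bounded number $N = N(L_1, L_1', D)$ of balls of radius $L_1' 10^{-(i+\ell)}$ (possible by the doubling property), and similarly handle the outer boundary. Then a $(\lambda, i+\ell+k)$-path from $\overline{B}_{L_1,i}(y)$ to $X \setminus B_{L_2,i}(y)$ must, after truncation, contain a subpath from one of these small inner balls to the complement of $B_{L_2',i+\ell}(y')$ (here I pick $\ell$ so $L_2 10^{-i} \ge L_2' 10^{-(i+\ell)}$, automatic once $10^\ell \ge L_2'/L_2$); summing the at most $N$ optimal admissible functions for these smaller annuli and using subadditivity of $t \mapsto t^p$ (for $0 \le p \le 1$) or the elementary inequality $(\sum a_j)^p \le N^{p-1}\sum a_j^p$ (for $p \ge 1$) produces an admissible function for the large annulus with $p$-mass bounded by $C \cdot p\textup{-Mod}_{\lambda,k}^{L_1',L_2'}(y')$, where $C = C(N,p)$; but since $p$ is fixed we absorb the $p$-dependence and write $C = C(L_1,L_1',L_2,L_2',D)$. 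Taking suprema finishes it.

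The main obstacle is bookkeeping the geometry of the covering: one must ensure that (a) $\ell$ is chosen so that the rescaled inner radius $L_1 10^\ell$ at level $i+\ell$ and outer radius $L_2 10^\ell$ stay in the right relation to $L_1'$ and $L_2'$ so that Lemma \ref{lemma-base}-type containments or their covering substitutes actually apply, and (b) every $(\lambda, i+\ell+k)$-path realizing the large modulus genuinely restricts to a $(\lambda, i+\ell+k)$-path between one small inner ball and the outer complement — this requires the small balls to be chosen with centers in $X_{i+\ell}$ and needs a mild enlargement of the radius (absorbed into the constants) so that path-connectivity at the net level is preserved. I expect this is exactly the computation carried out in \cite[Lemma 4.4]{Pia11}, so the proof will largely mirror it, with constants tracked explicitly in terms of $D$.
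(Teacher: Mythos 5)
Your plan follows the same route as the paper: the easy half via Lemma \ref{lemma-base}, and for the hard half a shift of scale by $\ell$, a doubling-bounded family of net points of $X_{i+\ell}$ near the inner ball, the observation that any path crossing the big annulus must cross one of the small standard annuli, and a combination of the optimal admissible functions for those. Two points in your sketch, however, do not work as written. First, your provisional condition on $\ell$, namely $10^{\ell}\geq L_2'/L_2$, is too weak, and your inner and outer sets have different centers ($z_j$ versus $y'$), whereas the quantity $p\textup{-Mod}_{\lambda,k}^{L_1',L_2'}(z)$ requires the inner ball $\overline{B}_{L_1',i+\ell}(z)$ and the outer complement $X\setminus B_{L_2',i+\ell}(z)$ to be centered at the \emph{same} net point $z\in X_{i+\ell}$. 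Since such a $z$ (chosen within $10^{-i-\ell}$ of the starting point $q_0$) can be displaced from $y$ by as much as $L_1\cdot 10^{-i}+L_1'\cdot 10^{-i-\ell}$, the endpoint of the path, which is only known to satisfy $d(y,q_M)\geq L_2\cdot 10^{-i}$, lies outside $B_{L_2',i+\ell}(z)$ only if $10^{-\ell}\leq \frac{L_2-L_1}{L_1'+L_2'}$; this is the paper's choice, and with e.g. $L_1=3$, $L_2=4$, $L_1'=1$, $L_2'=100$ your condition gives $\ell=2$, for which the required exit from $B_{L_2',i+\ell}(z)$ genuinely fails. (No truncation to a subpath is needed once $\ell$ is chosen correctly: the whole path already joins the small annulus.)

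Second, combining the optimal functions by summation costs a factor $N^{\max(p-1,0)}$, so your constant depends on $p$, while the lemma asserts $C=C(L_1,L_1',L_2,L_2',D)$ only; this weaker form would still suffice for the paper's later applications (where $p$ is fixed anyway), but it does not prove the statement as claimed. The paper avoids it by taking the pointwise \emph{maximum} $f(q)=\max_{z} f_z(q)$ over the boundedly many $z\in X_{i+\ell}(y)$: admissibility is preserved because each crossing path crosses one small annulus entirely, and $\sum_q(\max_z f_z(q))^p\leq\sum_z\sum_q f_z(q)^p$ gives $C$ equal to the doubling bound on $\#X_{i+\ell}(y)$, independent of $p$. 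Both defects are local and fixable, but as written the key quantitative step and the claimed uniformity in $p$ are not established.
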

\begin{proof}
	For every $y\in X_i$, $i\in \mathbb{N}$, we have $\overline{B}_{L_1',i}(y) \subseteq \overline{B}_{L_1,i}(y)$ and $X\setminus B_{L_2',i}(y) \subseteq X\setminus B_{L_2,i}(y)$, so the first inequality follows by Lemma \ref{lemma-base}.\\
	In order to prove the second inequality we define $\ell$ as the minimum integer satisfying $10^{-\ell}\leq \frac{L_2 - L_1}{L_2' + L_1'}$. We fix $y\in X_i$ for some $i\in \mathbb{N}$ and we consider the set $X_{i+\ell}(y) = \lbrace z\in X_{i+\ell} \text{ s.t. } B_{L_1',i+\ell}(z) \cap \overline{B}_{L_1,i}(y) \neq \emptyset \rbrace$. 
	We fix any $(\lambda,i+\ell+k)$-path $\gamma = \lbrace q_j \rbrace_{j=0}^M$ joining $\overline{B}_{L_1,i}(y)$ and $X\setminus B_{L_2,i}(y)$. This means in particular that $d(y,q_0) \leq L_1\cdot 10^{-i}$ and $d(y,q_M)\geq L_2\cdot 10^{-i}$. We can find $z\in X_{i+\ell}$ such that $d(z,q_0)\leq 10^{-i-\ell}$, so by definition $z\in X_{i+\ell}(y)$. We claim that the $(\lambda,i+\ell+k)$-path $\gamma$ joins $\overline{B}_{L_1',i+\ell}(z)$ and $X\setminus B_{L_2',i+\ell}(z)$. Indeed we know that $d(z, q_0) \leq 10^{-i-\ell} \leq L_1' \cdot 10^{-i-\ell}$. Moreover $d(z,y)\leq L_1\cdot 10^{-i} + L_1'\cdot 10^{-i-\ell}$. Therefore
	$$d(z,q_N) \geq L_2\cdot 10^{-i} - L_1\cdot 10^{-i} - L_1'\cdot 10^{-i-\ell} \geq L_2'\cdot 10^{-i-\ell}$$
	by the choice of $\ell$.
	This means that any path $\gamma\in P_{\lambda,i+\ell+k}(\overline{B}_{L_1,i}(y), X\setminus B_{L_2,i}(y))$ belongs to $P_{\lambda, i+\ell+k}(\overline{B}_{L_1',i+\ell}(z), X\setminus B_{L_2',i+\ell}(z))$
	for some $z\in X_{i+\ell}(y)$.\\
	For each $z\in X_{i+\ell}(y)$ we take optimal functions $f_z \in \mathcal{A}_{\lambda,i+\ell+k}(\overline{B}_{L_1',i+\ell}(z), X\setminus B_{L_2',i+\ell}(z))$ and we define the function $f\colon X_{i+\ell+k} \to [0,+\infty)$ as 
	$$f(q)=\max_{z\in  X_{i+\ell}(y)}f_z(q).$$
	We claim $f\in \mathcal{A}_{\lambda, i+\ell+k}(\overline{B}_{L_1,i}(y), X\setminus B_{L_2,i}(y)).$
	Indeed every path $\lbrace q_j \rbrace_{j=0}^M \in P_{\lambda, i+\ell+k}(\overline{B}_{L_1,i}(y), X\setminus B_{L_2,i}(y))$ belongs to $P_{\lambda, i+\ell+k}(B_{L_1',i+\ell}(z), X\setminus B_{L_2',i+\ell}(z))$ for some $z\in X_{i+\ell}(y)$, therefore 
	$$\sum_{j=0}^Mf(q_j) \geq \sum_{j=0}^Mf_z(q_j) \geq 1.$$
	Finally we have:
	\begin{equation*}
		\begin{aligned}
			\sum_{q\in X_{i+\ell+k}}f(q)^p &= \sum_{q\in X_{i+\ell+k}}\max_{z\in  X_{i+\ell}(y)}f_z(q)^p \leq \sum_{z\in  X_{i+\ell}(y)} \sum_{q\in X_{i+\ell+k}}f_z(q)^p \\
			&= \sum_{z\in  X_{i+\ell}(y)} p\text{-Mod}^{L_1',L_2'}_{\lambda,k}(z) \leq C\cdot p\text{-Mod}^{L_1',L_2'}_{\lambda,k}(X),
		\end{aligned}
	\end{equation*}
	where $C$ is a constant depending only on the doubling constant $D$, on $\ell$ and on $L_1'$. This shows that
	$$p\text{-Mod}^{L_1,L_2}_{\lambda, k+\ell}(y) \leq C\cdot p\text{-Mod}^{L_1',L_2'}_{\lambda,k}(X).$$
	Since this is true for every $y\in X_i$ and for every $i$ we get 
	$$p\text{-Mod}^{L_1,L_2}_{\lambda,k+\ell}(X) \leq C\cdot p\text{-Mod}^{L_1',L_2'}_{\lambda,k}(X).$$
\end{proof}

\begin{lemma}
	\label{lemma-lambda-rescaling}
	Let $k\in\mathbb{N}$, $p\geq 0$, $1\leq L_1 < L_2$ and $2<\lambda \leq \lambda'$. Then there exist $\ell\in \mathbb{N}$ and $C>0$ depending only on $\lambda,\lambda',D$ such that
	$$p\textup{-Mod}^{L_1,L_2}_{\lambda,k}(X) \leq p\textup{-Mod}^{L_1,L_2}_{\lambda',k}(X)$$
	and
	$$p\textup{-Mod}^{L_1,L_2}_{\lambda',k + \ell}(X) \leq C\cdot p\textup{-Mod}^{L_1,L_2}_{\lambda,k}(X)$$
	for all $k> k_0 = \log_{10}\left(\frac{2}{L_2-L_1}\right)$.
\end{lemma}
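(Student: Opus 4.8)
The plan is to treat the two inequalities separately. The first is immediate from an inclusion of path families, and the second I would prove by transplanting an optimal admissible function from parameter $\lambda$ and level $k$ to parameter $\lambda'$ and level $k+\ell$ along a nearest-point projection, controlling the cost of the transplant by the doubling property. For the first inequality: if $2<\lambda\le\lambda'$ then $\overline{B}_{\lambda,m}(q)\subseteq\overline{B}_{\lambda',m}(q)$ for every $q\in X_m$ and every $m$, so every $(\lambda,m)$-path is also a $(\lambda',m)$-path. Hence for all $i\in\mathbb{N}$, $y\in X_i$ and $E=\overline{B}_{L_1,i}(y)$, $F=X\setminus B_{L_2,i}(y)$ one has $P_{\lambda,i+k}(E,F)\subseteq P_{\lambda',i+k}(E,F)$, so $\mathcal{A}_{\lambda',i+k}(E,F)\subseteq\mathcal{A}_{\lambda,i+k}(E,F)$, and taking the infimum of $\sum_q f(q)^p$ over the larger family can only decrease it; thus $p\text{-Mod}_{\lambda,i+k}(E,F)\le p\text{-Mod}_{\lambda',i+k}(E,F)$, and a supremum over $y$ and $i$ gives $p\text{-Mod}^{L_1,L_2}_{\lambda,k}(X)\le p\text{-Mod}^{L_1,L_2}_{\lambda',k}(X)$. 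This part uses neither $\lambda>2$ nor $k>k_0$.

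For the second inequality, fix $i$ and $y\in X_i$, write $n:=i+k$, $E:=\overline{B}_{L_1,i}(y)$, $F:=X\setminus B_{L_2,i}(y)$, and (we may assume the nets $X_m$ are $10^{-m}$-separated) choose for each $q\in X_{n+\ell}$ a point $\pi(q)\in X_n$ with $d(q,\pi(q))\le 10^{-n}$. Given a $(\lambda',n+\ell)$-path $\gamma=\{q_j\}_{j=0}^M$ and consecutive vertices $q_j,q_{j+1}$, pick $z\in\overline{B}_{\lambda',n+\ell}(q_j)\cap\overline{B}_{\lambda',n+\ell}(q_{j+1})$; then
$$d(\pi q_j,z)\le d(\pi q_j,q_j)+d(q_j,z)\le 10^{-n}+\lambda'10^{-(n+\ell)}=\bigl(1+\lambda'10^{-\ell}\bigr)10^{-n}$$
and likewise for $\pi q_{j+1}$, so if $\ell=\ell(\lambda,\lambda')\in\mathbb{N}$ satisfies $\lambda'10^{-\ell}\le\lambda-1$ (possible since $\lambda>2>1$) then $z\in\overline{B}_{\lambda,n}(\pi q_j)\cap\overline{B}_{\lambda,n}(\pi q_{j+1})$. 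Hence $\{\pi q_j\}_{j=0}^M$ is a $(\lambda,n)$-path, and $d(y,\pi q_0)\le(L_1+10^{-k})10^{-i}$, $d(y,\pi q_M)\ge(L_2-10^{-k})10^{-i}$. Write $\widetilde{L}_1:=L_1+10^{-k}$, $\widetilde{L}_2:=L_2-10^{-k}$; then $\widetilde{L}_1<\widetilde{L}_2$ precisely because $k>k_0$ forces $2\cdot10^{-k}<L_2-L_1$, and $\{\pi q_j\}$ lies in $P_{\lambda,n}(\overline{B}_{\widetilde{L}_1,i}(y),X\setminus B_{\widetilde{L}_2,i}(y))$. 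Let $f$ be optimal for $p\text{-Mod}_{\lambda,n}(\overline{B}_{\widetilde{L}_1,i}(y),X\setminus B_{\widetilde{L}_2,i}(y))=p\text{-Mod}^{\widetilde{L}_1,\widetilde{L}_2}_{\lambda,k}(y)$ and set $g:=f\circ\pi$ on $X_{n+\ell}$. Admissibility of $f$ gives $\sum_{j=0}^M g(q_j)=\sum_{j=0}^M f(\pi q_j)\ge1$, so $g\in\mathcal{A}_{\lambda',n+\ell}(E,F)$, and since each fibre $\pi^{-1}(q')\subseteq\overline{B}(q',10^{-n})\cap X_{n+\ell}$ has cardinality at most $C_0=C_0(\ell,D)$ by iterating the doubling property $O(\ell)$ times,
$$\sum_{q\in X_{n+\ell}}g(q)^p=\sum_{q'\in X_n}f(q')^p\,\#\pi^{-1}(q')\le C_0\cdot p\text{-Mod}^{\widetilde{L}_1,\widetilde{L}_2}_{\lambda,k}(y).$$
A supremum over $y$ and $i$ gives $p\text{-Mod}^{L_1,L_2}_{\lambda',k+\ell}(X)\le C_0\cdot p\text{-Mod}^{L_1+10^{-k},L_2-10^{-k}}_{\lambda,k}(X)$.

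The step I expect to be the main obstacle is passing from the shrunk radii $L_1+10^{-k},L_2-10^{-k}$ back to the prescribed $L_1,L_2$. The projection onto the coarse net $X_n$ unavoidably displaces the endpoints of a path by up to one coarse mesh $10^{-n}$, so one is forced onto a strictly narrower annulus, and by Lemma \ref{lemma-base} the modulus of a narrower annulus a priori dominates, rather than being dominated by, that of $[L_1,L_2]$. Since $[L_1+10^{-k},L_2-10^{-k}]\subseteq[L_1,L_2]$ (again because $k>k_0$), the second inequality of Lemma \ref{lemma-rescaling} does control $p\text{-Mod}^{L_1+10^{-k},L_2-10^{-k}}_{\lambda,k}(X)$ by a constant multiple of $p\text{-Mod}^{L_1,L_2}_{\lambda,\cdot}(X)$ at a coarser level; combining this with the estimate above, relabelling the level and enlarging $\ell$ and $C$, one reaches $p\text{-Mod}^{L_1,L_2}_{\lambda',k+\ell}(X)\le C\cdot p\text{-Mod}^{L_1,L_2}_{\lambda,k}(X)$. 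Arranging this reconciliation so that $\ell$ and $C$ in the end depend only on $\lambda,\lambda',D$ is the delicate part, and the hypothesis $k>k_0$ enters exactly here — to keep the corrupted annulus nondegenerate and the correction under control — and only here; the choice of $\ell$ from $(\lambda,\lambda')$ and the doubling count $C_0$ are routine.
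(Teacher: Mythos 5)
Your proposal is correct and follows essentially the paper's own argument: the inclusion of path families gives the first inequality, and for the second the paper likewise projects a $(\lambda',i+k+\ell)$-path onto the coarser net $X_{i+k}$ to get a $(\lambda,i+k)$-path joining the shrunken annulus $[L_1+10^{-k},\,L_2-10^{-k}]$, transplants an optimal admissible function with a doubling fibre count (using a pointwise maximum over nearby coarse net points instead of your fixed projection $\pi$, a purely cosmetic difference), and then invokes Lemma \ref{lemma-rescaling} to return to $[L_1,L_2]$. The reconciliation you flag as delicate is resolved exactly along the lines you indicate: since $k>k_0$, the easy inequality first replaces the $k$-dependent shrunken annulus by the fixed one with $10^{-k_0}$ in place of $10^{-k}$, and a single application of the hard inequality of Lemma \ref{lemma-rescaling} with these fixed parameters then yields $\ell$ and $C$ independent of $k$.
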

\begin{proof}
	For every $y\in X_i$, $i\in \mathbb{N}$, we have $$P_{\lambda,k}(\overline{B}_{L_1,i}(y), X\setminus B_{L_2,i}(y)) \subseteq P_{\lambda',k}(\overline{B}_{L_1,i}(y), X\setminus B_{L_2,i}(y)).$$
	Therefore arguing as in the proof of Lemma \ref{lemma-base} we get
	$$p\text{-Mod}_{\lambda,k}^{L_1,L_2}(y) \leq p\text{-Mod}_{\lambda',k}^{L_1,L_2}(y).$$
	Taking the supremum on $i\in \mathbb{N}$ and $y\in X_i$ we obtain the first inequality.\\
	In order to show the second inequality we define $\ell$ as the smallest integer such that $\lambda'\cdot 10^{-\ell} \leq \frac{\lambda}{2} -1$. It is well defined since $\lambda > 2$. We restrict the attention to the integers $k$ bigger than $k_0$, so that $10^{-k} < \frac{L_2-L_1}{2}$.\\
	We fix $y\in X_i$, $i\in \mathbb{N}$, and a $(\lambda', i+k + \ell)$-path $\gamma = \lbrace q_j \rbrace_{j=0}^M$ joining $\overline{B}_{L_1,i}(y)$ to $X\setminus B_{L_2,i}(y)$. For every $j=0,\ldots,M$ we take a point $\tilde{q}_j \in X_{i+k}$ such that $d(q_j,\tilde{q}_j) \leq 10^{-i-k}$. We claim $\tilde{\gamma} = \lbrace \tilde{q}_j\rbrace_{j=0}^M$ is a $(\lambda, i+k)$-path joining $\overline{B}_{L_1',i}(y)$ to $X\setminus B_{L_2',i}(y)$, where $L_1' = L_1 + 10^{-k}$ and $L_2' = L_2 - 10^{-k}$. Indeed we have:
	$$d(y,\tilde{q}_0) \leq d(y,q_0) + d(q_0,\tilde{q}_0) \leq L_1\cdot 10^{-i} + 10^{-i-k} = L_1'\cdot 10^{-i},$$
	$$d(y,\tilde{q}_M) \geq d(y,q_M) - d(q_M,\tilde{q}_M) \geq L_2\cdot 10^{-i} - 10^{-i-k} = L_2'\cdot 10^{-i}$$
	and
	\begin{equation*}
		\begin{aligned}
			d(\tilde{q}_j, \tilde{q}_{j+1}) &\leq d(\tilde{q}_j,q_j) + d(q_j,q_{j+1}) + d(q_{j+1}, \tilde{q}_{j+1}) \\
			&\leq 2\cdot 10^{-i-k} + 2\lambda'\cdot 10^{-i-k-\ell} \\
			&\leq 2\cdot 10^{-i-k} + 2\left(\frac{\lambda}{2} -1\right)\cdot 10^{-i-k} \\
			& = \lambda \cdot 10^{-i-k} 
		\end{aligned}
	\end{equation*}
	for every $j=0,\ldots,M-1$.	Observe that the condition on $k$ implies $L_1'<L_2'$.
	We are ready to compare the combinatorial moduli. We take an optimal function $\tilde{f} \in \mathcal{A}_{\lambda,i+k}(\overline{B}_{L_1',i}(y), X\setminus B_{L_2',i}(y))$ and we define the function $f\colon X_{i+k+\ell} \to [0,+\infty)$ by 
	$$f(q) := \max\lbrace \tilde{f}(\tilde{q}) \text{ s.t. } \tilde{q}\in X_{i+k} \text{ and } d(q,\tilde{q}) \leq 10^{-i-k} \rbrace.$$
	First of all we show that $f\in \mathcal{A}_{\lambda',i+k+\ell}(\overline{B}_{L_1,i}(y), X\setminus B_{L_2,i}(y))$. Indeed we have seen that given any $(\lambda',i+k+\ell)$-path $\lbrace q_j \rbrace_{j=0}^M$ joining $\overline{B}_{L_1,i}(y)$ to $X\setminus B_{L_2,i}(y)$ there is an associated $(\lambda, i+k)$-path $\lbrace \tilde{q}_j \rbrace_{j=0}^M$ joining $\overline{B}_{L_1',i}(y)$ to $X\setminus B_{L_2',i}(y)$ such that $d(q_j,\tilde{q}_j) \leq 10^{-i-k}$ for every $j=0,\ldots,M$. Therefore by definition of $f$ we have
	$$\sum_{j=0}^M f(q_j) \geq \sum_{j=0}^M\tilde{f}(\tilde{q}_j) \geq 1.$$
	Finally we observe that
	\begin{equation*}
		\begin{aligned}
			p\text{-Mod}^{L_1,L_2}_{\lambda',k+\ell}(y) &\leq \sum_{q\in X_{i+k+\ell}}f^p(q) \leq C' \cdot \sum_{\tilde{q}\in X_{i+k}}\tilde{f}^p(\tilde{q}) \\
			&= C'\cdot p\text{-Mod}^{L_1',L_2'}_{\lambda,k}(y) \leq C' \cdot p\text{-Mod}^{L_1',L_2'}_{\lambda,k}(X),
		\end{aligned}
	\end{equation*}
	where $C'$ is a constant depending only on $D$. Since this is true for every $y \in X_i$ and for every $i\in \mathbb{N}$ we conclude that 
	$$p\text{-Mod}^{L_1,L_2}_{\lambda',k+\ell}(X) \leq C' \cdot p\text{-Mod}^{L_1',L_2'}_{\lambda,k}(X).$$
	This inequality is true for all $k\geq k_0$. Choosing $L_1'' = L_1 + 10^{-k_0}$ and $L_2'' = L_2 - 10^{-k_0}$ one concludes, using the easy inequality in Lemma \ref{lemma-rescaling}, that
	$$p\text{-Mod}^{L_1,L_2}_{\lambda',k+\ell}(X) \leq C' \cdot p\text{-Mod}^{L_1'',L_2''}_{\lambda,k}(X)$$
	for all $k\geq k_0$.
	An application of the non-trivial inequality of Lemma \ref{lemma-rescaling} concludes the proof.
\end{proof}
In order to normalize the notation from now on we choose for technical reasons $\lambda = 10$, $L_1=3$, $L_2 = 4$ and we set
$$p\text{-Mod}_k(y) := p\text{-Mod}_{10,i+k}(\overline{B}_{3,i}(y),X\setminus B_{4,i}(y))$$
for every $i\in \mathbb{N}$ and every point $y\in X_i$.
In the same way we put
$$p\text{-Mod}_k(X) = \sup_{i\in\mathbb{N}}\sup_{y\in X_i}p\text{-Mod}_k(y).$$

\section{Combinatorial modulus on quasi-selfsimilar spaces}
In this section we consider the class of quasi-selfsimilar metric spaces as defined in Definition \ref{defin-qss}. On these spaces the computation of the combinatorial moduli is easier. Before that we need an easy result.
\begin{lemma}
	\label{lemma-qss-propagation}
	If $X$ is $(L_0,\rho_0)$-q.s.s. then it is $(L_0,\rho_1)$-q.s.s. for every $0<\rho_1\leq \rho_0$.
\end{lemma}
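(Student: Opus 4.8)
The plan is to deduce $(L_0,\rho_1)$-quasi-selfsimilarity from the given $(L_0,\rho_0)$-quasi-selfsimilarity by applying the hypothesis not to the ball one is handed but to a suitably \emph{enlarged} one. So fix an open ball $B(x,\rho)$ with $0<\rho\le\rho_1$ and set $\rho':=\frac{\rho_0}{\rho_1}\,\rho$. Since $\rho\le\rho_1\le\rho_0$ one has $0<\rho'\le\rho_0$, so Definition~\ref{defin-qss} applies to $B(x,\rho')$ and provides an $L_0$-biLipschitz map $\Phi\colon\left(B(x,\rho'),\,\tfrac{\rho_0}{\rho'}\cdot d\right)\to X$ with $\Phi(B(x,\rho'))\supseteq B(\Phi(x),\tfrac{\rho_0}{L_0})$. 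The whole point of the choice of $\rho'$ is that $\tfrac{\rho_0}{\rho'}=\tfrac{\rho_1}{\rho}$, so $\Phi$ is in fact an $L_0$-biLipschitz map $\left(B(x,\rho'),\,\tfrac{\rho_1}{\rho}\cdot d\right)\to X$, i.e.\ it carries exactly the magnification factor required at scale $\rho_1$.

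Next I would let $\Psi$ be the restriction of $\Phi$ to $B(x,\rho)\subseteq B(x,\rho')$; a restriction of an $L_0$-biLipschitz map is $L_0$-biLipschitz for the restricted metric, so $\Psi\colon\left(B(x,\rho),\,\tfrac{\rho_1}{\rho}\cdot d\right)\to X$ is $L_0$-biLipschitz. It then remains to check the covering property $\Psi(B(x,\rho))\supseteq B(\Psi(x),\tfrac{\rho_1}{L_0})$, which amounts to $\Phi(B(x,\rho))\supseteq B(\Phi(x),\tfrac{\rho_1}{L_0})$. For this, take $y\in X$ with $d(\Phi(x),y)<\tfrac{\rho_1}{L_0}\le\tfrac{\rho_0}{L_0}$: then $y\in\Phi(B(x,\rho'))$ by the inclusion above, so $y=\Phi(z)$ for some $z\in B(x,\rho')$, and the lower biLipschitz bound gives $\tfrac{1}{L_0}\cdot\tfrac{\rho_1}{\rho}\,d(x,z)\le d(\Phi(x),\Phi(z))<\tfrac{\rho_1}{L_0}$, hence $d(x,z)<\rho$. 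Thus $z\in B(x,\rho)$ and $y=\Phi(z)\in\Phi(B(x,\rho))$, as wanted.

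Since the ball $B(x,\rho)$ was arbitrary among those of radius at most $\rho_1$, this proves that $X$ is $(L_0,\rho_1)$-q.s.s. The only genuine idea in the argument is the rescaling trick of the first step --- enlarging $B(x,\rho)$ to $B(x,\tfrac{\rho_0}{\rho_1}\rho)$ so that the intrinsic magnification $\rho_0/(\cdot)$ built into Definition~\ref{defin-qss} coincides with the desired $\rho_1/\rho$ --- and after that everything is routine manipulation of the biLipschitz inequalities, so I do not anticipate any real obstacle. One should only note in passing that $\Phi$ is automatically injective because of its lower biLipschitz estimate, so speaking of it as biLipschitz onto its image is harmless.
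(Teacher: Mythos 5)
Your proof is correct and follows essentially the same route as the paper: apply the $(L_0,\rho_0)$-definition to the enlarged ball $B\bigl(x,\tfrac{\rho_0}{\rho_1}\rho\bigr)$ so that the built-in rescaling factor becomes $\tfrac{\rho_1}{\rho}$, restrict to $B(x,\rho)$, and use the lower biLipschitz bound to verify $\Phi(B(x,\rho))\supseteq B\bigl(\Phi(x),\tfrac{\rho_1}{L_0}\bigr)$. No issues to report.
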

\begin{proof}
	We fix $x\in X$ and $0<\rho\leq \rho_1$. We apply the definition of $(L_0,\rho_0)$-quasi-selfsimilarity to the ball $B(x,\frac{\rho_0}{\rho_1}\rho)$: we can find a $L_0$-biLipschitz map $\Phi \colon  \left(B(x,\frac{\rho_0}{\rho_1} \rho), \frac{\rho_1}{\rho}\cdot d\right) \to X$  such that $\Phi(B(x,\frac{\rho_0}{\rho_1}\rho)) \supseteq B(\Phi(x),\frac{\rho_0}{L_0})$.
%	such that
%	\begin{itemize}
%		\item[(i)] $B(\Phi(x), \frac{\rho_0}{L_0}) \subseteq \Phi(B(x,\frac{\rho_0}{\rho_1}\cdot \rho)) \subseteq B(\Phi(x), L_0\rho_0)$;
%		\item[(ii)] the map $\Phi \colon  (B(x,\frac{\rho_0}{\rho_1}\cdot \rho), \frac{\rho_1}{\rho}d) \to X$ is $L_0$-biLipschitz onto its image.
%	\end{itemize}
%	This implies that
%	\begin{equation}
%		\label{equation-biLipschitz-propagation}
%		\frac{1}{L_0}\cdot \frac{\rho_1}{\rho}\cdot d(x,y) \leq d(\Phi(x),\Phi(y)) \leq L_0\cdot \frac{\rho_1}{\rho}\cdot d(x,y)
%	\end{equation}
%	for every $y\in B(x, \frac{\rho_0}{\rho_1}\cdot \rho)$. We consider the restriction of $\Phi$ to $B(x,\rho)$. Clearly for every $y\in B(x,\rho)$ we have $d(\Phi(x),\Phi(y)) \leq L_0\rho_1$ by \eqref{equation-biLipschitz-propagation}. This shows that $\Phi(B(x,\rho)) \subseteq B(\Phi(x),L_0\rho_1)$. We now take any point $z\in B(\Phi(x),\frac{\rho_1}{L_0})$. By (i) we know it exists a point $y\in B(x,\frac{\rho_0}{\rho_1}\cdot\rho)$ such that $z=\Phi(y)$. Therefore by \eqref{equation-biLipschitz-propagation} we get $d(x,y)\leq \rho$, i.e. $B(\Phi(x),\frac{\rho_1}{L_0}) \subseteq \Phi(B(x,\rho))$. Finally it is clear that the map $\Phi\colon (B(x,\rho), \frac{\rho_1}{\rho}d)\to X$ is $L_0$-biLipschitz onto its image, concluding the proof.
Then it is straightforward to see that the restriction of $\Phi$ to $\left(B(x,\rho), \frac{\rho_1}{\rho}\cdot d\right)$ is still $L_0$-biLipschitz. We now take any point $z\in B(\Phi(x),\frac{\rho_1}{L_0})$. We know there exists a point $y\in B(x,\frac{\rho_0}{\rho_1}\rho)$ such that $z=\Phi(y)$. By the property of $\Phi$ we get $d(x,y)< \rho$. This shows that $\Phi(B(x,\rho)) \supseteq B(\Phi(x),\frac{\rho_1}{L_0})$.
\end{proof}

Let $X$ be a $(L_0,\rho_0)$-q.s.s. space. We denote by $i_0$ the smallest integer such that $2(L_0 + 5)^2\cdot 10^{-i_0} \leq \rho_0$. We define $I_0 = \lbrace i \in \mathbb{N} \text{ s.t. } (L_0 + 5)\cdot 10^{-i} \geq 10^{-i_0}\rbrace$. Observe that the set $I_0$ is of the form $\lbrace 1,\ldots,n_0\rbrace$ where $n_0$ depends only on $L_0$ and $\rho_0$. We set
$$p\text{-Mod}_k(X,n_0):= \sup_{i\leq n_0}\sup_{y\in X_{i}}p\text{-Mod}_k(y).$$ 
The following is the main result of the section: it allows to use \emph{the fixed sizes up to $n_0$} to estimate the combinatorial modulus. Since the explicit doubling constant of our metric space plays an important role we sometimes add it in the definition: we say a metric space is $(L_0,\rho_0, D_0)$-q.s.s. if it is $(L_0,\rho_0)$-q.s.s. and $D_0$-doubling. 
\begin{prop}
	\label{prop-key}
	Let $X$ be $(L_0,\rho_0, D_0)$-q.s.s. Then there exist a constant $C_0\geq 1$ and an integer $\ell_0$, both depending only on $L_0$ and $D_0$, such that
	$$p\textup{-Mod}_{k+\ell_0}(X,n_0) \leq p\textup{-Mod}_{k+\ell_0}(X) \leq C_0\cdot p\textup{-Mod}_{k}(X,n_0)$$
	for every $k\in\mathbb{N}$.
\end{prop}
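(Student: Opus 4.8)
The plan is as follows. The first inequality is a tautology: $p\text{-Mod}_{k+\ell_0}(X,n_0)$ is the supremum of the quantities $p\text{-Mod}_{k+\ell_0}(y)$ over $\{y\in X_i:i\le n_0\}$, a subfamily of those defining $p\text{-Mod}_{k+\ell_0}(X)$. For the second inequality it suffices to produce $\ell_0\in\mathbb{N}$ and $C_0\ge 1$, depending only on $L_0$ and $D_0$, with $p\text{-Mod}_{k+\ell_0}(y)\le C_0\cdot p\text{-Mod}_k(X,n_0)$ for every $k$, every $i$, and every $y\in X_i$; taking the supremum over $i$ and $y$ then finishes the proof.

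The key reduction is to pass, when $i>n_0$, from the scale $i$ to a scale $\le n_0$, and this is exactly where Definition \ref{defin-qss} enters. Fix $y\in X_i$ with $i>n_0$. First I would record a \emph{localization} principle: truncating a $(10,i+k+\ell_0)$-path the first time it leaves $B_{4,i}(y)$ produces a sub-path, so, since $\ell_0\ge 1$, the value $p\text{-Mod}_{k+\ell_0}(y)$ is unaffected if one only counts paths entirely contained in $B(y,\rho)$, where $\rho$ is a suitable fixed multiple of $10^{-i}$, of the order of $(L_0+5)^2\cdot 10^{-i}$. The bound $i>n_0$, via the definitions of $i_0$ and $I_0$, forces $\rho\le\rho_0$, so Definition \ref{defin-qss} provides an $L_0$-biLipschitz $\Phi\colon\bigl(B(y,\rho),\frac{\rho_0}{\rho}\cdot d\bigr)\to X$ with $\Phi(B(y,\rho))\supseteq B(\Phi(y),\frac{\rho_0}{L_0})$. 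Pushing the (truncated) paths, the optimal admissible function, and the part of $X_{i+k+\ell_0}$ lying in $B(y,\rho)$ forward by $\Phi$, one realizes $p\text{-Mod}_{k+\ell_0}(y)$ as the combinatorial modulus of a configuration sitting in $X$ around $\Phi(y)$ at a scale comparable to $\rho_0$ up to a factor depending only on $L_0$, hence — by the very choice of $i_0$ and $I_0$ — at a net scale $i'\le n_0$; the ball $B(\Phi(y),\rho_0/L_0)$ is precisely what is needed to keep the outer set $X\setminus B(\cdot)$ under control in the new configuration.

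Three sources of error arise in this transfer, and each is absorbed by tools already at hand. (1) The $L_0$-biLipschitz constant replaces the radii $3$ and $4$ by radii comparable to them up to factors $L_0^{\pm1}$; to keep the image annulus nondegenerate I would first widen $[3\cdot10^{-i},4\cdot10^{-i}]$ to an annulus of ratio larger than $L_0^2$ by the nontrivial half of Lemma \ref{lemma-rescaling}, perform the transfer there, and finally compare with the standard radii $[3,4]$ by Lemma \ref{lemma-base} (monotonicity) after a suitable choice of $i'$. (2) Since $\Phi$ does not map net points to net points, one rounds each $\Phi(q)$ to a nearby point of the appropriate net; this enlarges the path parameter from $10$ to some $\lambda''=\lambda''(L_0)$, which is brought back to $10$ by Lemma \ref{lemma-lambda-rescaling}, and makes at most a $D_0$-power of net points collapse onto a common one, whence a bounded multiplicative loss on $\sum_q f(q)^p$. (3) Each application of Lemma \ref{lemma-rescaling} or \ref{lemma-lambda-rescaling} shifts the depth parameter by a bounded amount and costs a bounded constant; summing these shifts fixes $\ell_0=\ell_0(L_0,D_0)$ and $C_0=C_0(L_0,D_0)$ and gives the desired bound for $i>n_0$. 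For $i\le n_0$ no blow-up is needed: the chained nontrivial half of Lemma \ref{lemma-rescaling} lowers the depth from $k+\ell_0$ to $k$, at the price of moving to a scale $i+\ell_0$; the resulting scales that still exceed $n_0$ are then handled by the blow-up above (there is enough depth left because $\ell_0$ is taken accordingly), and the remaining ones land directly inside $C_0\cdot p\text{-Mod}_k(X,n_0)$.

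I expect the genuine difficulty to be not a single idea but the simultaneous calibration of all the constants: one must fix the widened annulus, hence the numerical constants hidden in the definitions of $i_0$ and $I_0$ (this is what the factor $2(L_0+5)^2$ is for), so that for every $i>n_0$ the pushed-forward configuration truly sits inside $B(\Phi(y),\rho_0/L_0)$ and at a net scale $\le n_0$; and one must keep a uniform account of the depth shifts produced by the repeated use of Lemmas \ref{lemma-rescaling} and \ref{lemma-lambda-rescaling}, together with those coming from reducing the scales close to $n_0$, so that a single $\ell_0$ independent of $k$ and of $i$ works in every case (padding the shorter chains with the elementary bound $p\text{-Mod}_{k+\ell}(X)\le C\cdot p\text{-Mod}_k(X)$, obtained by iterating Lemma \ref{lemma-rescaling}).
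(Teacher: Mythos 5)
Your proposal is correct and follows essentially the same route as the paper: blow up a localized annulus configuration at a scale $i>n_0$ to a definite scale $\le n_0$ via the map of Definition \ref{defin-qss}, transfer paths and pull back an optimal admissible function with a multiplicity loss controlled by $D_0$, and normalize annuli and $\lambda$ using Lemmas \ref{lemma-rescaling} and \ref{lemma-lambda-rescaling}. The only real difference is calibration: the paper fixes the rescaling factor $\tfrac{10^{-i_0}}{(L_0+5)10^{-i}}$ (via Lemma \ref{lemma-qss-propagation}) and the wide annulus $(1,(L_0+5)^3)$ so the transferred configuration lands directly on the standard $(3,4)$, $\lambda=10$ configuration at scale $i_0$, needing a single prior application of Lemma \ref{lemma-rescaling}, whereas you repair the distortion afterwards — which works, provided you use the pointwise versions of Lemmas \ref{lemma-rescaling} and \ref{lemma-lambda-rescaling} (they are stated for the global supremum) and track that the landing scale stays $\le n_0$, exactly the bookkeeping you flag as the main difficulty.
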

%\begin{obs}
%	Since the quantity in the middle does not depend on $\rho$, this result shows also that the behaviour of the sequence $\lbrace p\textup{-Mod}_{k}(X,\rho)\rbrace_{k\in \mathbb{N}}$ does not depend on $\rho$. We will explicitate better this fact in the sequel.
%\end{obs}
\begin{proof}
	The first inequality is trivial since we are doing a supremum among less elements.
	In order to show the second inequality we fix $i\in \mathbb{N}$ and $y\in X_i$. Clearly we can suppose $i > n_0$. By Lemma \ref{lemma-qss-propagation} we know that $X$ is also $(L_0,2(L_0 + 5)^2\cdot 10^{-i_0})$-q.s.s..  Since $2(L_0 + 5)^3\cdot 10^{-i} < 2(L_0 + 5)^2\cdot 10^{-i_0}$, then there is a $L_0$-biLipschitz map 
	$$\Phi \colon  \left(B(y,2(L_0+5)^3\cdot 10^{-i}), \frac{10^{-i_0}}{(L_0+5)\cdot 10^{-i}}\cdot d\right) \to X.$$
%	
%	
%	$\Phi\colon B(y, 2(L_0 + 3)^3\cdot 10^{-i}) \to X$ such that:
%	\begin{itemize}
%		\item[(i)] $B(\Phi(y), 2(L_0+3)\cdot 10^{-i_0}) \subseteq \Phi(B(y,2(L_0 + 3)^3\cdot 10^{-i})) \subseteq B(\Phi(y), 2(L_0 + 3)^3\cdot 10^{-i_0})$;
%		\item[(ii)] the map  is $L_0$-biLipschitz onto its image.
%	\end{itemize}
	We choose a point $x\in X_{i_0}$ such that $d(x,\Phi(y)) \leq 10^{-i_0}$.
	We consider any $(10,i+k)$-path $\lbrace q_j \rbrace_{j=0}^M$ joining $\overline{B}_{1,i}(y)$ to $X\setminus B_{(L_0 + 5)^3,i}(y)$. This means 
	\begin{itemize}
		\item[-] $q_0 \in \overline{B}(y,10^{-i})$ and $q_M \notin B(y,(L_0 + 5)^3\cdot 10^{-i})$;
		\item[-] $\overline{B}(q_j, 10\cdot 10^{-i-k}) \cap \overline{B}(q_{j+1}, 10\cdot 10^{-i-k}) \neq \emptyset$ for every $j = 0,\ldots,M-1$.
	\end{itemize}
	Suppose that $q_j\in B_{2(L_0 + 5)^3,i}(y)$ for every $j=0,\ldots,M$. Then we can choose a point $\tilde{q}_j \in X_{i_0 + k -1}$ such that $d(\tilde{q}_j, \Phi(q_j)) \leq 10^{-i_0 - k +1}$ for every $j=0,\ldots,M$. By the property of $\Phi$ we get $d(\Phi(y), \Phi(q_0)) \leq \frac{L_0}{L_0+5}\cdot 10^{-i_0}$ and $d(\Phi(y),\Phi(q_M)) \geq \frac{(L_0 + 5)^2}{L_0}\cdot 10^{-i_0}$. Therefore we have:
	$$d(x,\tilde{q}_0) \leq d(x,\Phi(y)) + d(\Phi(y),\Phi(q_0)) + d(\Phi(q_0), \tilde{q}_0) \leq 3\cdot 10^{-i_0},$$
	$$d(x,\tilde{q}_M) \geq d(\Phi(y),\Phi(q_M)) - d(x,\Phi(y)) - d(\Phi(q_M), \tilde{q}_M) \geq (L_0 +3)10^{-i_0} \geq 4\cdot 10^{-i_0}.$$
	Moreover we know that $d(q_j,q_{j+1})\leq 20\cdot 10^{-i-k}$ for every $j=0,\ldots,M-1$. Therefore we get
	\begin{equation*}
		\begin{aligned}
			d(\tilde{q}_j, \tilde{q}_{j+1}) &\leq d(\tilde{q}_j, \Phi(q_j)) + d(\Phi(q_j), \Phi(q_{j+1}) + d(\Phi(q_{j+1}), \tilde{q}_{j+1}) \\
			&\leq 10^{-i_0-k+1} + 20\cdot 10^{-i_0-k} + 10^{-i_0-k+1} \\
			&\leq 10 \cdot 10^{-i_0 - k + 1}.
		\end{aligned}
	\end{equation*}
	In other words $\lbrace \tilde{q}_j\rbrace_{j=0}^M$ is a $(10,i_0+k-1)$-path joining $\overline{B}_{3,i_0}(x)$ to $X\setminus B_{4,i_0}(x)$. \\
	We take an optimal map $\tilde{f}\in \mathcal{A}_{10, i_0+k-1}(\overline{B}_{3,i_0}(x), X\setminus B_{4,i_0}(x))$. We define the map $f\colon X_{i+k} \to [0,+\infty)$ by
	$$f(q) := \max\lbrace \tilde{f}(\tilde{q}) : \tilde{q}\in X_{i_0+k-1}\cap \overline{B}(\Phi(q), 10^{-i_0-k+1})\rbrace$$
	if $q\in \overline{B}(y,2(L_0 + 5)^3\cdot 10^{-i})$ and $0$ otherwise.
%	\begin{cases}
%		0 &\text{ if } q\notin B(y,2(L_0 + 5)^3\cdot 10^{-i})\\
%		\max\lbrace \tilde{f}(\tilde{q}) : \tilde{q}\in X_{i_0+k-1}\cap B(\Phi(q), 10^{-i_0-k+1})\rbrace &\text{ if } q\in B(y,2(L_0 + 5)^3\cdot 10^{-i}).
%	\end{cases}
%	$$
	We want to show that $f\in \mathcal{A}_{i+k}(\overline{B}_{1,i}(y), X\setminus B_{(L_0 + 5)^3,i}(y))$. We consider any path  $\lbrace q_j \rbrace_{j=0}^M \in P_{10,i+k}(\overline{B}_{1,i}(y), X\setminus B_{(L_0 + 5)^3,i}(y))$. First of all we can extract the minimal subpath $\lbrace q_j \rbrace_{j=0}^{M'}$ such that $q_{M'} \notin B_{(L_0 + 5)^3,i}(y)$. Clearly if $\sum_{j=0}^{M'} f(q_j) \geq 1$ then also $\sum_{j=0}^{M} f(q_j) \geq 1$, so it is enough to check the admissibility condition on this minimal subpath.
	For such a minimal subpath we can construct the path $\lbrace \tilde{q}_j \rbrace_{j=0}^{M'}$ as in the first part of the proof since  $q_j\in B_{2(L_0 + 5)^3,i}(y)$ for every $j=0,\ldots,M'$. By definition it holds:
	$$\sum_{j=0}^{M'} f(q_j) \geq \sum_{j=0}^{M'} \tilde{f}(\tilde{q}_j) \geq 1.$$
	Moreover we have
	\begin{equation*}
		\sum_{q\in X_{i+k}} f(q)^p \leq D_0 \cdot \sum_{\tilde{q}\in X_{i_0+k-1}} \tilde{f}(\tilde{q})^p \leq D_0\cdot p\text{-Mod}_{k-1}(x) \leq D_0\cdot p\text{-Mod}_{k-1}(X,n_0),
	\end{equation*}
	since $x\in X_{i_0}$ and $1\leq i_0\leq n_0$ by definition.
	By the arbitrariness of $i\in \mathbb{N}$ and $y \in X_i$ we conclude
	$$p\text{-Mod}_{10,k}^{1,(L_0 + 5)^3}(X) \leq D_0\cdot p\text{-Mod}_{k-1}(X,n_0)$$
	for every $k\geq 1$. Using Lemma \ref{lemma-rescaling} we obtain the second inequality, indeed:
	\begin{equation*}
		\begin{aligned}
			p\text{-Mod}_k(X) = p\text{-Mod}_{10,k}^{3,4}(X) &\leq C \cdot p\text{-Mod}_{10,k - \ell}^{1, (L_0+5)^3}(X) \\
			&\leq C\cdot D_0 \cdot p\text{-Mod}_{k- \ell -1}(X,n_0),
		\end{aligned}
	\end{equation*}
	where $C$ and $\ell$ are constants depending only on $L_0$ and $D_0$. The thesis follows choosing $C_0 = C\cdot D_0$ and $\ell_0 = \ell + 1$.	

\end{proof}

The Ahlfors regular conformal dimension of a compact, doubling, uniformly perfect metric space $(X,d)$ coincides with the critical exponent of the combinatorial modulus.
\begin{theo}[\textup{\cite[Theorem 4.5]{Pia11}}]
	\label{theo-CD-modulus}
	Let $(X,d)$ be a compact, doubling, uniformly perfect metric space. Then
		$$\textup{CD}(X,d) = \inf\lbrace p\geq 0 \textup{ s.t. } \liminf_{k\to+\infty} p\textup{-Mod}_k(X) = 0 \rbrace.$$
\end{theo}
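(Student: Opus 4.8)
Here $Q_M$ denotes the right-hand side; I would deduce the identity from the two inequalities $Q_M\le\mathrm{CD}(X,d)$ and $\mathrm{CD}(X,d)\le Q_M$, working in the framework of Carrasco Piaggio \cite{Pia11}. The structural fact to set up first is that $Q_M$ depends only on the quasisymmetry class of $d$: a quasisymmetry $F\colon(X,d)\to(Y,d')$ sends $10^{-k}$-balls of $X$ to ``roundish'' subsets of $Y$ with comparable inner and outer radii and uniformly bounded overlap, hence induces, level by level, a bounded-valence quasi-isometry of the graphs underlying the combinatorial moduli, under which $Q_M$ is invariant --- up to multiplying $p\text{-Mod}_k$ by a constant and shifting $k$ by a bounded amount, which is precisely what the rescaling Lemmas \ref{lemma-rescaling} and \ref{lemma-lambda-rescaling} absorb. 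Here the doubling hypothesis keeps the nets $X_k$ locally finite with bounded valence, so that $p\text{-Mod}_k(X)$ is finite and these comparisons make sense, while uniform perfectness guarantees that the annuli $\overline{B}_{3,i}(y),\,X\setminus B_{4,i}(y)$ stay nondegenerate at every scale, so that $Q_M$ is not artificially small.

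For $Q_M\le\mathrm{CD}(X,d)$, by the quasisymmetric invariance it suffices to bound $Q_M$ computed with an Ahlfors $s$-regular $d'\in\mathcal J(X,d)$ (the case where $d'$ only has Hausdorff dimension $s$ follows from the same estimate with a near-optimal covering by $d'$-balls and the $p$-dimensional Hausdorff content in place of the measure). Using the $d'$-nets: a $(10,i+k)$-path joining $\overline{B}_{3,i}(y)$ to $X\setminus B_{4,i}(y)$ must cross an annulus of width comparable to $10^{-i}$ in steps of size $\lesssim 10^{-(i+k)}$, so it has $\gtrsim 10^{k}$ members, whereas the number of points of $X_{i+k}$ meeting a neighbourhood of that annulus is $\lesssim 10^{ks}$ by Ahlfors regularity (the measure of the annulus is $\lesssim 10^{-is}$ and each $10^{-(i+k)}$-separated point carries a ball of measure $\gtrsim 10^{-(i+k)s}$). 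Taking the admissible function equal to $c\cdot 10^{-k}$ on those net points gives $\sum_q f(q)^p\lesssim 10^{-kp}\cdot 10^{ks}=10^{-k(p-s)}$, uniformly in $i$ and $y$; hence $p\text{-Mod}_k(X)\to0$ for every $p>s$, so $Q_M\le s=\mathrm{HD}(X,d')$, and taking the infimum over $d'$ closes this half.

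For $\mathrm{CD}(X,d)\le Q_M$ --- the substantial half --- I would run the combinatorial uniformization of \cite{Pia11}. Fix $p>Q_M$, so $\liminf_k p\text{-Mod}_k(X)=0$. A submultiplicativity estimate (a path crossing the annulus at mesh $i+k_1+k_2$ factors through crossing a subannulus at mesh $i+k_1$ and then refining it at mesh $(i+k_1)+k_2$, after matching radius ratios via the rescaling lemmas) upgrades this to geometric decay $p\text{-Mod}_{mk_0}(X)\le C\theta^m$ with $\theta<1$, once $k_0$ is chosen with $p\text{-Mod}_{k_0}(X)$ small. From the optimal admissible functions of all annuli at all levels I would then assemble a single weight $\rho\colon\bigsqcup_k X_k\to(0,\infty)$ that is $p$-summable on each level, $\sum_{q\in X_k}\rho(q)^p\lesssim1$, and that satisfies Loewner-type lower bounds --- every path from a ball to the complement of a comparable ball has $\rho$-length at least a fixed fraction of the $\rho$-weight of the parent ball --- together with a Gehring--Hayman comparison between $\rho$-lengths of paths and $\rho$-diameters of balls. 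The weight then determines a distance $d_\rho$ on $X$ (essentially $\sum_q\rho(q)$ along thin chains of net-balls), the lower bounds making $d_\rho$ a genuine metric whose balls are comparable level by level to the $d$-net-balls; this forces $\mathrm{id}\colon(X,d)\to(X,d_\rho)$ to be quasisymmetric, and --- using uniform perfectness for the lower regularity estimate --- makes $(X,d_\rho)$ Ahlfors $s$-regular with $s\le p+\varepsilon$, the upper bound $\mathcal H^{s}(X,d_\rho)<\infty$ coming from the $p$-summability of $\rho$ (with an $\varepsilon$-loss). Hence $d_\rho\in\mathcal J(X,d)$ with $\mathrm{HD}(X,d_\rho)\le p+\varepsilon$; letting $\varepsilon\to0$ and then $p\downarrow Q_M$ yields $\mathrm{CD}(X,d)\le Q_M$.

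The step I expect to be the main obstacle is the construction and verification just described: producing a weight $\rho$ that \emph{simultaneously} has small $\ell^p$-mass and obeys the Loewner and Gehring--Hayman bounds at \emph{every} scale --- the passage from ``small modulus along a sparse subsequence of scales'' to uniform control at all scales is the delicate self-improvement step --- and then checking that $d_\rho$ is honestly quasisymmetric to $d$, rather than a pseudometric collapsing some pair of points. The other ingredients (the ball-count of the first inequality, the submultiplicativity, and the Hausdorff-measure bookkeeping) should be routine given doubling and uniform perfectness.
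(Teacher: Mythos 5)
First, a point of comparison: the paper does not prove this statement at all --- Theorem \ref{theo-CD-modulus} is imported verbatim from \cite[Theorem 4.5]{Pia11}, so there is no in-paper argument to measure you against; what you have written is, in effect, a compressed reconstruction of Carrasco Piaggio's proof. Your setup (quasisymmetric invariance of the critical exponent of the combinatorial modulus, absorbed by the rescaling Lemmas \ref{lemma-rescaling} and \ref{lemma-lambda-rescaling}) and your easy direction are the right strategy: with an Ahlfors $s$-regular $d'$ in the gauge, the ball-count versus path-length argument does give $p\textup{-Mod}_k(X)\lesssim 10^{-k(p-s)}$ and hence $Q_M\leq \textup{CD}(X,d)$. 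Two small remarks there: the counting is done with respect to $d'$ while the nets and paths live in $(X,d)$, so the quasisymmetric invariance you state in the first paragraph is genuinely needed and is itself a nontrivial lemma (it is where doubling and uniform perfectness enter in \cite{Pia11}); and your parenthetical fallback for a non-Ahlfors-regular $d'$ is unnecessary, since $\mathcal{J}(X,d)$ here is the \emph{Ahlfors regular} conformal gauge (and Hausdorff content alone would not give the uniform net-cardinality bound your estimate uses).

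The substantial issue is the converse inequality, and there your text is a plan rather than a proof: the assembly of a single weight $\rho$ with $\ell^p$-control at every level, the Loewner-type and Gehring--Hayman lower bounds, the verification that $d_\rho$ is a genuine (non-collapsing) metric quasisymmetric to $d$, and the Ahlfors regularity of $(X,d_\rho)$ with dimension $\leq p+\varepsilon$ are exactly the content of Carrasco Piaggio's main construction, carried out there via the hyperbolic filling of $(X,d)$ and a characterization of the metrics in the gauge by weight functions on the filling satisfying summability and Harnack-type hypotheses; the passage from ``$\liminf_k p\textup{-Mod}_k(X)=0$'' to uniform control at all scales (via submultiplicativity along a well-chosen sparse sequence of scales) is the delicate self-improvement you yourself flag as the main obstacle. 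So, judged as a standalone proof, the hard half has a genuine gap precisely where you say it does; judged as an account of how the cited result is proved, it is faithful to \cite{Pia11}, which is all the paper itself relies on.
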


By Lemma \ref{lemma-rescaling} and Lemma \ref{lemma-lambda-rescaling}, the right hand quantity does not depend on our specific choices of $\lambda=10$, $L_1=3$ and $L_2 = 4$ in the definition of $p\text{-Mod}_k(X)$: the critical exponent associated to any other admissible choice of $\lambda,L_1,L_2$ equals the Ahlfors regular conformal dimension of $(X,d)$.
Moreover, following again \cite{Pia11} and \cite{BK13}, in the quasi-selfsimilar setting it is possible to find a uniform estimate which will be the key ingredient of the proof of Theorem \ref{theorem-main-intro}.
\begin{prop}
	\label{prop-uniform-lower-bound-p-modulus}
	Let $(X,d)$ be a perfect $(L_0,\rho_0, D_0)$-q.s.s. metric space and let $p< \textup{CD}(X,d)$. Then there exists a constant $\lambda_0$ depending only on $D_0$, $L_0$ and $p$ such that
	$$p\textup{-Mod}_k(X,n_0) \geq \lambda_0 > 0$$
	for all $k> 0$.
\end{prop}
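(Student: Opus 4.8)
\emph{Proof plan.}
The plan is to run a contradiction argument against the characterization of $\textup{CD}(X,d)$ in Theorem \ref{theo-CD-modulus}. Observe first that $X$ is compact and $D_0$-doubling by hypothesis, and, being perfect and quasi-selfsimilar, it is uniformly perfect by Proposition \ref{prop-perfect-implies-uniform}(ii); hence Theorem \ref{theo-CD-modulus} applies, and the assumption $p<\textup{CD}(X,d)$ says precisely that $\liminf_{k\to+\infty}p\textup{-Mod}_k(X)>0$. I will show that if $p\textup{-Mod}_{k_0}(X,n_0)$ were below a suitable threshold $\lambda_0$ for some $k_0$, then in fact $\liminf_{k\to+\infty}p\textup{-Mod}_k(X)=0$, a contradiction. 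Two ingredients feed into this: Proposition \ref{prop-key}, which exchanges the restricted quantity $p\textup{-Mod}_k(X,n_0)$ with the full one $p\textup{-Mod}_k(X)$ at the price of constants $C_0,\ell_0$ depending only on $L_0,D_0$; and an almost-submultiplicativity of the sequence $k\mapsto p\textup{-Mod}_k(X)$, which is the standard ``chain rule'' for combinatorial modulus on doubling spaces (no self-similarity is needed for this part).

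\textbf{Step 1: submultiplicativity.} Following \cite{Pia11} and \cite{BK13}, I would first establish that there are constants $C_1\geq 1$ and $\ell_1\in\mathbb{N}$, depending only on $D_0$, such that
\[
p\textup{-Mod}_{k_1+k_2+\ell_1}(X)\ \leq\ C_1\cdot p\textup{-Mod}_{k_1}(X)\cdot p\textup{-Mod}_{k_2}(X)\qquad\text{for all }k_1,k_2\geq 1.
\]
Fix $i\in\mathbb{N}$, $y\in X_i$ and a $(10,i+k_1+k_2+\ell_1)$-path $\gamma=\{q_j\}$ joining $\overline{B}_{3,i}(y)$ to $X\setminus B_{4,i}(y)$; after extracting a minimal subpath as in the proof of Proposition \ref{prop-key} we may assume $\gamma$ leaves $B_{4,i}(y)$ only at its endpoint. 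Project $\gamma$ to the intermediate net $X_{i+k_1}$, choosing $z_j\in X_{i+k_1}$ with $d(q_j,z_j)\leq 10^{-i-k_1}$: for $\ell_1$ large enough the deduplicated sequence of the $z_j$'s is a path at level $i+k_1$ crossing a slightly shrunk scale-$i$ annulus around $y$, hence — after the bounded scale and radius adjustments provided by Lemmas \ref{lemma-rescaling} and \ref{lemma-lambda-rescaling}, which is exactly what the shift $\ell_1$ accounts for — it is controlled by $p\textup{-Mod}_{k_1}(X)$; meanwhile, inside each intermediate ball centred at a $z\in X_{i+k_1}$ that $\gamma$ fully traverses, the induced sub-path crosses the scale-$(i+k_1)$ annulus of $z$ and is therefore controlled by $p\textup{-Mod}_{k_2}(z)\leq p\textup{-Mod}_{k_2}(X)$, directly from the definition of $p\textup{-Mod}_{k_2}(X)$ as a supremum over all centres and scales. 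Taking optimal functions $h$ for the coarse problem at $(i,y)$ and $g_z$ for the fine problem at each $z$, a suitable combination $f$ (for instance $f=\max_z h(z)\,g_z$) is admissible for the combined problem by the above decomposition, and its $p$-energy is bounded by $\sum_z h(z)^p\,p\textup{-Mod}_{k_2}(X)\leq p\textup{-Mod}_{k_1}(X)\cdot p\textup{-Mod}_{k_2}(X)$ up to the constant coming from Lemmas \ref{lemma-rescaling}--\ref{lemma-lambda-rescaling} and the doubling bound on overlaps. Taking the supremum over $i$ and $y$ gives the inequality.

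\textbf{Step 2: iteration and conclusion.} Let $C_0,\ell_0$ be as in Proposition \ref{prop-key} and $C_1,\ell_1$ as in Step 1, and set $\lambda_0:=\frac{1}{2\,C_0\,C_1}>0$, which depends only on $L_0,D_0$ (hence in particular only on $L_0,D_0,p$). Suppose, for contradiction, that $p\textup{-Mod}_{k_0}(X,n_0)<\lambda_0$ for some $k_0\geq 1$. By Proposition \ref{prop-key},
\[
\varepsilon\ :=\ p\textup{-Mod}_{k_0+\ell_0}(X)\ \leq\ C_0\cdot p\textup{-Mod}_{k_0}(X,n_0)\ <\ C_0\lambda_0\ =\ \tfrac{1}{2C_1}.
\]
Iterating the inequality of Step 1 with $k_1=k_2=k_0+\ell_0$ gives, by an immediate induction on $m\geq 1$,
\[
p\textup{-Mod}_{\,m(k_0+\ell_0)+(m-1)\ell_1}(X)\ \leq\ C_1^{\,m-1}\,\varepsilon^{\,m}\ =\ \frac{1}{C_1}\,(C_1\varepsilon)^{m}\ <\ \frac{1}{C_1}\,2^{-m}\ \xrightarrow[\ m\to+\infty\ ]{}\ 0 .
\]
Hence $\liminf_{k\to+\infty}p\textup{-Mod}_k(X)=0$, so Theorem \ref{theo-CD-modulus} yields $\textup{CD}(X,d)\leq p$, contradicting $p<\textup{CD}(X,d)$. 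Therefore $p\textup{-Mod}_k(X,n_0)\geq\lambda_0$ for every $k>0$, with $\lambda_0$ depending only on $L_0,D_0$ and $p$.

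\textbf{Expected main obstacle.} The technical heart is the admissibility check in Step 1: one must make precise the assertion that the intermediate-scale shadow of a path crossing the (comparatively enormous) scale-$i$ annulus necessarily fully crosses the scale-$(i+k_1)$ annuli of a whole chain of intermediate balls joining the two sides of the coarse annulus, and one must handle the endpoints of the induced sub-paths carefully (again by passing to minimal subpaths). Keeping every auxiliary scale shift and overlap count uniform — so that $\ell_1$ and $C_1$ depend only on $D_0$ — is routine but has to be done with some care. Everything else is bookkeeping with Lemmas \ref{lemma-base}, \ref{lemma-rescaling}, \ref{lemma-lambda-rescaling} and Proposition \ref{prop-key}.
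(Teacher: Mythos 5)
Your proposal is correct and follows essentially the same route as the paper: almost-submultiplicativity of $k \mapsto p\textup{-Mod}_k(X)$ (which the paper simply imports from \cite[Lemma 4.9]{Pia11} combined with Lemma \ref{lemma-rescaling}, rather than reproving it as in your Step 1), the characterization of $\textup{CD}(X,d)$ from Theorem \ref{theo-CD-modulus}, and the transfer between $p\textup{-Mod}_k(X)$ and $p\textup{-Mod}_k(X,n_0)$ via Proposition \ref{prop-key}. The only cosmetic differences are the direction of the contradiction (the paper first bounds $p\textup{-Mod}_k(X)$ below for all $k$ and then applies Proposition \ref{prop-key}) and that the submultiplicative constant in the paper depends on $p$ as well as $D_0$, which is harmless since $\lambda_0$ is allowed to depend on $p$.
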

\begin{proof}
	$(X,d)$ is uniformly perfect and doubling by Proposition \ref{prop-perfect-implies-uniform}, so the Ahlfors regular conformal dimension of $(X,d)$ can be computed as in Theorem \ref{theo-CD-modulus}. The result follows by a submultiplicative estimate. \cite[Lemma 4.9]{Pia11} proves
	$$p\text{-Mod}_{10,k+h}^{1,4}(X) \leq C\cdot p\text{-Mod}_{10,k}^{\frac{11}{10},\frac{39}{10}}(X) \cdot p\text{-Mod}_{10,h}^{1,4}(X)$$
	for all $k,h\geq 0$. Here $C$ is a constant depending only on $p$ and $D_0$. Applying Lemma \ref{lemma-rescaling} we get
	$$p\text{-Mod}_{k+h}(X) \leq C'\cdot p\text{-Mod}_{k-\ell}(X) \cdot p\text{-Mod}_{h}(X)$$
	for all $k\geq \ell$ and $h\geq 0$, where $C'$ is a constant depending only on $p$ and $D_0$ and $\ell$ is a universal constant. Let us denote by $a_k$ the quantity $p\text{-Mod}_{k}(X)$. The inequality above is $a_{k+h}\leq C'\cdot a_{k-\ell}\cdot a_h$. By Theorem \ref{theo-CD-modulus} $\liminf_{k\to+\infty}a_k > 0$ since $p< \text{CD}(X,d)$. This implies that $a_k \geq \frac{1}{C'}$ for all $k > 0$. Indeed, if there exists $k > 0$ such that $C'\cdot a_k < (1-\varepsilon)$ for some $\varepsilon > 0$ then 
	$$a_{n(k+\ell)} \leq C'\cdot a_{k}\cdot a_{(n-1)(k+\ell)} \leq \cdots \leq (1-\varepsilon)^n$$
	for all $n\in\mathbb{N}$. Therefore the subsequence $\lbrace a_{n(k+\ell)}\rbrace_{n\in\mathbb{N}}$ would converge to $0$, which is a contradiction. Hence we have found a constant $\lambda > 0$ depending only on $p$ and $D_0$ such that $a_k\geq \lambda$ for all $k > 0$.
	An application of Proposition \ref{prop-key} gives the thesis.
\end{proof}

\section{Upper semicontinuity of the conformal dimension}
\label{sec-upper}
Our scope is to study the behaviour of the Ahlfors regular conformal dimension under Gromov-Hausdorff convergence. For technical reasons it is often useful to study ultralimits instead of Gromov-Hausdorff limits. It essentially avoids to extract converging subsequences. For more detailed notions on ultralimits we refer to \cite{DK18} and \cite{CavS20}. A non-principal ultrafilter $\omega$ is a finitely additive measure on $\mathbb{N}$ such that $\omega(A) \in \lbrace 0,1 \rbrace$ for every $A\subseteq \mathbb{N}$ and $\omega(A)=0$ for every finite subset of $\mathbb{N}$. Accordingly we write $\omega$-a.s. and for $\omega$-a.e.$(n)$ in the usual measure theoretic sense. \\
Given a bounded sequence $(a_n)$ of real numbers and a non-principal ultrafilter $\omega$ there exists a unique $a\in \mathbb{R}$ such that for every $\varepsilon > 0$ the set $\lbrace n \in \mathbb{N} \text{ s.t. } \vert a_n - a \vert < \varepsilon\rbrace$ has $\omega$-measure $1$, see for instance \cite[Lemma 10.25]{DK18}. The real number $a$ is called the ultralimit of the sequence $a_n$ and it is denoted by $\omega$-$\lim a_n$.\\
If $(X_n, d_n, x_n)$ is a sequence of pointed metric spaces, we denote by $(X_\omega, d_\omega, x_\omega)$ the ultralimit pointed metric space. It is the set of sequences $(y_n)$, where $y_n\in X_n$ for every $n$, such that $\omega$-$\lim d(x_n,y_n) < + \infty$ for $\omega$-a.e.$(n)$, modulo the relation $(y_n)\sim (y_n')$ if and only if $\omega$-$\lim d(y_n,y_n') = 0$. The point of $X_\omega$ defined by the class of the sequence $(y_n)$ is denoted by $y_\omega = \omega$-$\lim y_n$.
The formula $d_\omega(\omega$-$\lim y_n, \omega$-$\lim y_n') = \omega$-$\lim d(y_n,y_n')$ defines a metric on $X_\omega$ which is called the ultralimit distance on $X_\omega$.\\
The relation between Gromov-Hausdorff convergence and ultralimits is summarized here.
\begin{prop}[\cite{Jan17}, Proposition 3.11; \cite{Cav21ter}, Proposition 3.13]
	\label{prop-GH-ultralimit}
	Let $(X_n, d_n, x_n)$ be a sequence of pointed, compact metric spaces and let $\omega$ be a non-principal ultrafilter.
	\begin{itemize}
		\item[(i)] If $(X_n, d_n) \underset{\textup{GH}}{\longrightarrow} (X_\infty, d_\infty)$ then $(X_\omega, d_\omega)$ is isometric to $(X_\infty, d_\infty)$. In particular the ultralimit does not depend on the choice of the basepoints.
		\item[(ii)] If $(X_\omega, d_\omega, x_\omega)$ is compact then $(X_{n_k}, d_{n_k}) \underset{\textup{GH}}{\longrightarrow} (X_\omega, d_\omega)$ for some subsequence $\lbrace{n_k}\rbrace$.
	\end{itemize}
\end{prop}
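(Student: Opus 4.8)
The plan is to handle the two items separately: (i) by exhibiting an explicit isometry, (ii) by a finite-net approximation combined with a diagonal extraction. For (i), I would first record that Gromov-Hausdorff convergence of the compact spaces $(X_n,d_n)$ to $(X_\infty,d_\infty)$ is witnessed by a sequence $\varepsilon_n\downarrow 0$ and maps $f_n\colon X_n\to X_\infty$ with $|d_\infty(f_n(y),f_n(y'))-d_n(y,y')|\le \varepsilon_n$ for all $y,y'\in X_n$ and with $f_n(X_n)$ being $\varepsilon_n$-dense in $X_\infty$. In particular $\mathrm{Diam}(X_n)\to\mathrm{Diam}(X_\infty)<\infty$, so every sequence $(y_n)$ with $y_n\in X_n$ has $\omega\text{-}\lim d_n(x_n,y_n)<\infty$ and defines a point of $X_\omega$, and neither $X_\omega$ nor its metric depends on the basepoints $x_n$. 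Then I would define $F\colon X_\omega\to X_\infty$ by $F(\omega\text{-}\lim y_n):=\omega\text{-}\lim f_n(y_n)$; this is meaningful because $X_\infty$ is compact, so the bounded sequence $(f_n(y_n))$ has an honest ultralimit in $X_\infty$ (cover $X_\infty$ by finitely many balls of radius $1/k$, one of which contains $f_n(y_n)$ for $\omega$-a.e.\ $n$, and let $k\to\infty$).

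The remaining work in (i) is to check that $F$ is a surjective distance-preserving map, hence an isometry of metric spaces. It is well defined since $d_\infty(f_n(y_n),f_n(y_n'))\le d_n(y_n,y_n')+\varepsilon_n$, so $\omega\text{-}\lim d_n(y_n,y_n')=0$ forces $\omega\text{-}\lim d_\infty(f_n(y_n),f_n(y_n'))=0$; it preserves distances because $|d_\infty(f_n(y_n),f_n(y_n'))-d_n(y_n,y_n')|\le\varepsilon_n\to 0$ yields $d_\infty(F(y_\omega),F(y_\omega'))=\omega\text{-}\lim d_n(y_n,y_n')=d_\omega(y_\omega,y_\omega')$; and it is onto because for $z\in X_\infty$ the $\varepsilon_n$-density gives $y_n\in X_n$ with $d_\infty(f_n(y_n),z)\le\varepsilon_n$, whence $F(\omega\text{-}\lim y_n)=z$.

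For (ii), I would assume $X_\omega$ compact (and, as discussed below, that $\mathrm{Diam}(X_n)$ is $\omega$-essentially bounded, which is automatic in all applications in the paper). For each $m$ fix a finite $\tfrac1m$-net $z^m_1,\dots,z^m_{N_m}$ of $X_\omega$ and write $z^m_j=\omega\text{-}\lim z^m_{j,n}$. Let $A_m$ be the set of $n$ for which both (a) $|d_n(z^m_{i,n},z^m_{j,n})-d_\omega(z^m_i,z^m_j)|<\tfrac1m$ for all $i,j\le N_m$, and (b) $\{z^m_{j,n}\}_j$ is a $\tfrac2m$-net of $X_n$. Condition (a) holds $\omega$-a.s., since it is a finite conjunction of defining properties of the ultralimit distance. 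If (b) failed on an $\omega$-positive, hence $\omega$-full, set, I would pick $x_n\in X_n$ there with $d_n(x_n,\{z^m_{j,n}\}_j)\ge\tfrac2m$, set $x_\omega:=\omega\text{-}\lim x_n\in X_\omega$, and use $\tfrac1m$-density to get $j$ with $d_\omega(x_\omega,z^m_j)<\tfrac1m$, i.e.\ $\omega\text{-}\lim d_n(x_n,z^m_{j,n})<\tfrac1m$, contradicting the choice of $x_n$ on an $\omega$-full set; hence $\omega(A_m)=1$. Since each $A_1\cap\cdots\cap A_m$ has $\omega$-measure $1$ it is infinite, so I can choose $n_1<n_2<\cdots$ with $n_m\in A_1\cap\cdots\cap A_m$; along this subsequence the correspondence $z^m_j\leftrightarrow z^m_{j,n_m}$ together with (a) and (b) is an $O(\tfrac1m)$-Gromov-Hausdorff approximation between $X_\omega$ and $X_{n_m}$, so $X_{n_m}\underset{\mathrm{GH}}{\longrightarrow}X_\omega$.

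The one point I expect to require care is the diameter bound in (ii): compactness of the pointed ultralimit only controls the part of each $X_n$ within bounded distance of $x_n$, and without a uniform (or $\omega$-essential) bound on $\mathrm{Diam}(X_n)$ one can have $\mathrm{Diam}(X_n)\to\infty$ while $X_\omega$ is a single point, so that no subsequence GH-converges to $X_\omega$. In this paper the $X_n$ always carry a uniform diameter bound (e.g.\ because they GH-converge to a compact space), so this is harmless; in general (ii) should be read with that hypothesis. Everything else is routine ultralimit bookkeeping, the substantive content being that $\varepsilon_n$-isometries pass to a genuine isometry in the limit and, conversely, that finite nets of a compact ultralimit pull back to approximate nets along an $\omega$-full set of indices.
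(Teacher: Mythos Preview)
The paper does not supply its own proof of this proposition; it simply cites \cite{Jan17} and \cite{Cav21ter} and moves on. Your argument is the standard one for both directions and is correct: in (i) you push $\varepsilon_n$-approximations through the ultrafilter to obtain a surjective isometry, and in (ii) you pull finite nets of $X_\omega$ back along the ultrafilter and diagonalize. The caveat you raise about (ii) is genuine --- the example $X_n=\{0,n\}$ with basepoint $0$ has $X_\omega$ a single point while no subsequence GH-converges to it --- and the cited references indeed carry an implicit uniform boundedness hypothesis (in \cite{Cav21ter} the precompactness in the pointed GH sense is assumed, which forces uniform total boundedness at each scale). As you note, this is harmless for every use in the present paper, where a uniform diameter bound is always available.
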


Let $(X_n, d_{X_n}, x_n), (Y_n, d_{Y_n}, y_n)$ be two sequences of pointed metric spaces and let $\omega$ be a non-principal ultrafilter. A sequence of maps $f_n\colon X_n \to Y_n$ is said admissible if $\omega$-$\lim d_{Y_n}(f_n(x_n), y_n) < + \infty$. A sequence of admissible $L$-Lipschitz maps $f_n$ defines a $L$-Lipschitz map $f_\omega = \omega$-$\lim f_n \colon (X_\omega, x_\omega) \to (Y_\omega, y_\omega)$ by $f_\omega(\omega$-$\lim x_n) = \omega$-$\lim f_n(x_n)$.\\
The class of uniformly perfect $(L_0,\rho_0)$-quasi-selfsimilar metric spaces is closed under Gromov-Hausdorff convergence.
\begin{prop}
	\label{prop-closeness-GH}
	Let $(X_n,d_n)$ be a sequence of compact, $a_0$-uniformly perfect, $(L_0,\rho_0)$-q.s.s. metric spaces. Suppose it converges in the Gromov-Hausdorff sense to a metric space $(X_\infty, d_\infty)$. Then $(X_\infty, d_\infty)$ is a compact, $a_0$-uniformly perfect, $(L_0,\rho_0)$-q.s.s. metric space.
\end{prop}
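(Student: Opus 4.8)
The plan is to verify each of the three properties of $(X_\infty,d_\infty)$ by passing to an ultralimit, using Proposition \ref{prop-GH-ultralimit}(i) to identify $(X_\infty,d_\infty)$ with the ultralimit $(X_\omega,d_\omega)$ of the sequence $(X_n,d_n)$ (with arbitrary basepoints $x_n$, since the ultralimit is basepoint-independent here). Compactness of $X_\infty$ is immediate: a Gromov-Hausdorff limit of compact spaces is compact, and in any case the uniform doubling bound from Proposition \ref{prop-perfect-implies-uniform}(i), which depends only on $L_0$ and $\rho_0$, is inherited by $X_\omega$, so $X_\omega$ is complete, bounded (the diameters are uniformly bounded by the GH-convergence) and totally bounded, hence compact.

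For $a_0$-uniform perfectness, I would argue directly on $X_\omega$. Fix $x_\omega=\omega\text{-}\lim x_n$ and $0\le \rho<\mathrm{Diam}(X_\omega)$. Since $\mathrm{Diam}(X_\omega)=\omega\text{-}\lim \mathrm{Diam}(X_n)$, for $\omega$-a.e.\ $n$ we have $\rho<\mathrm{Diam}(X_n)$ (after a harmless infinitesimal adjustment of $\rho$, or by taking $\rho'$ slightly smaller and noting the closed annulus condition is closed), so by $a_0$-uniform perfectness of $X_n$ there is a point $z_n\in \overline{B}(x_n,\rho)\setminus B(x_n,a_0\rho)$. The sequence $(z_n)$ is admissible, so $z_\omega=\omega\text{-}\lim z_n\in X_\omega$ satisfies $d_\omega(x_\omega,z_\omega)=\omega\text{-}\lim d_n(x_n,z_n)\in[a_0\rho,\rho]$; in particular $z_\omega\in\overline{B}(x_\omega,\rho)\setminus B(x_\omega,a_0\rho)$, which is therefore nonempty. (One should handle the boundary case $\rho=\mathrm{Diam}(X_\omega)$ or near it by approximating with smaller radii, using that the defining condition involves a closed ball minus an open ball.)

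The substantive point is preservation of $(L_0,\rho_0)$-quasi-selfsimilarity, and this is where the main work lies. Fix $x_\omega=\omega\text{-}\lim x_n\in X_\omega$ and $0<\rho\le\rho_0$. For $\omega$-a.e.\ $n$ apply Definition \ref{defin-qss} to the ball $B(x_n,\rho)$ in $X_n$ to obtain an $L_0$-biLipschitz map $\Phi_n\colon (B(x_n,\rho),\frac{\rho_0}{\rho}d_n)\to X_n$ with $\Phi_n(B(x_n,\rho))\supseteq B(\Phi_n(x_n),\frac{\rho_0}{L_0})$. Each $\Phi_n$ is $L_0\cdot\frac{\rho_0}{\rho}$-Lipschitz for the original metrics, and the sequence is admissible once we recenter so that $y_n:=\Phi_n(x_n)$ plays the role of basepoint in the target (the target ultralimit is again $X_\omega$, independent of basepoint); hence the $\Phi_n$ define a map $\Phi_\omega=\omega\text{-}\lim\Phi_n\colon (B(x_\omega,\rho),\frac{\rho_0}{\rho}d_\omega)\to X_\omega$. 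The biLipschitz bounds pass to the ultralimit: for $u_\omega=\omega\text{-}\lim u_n$, $v_\omega=\omega\text{-}\lim v_n$ in $B(x_\omega,\rho)$ one checks $u_n,v_n\in B(x_n,\rho)$ for $\omega$-a.e.\ $n$ and takes $\omega$-limits in the two-sided inequality $\frac{1}{L_0}\frac{\rho_0}{\rho}d_n(u_n,v_n)\le d_n(\Phi_n u_n,\Phi_n v_n)\le L_0\frac{\rho_0}{\rho}d_n(u_n,v_n)$. Finally, for the surjectivity-type condition: given $z_\omega\in B(\Phi_\omega(x_\omega),\frac{\rho_0}{L_0})$, write $\delta:=d_\omega(z_\omega,\Phi_\omega(x_\omega))<\frac{\rho_0}{L_0}$ and choose a representative $z_n$ with $d_n(z_n,\Phi_n(x_n))<\frac{\rho_0}{L_0}$ for $\omega$-a.e.\ $n$; then $z_n=\Phi_n(w_n)$ for some $w_n\in B(x_n,\rho)$, and — this is the delicate step — one needs $w_\omega:=\omega\text{-}\lim w_n$ to lie in the \emph{open} ball $B(x_\omega,\rho)$, not merely its closure. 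This is where the argument must be done carefully: from $z_n=\Phi_n(w_n)$ and the lower biLipschitz bound, $d_n(x_n,w_n)\le L_0\frac{\rho}{\rho_0}d_n(\Phi_n x_n,z_n)\le L_0\frac{\rho}{\rho_0}\delta<\rho$ with \emph{strict} inequality by a definite margin (since $\delta<\frac{\rho_0}{L_0}$ strictly, $L_0\frac{\rho}{\rho_0}\delta<\rho-\varepsilon$ for some fixed $\varepsilon>0$), so $\omega\text{-}\lim d_n(x_n,w_n)\le\rho-\varepsilon<\rho$ and $w_\omega\in B(x_\omega,\rho)$ with $\Phi_\omega(w_\omega)=z_\omega$. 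Thus $\Phi_\omega(B(x_\omega,\rho))\supseteq B(\Phi_\omega(x_\omega),\frac{\rho_0}{L_0})$, completing the verification. I expect the bookkeeping around strict versus non-strict inclusions of balls under ultralimits to be the only real obstacle; everything else is a routine transfer of Lipschitz estimates through the ultralimit.
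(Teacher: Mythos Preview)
Your proposal is correct and follows essentially the same route as the paper's own proof: identify $X_\infty$ with the ultralimit $X_\omega$ via Proposition~\ref{prop-GH-ultralimit}, pass the biLipschitz maps $\Phi_n$ to an ultralimit map $\Phi_\omega$, and handle the open-versus-closed ball issues by inserting an $\varepsilon$-margin (the paper writes this as $(1-2\varepsilon)\frac{\rho_0}{L_0}$ for the q.s.s.\ surjectivity and $(1-\varepsilon)\rho$ for uniform perfectness, exactly the ``definite margin'' you describe). The only cosmetic slip is writing $d_n(\Phi_n x_n,z_n)\le\delta$ as if it held pointwise rather than in the $\omega$-limit, but your next line makes clear you are taking the ultralimit, and the conclusion $\omega\text{-}\lim d_n(x_n,w_n)\le L_0\tfrac{\rho}{\rho_0}\delta<\rho$ is correct.
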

\begin{proof}
	$X_\infty$ is compact by definition of Gromov-Hausdorff convergence. We fix a non-principal ultrafilter $\omega$ and we call $X_\omega$ the ultralimit space: it does not depend on the basepoints and it is isometric to $X_\infty$ by Proposition \ref{prop-GH-ultralimit}. We fix a point $x_\omega = \omega$-$\lim x_n \in X_\omega$ and a positive real number $\rho\leq \rho_0$. For every $n$ there exists a $L_0$-biLipschitz map $\Phi_n\colon \left(B(x_n,\rho), \frac{\rho_0}{\rho}\cdot d_n \right) \to X_n$ with $\Phi_n(B(x_n,\rho)) \supseteq B(\Phi_n(x_n), \frac{\rho_0}{L_0})$. The sequence of maps $\Phi_n$ is clearly admissible, so it defines a ultralimit $L_0$-biLipschitz map $\Phi_\omega$, which is defined on the ultralimit space of the sequence $\left(B(x_n,\rho), \frac{\rho_0}{\rho}\cdot d_n \right)$. We observe that this ultralimit space contains $B(x_\omega, \rho)$. Indeed if $y_\omega = \omega$-$\lim y_n \in B(x_\omega, \rho)$, then $d_n(y_n, x_n) < \rho$ $\omega$-a.s. Moreover the ultralimit metric of the metrics $\frac{\rho_0}{\rho}\cdot d_n$ is $\frac{\rho_0}{\rho}\cdot d_\omega$. So we can restrict $\Phi_\omega$ to a $L_0$-biLipschitz map from $\left(B(x_\omega,\rho), \frac{\rho_0}{\rho}\cdot d_\omega \right) \to X_\omega$. We need to show that $\Phi_\omega(B(x_\omega,\rho)) \supseteq B(\Phi_\omega(x_\omega), \frac{\rho_0}{L_0})$. We take $y_\omega = \omega$-$\lim y_n$ such that $d_\omega(y_\omega, \Phi_\omega(x_\omega)) \leq (1-2\varepsilon)\cdot \frac{\rho_0}{L_0}$, with $\varepsilon > 0$. By definition $d_n(y_n, \Phi_n(x_n)) \leq (1-\varepsilon)\cdot \frac{\rho_0}{L_0}$ for $\omega$-a.e.$(n)$. By assumption we can find points $z_n \in B(x_n,\rho)$ such that $\Phi_n(z_n) = y_n$, $\omega$-a.s. These points satisfy $$\frac{\rho_0}{\rho}\cdot d_n(z_n, x_n) \leq L_0\cdot d_n(y_n, \Phi_n(x_n)) \leq (1-\varepsilon)\cdot \rho_0,$$
	so $d_n(x_n,z_n) \leq (1-\varepsilon)\cdot \rho$. Clearly the point $z_\omega = \omega$-$\lim z_n$ belongs to $B(x_\omega, \rho)$ and satisfies $\Phi_\omega(z_\omega) = y_\omega$.\\
	It remains only to prove that $X_\omega$ is $a_0$-uniformly perfect. We fix $x_\omega = \omega$-$\lim x_n \in X_\omega$ and $0<\rho\leq \text{Diam}(X_\omega)$. For every $\varepsilon > 0$ we have $(1-\varepsilon)\rho \leq \text{Diam}(X_n)$ for $\omega$-a.e.$(n)$, therefore there exists a point $y_n^\varepsilon\in X_n$ with $d(x_n,y_n^\varepsilon)\leq (1-\varepsilon)\rho$ and $d(x_n,y_n^\varepsilon)\geq a_0(1-\varepsilon)\rho$ for $\omega$-a.e.$(n)$. We consider the ultralimit point $y_\omega^\varepsilon = \omega$-$\lim y_n^\varepsilon \in X_\omega$. It satisfies $d(x_\omega,y_\omega^\varepsilon)\leq (1-\varepsilon)\rho$ and $d(x_\omega,y_\omega^\varepsilon)\geq a_0(1-\varepsilon)\rho$. Since this is true for every $\varepsilon > 0$ and since $X_\omega$ is compact we can find a point $y_\omega \in X_\omega$ such that  $d(x_\omega,y_\omega)\leq \rho$ and $d(x_\omega,y_\omega)\geq a_0\rho$, showing that $X_\omega$ is $a_0$-uniformly perfect.
\end{proof}
\begin{obs}
	This proposition, together with Proposition \ref{prop-perfect-implies-uniform}, implies that the Gromov-Hausdorff limit of a sequence of compact $(L_0,\rho_0)$-q.s.s. metric spaces with diameters bounded below by $c_0 > 0$, as considered by \cite{Kle06} and \cite{Pia11}, is still uniformly perfect.
\end{obs}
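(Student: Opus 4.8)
The plan is to read the statement off as a direct consequence of Proposition~\ref{prop-perfect-implies-uniform}(iii) and Proposition~\ref{prop-closeness-GH}: the only real content is to upgrade the (a priori $n$-dependent) uniform-perfectness constant supplied by the former into one that is independent of $n$, and then the latter does the rest.

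First I would record the two diameter bounds that are in force. On one hand, ``diameters bounded below by $c_0$'' means that the ball $B(\Phi(x),\rho_0/L_0)$ of Definition~\ref{defin-qss} has diameter $\geq c_0$; since that ball is contained in $X_n$, we get $\mathrm{Diam}(X_n)\geq c_0$ for every $n$, and hence $\mathrm{Diam}(X_\infty)\geq c_0>0$, so the conclusion is not vacuous. On the other hand, since $(X_n,d_n)\underset{\text{GH}}{\longrightarrow}(X_\infty,d_\infty)$ with $X_\infty$ compact, the diameters converge, $\mathrm{Diam}(X_n)\to\mathrm{Diam}(X_\infty)<+\infty$, so there is a finite $D_1$ with $\mathrm{Diam}(X_n)\leq D_1$ for all $n$.

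Next I would apply Proposition~\ref{prop-perfect-implies-uniform}(iii) to each $X_n$: being $(L_0,\rho_0)$-q.s.s.\ with diameters bounded below by $c_0$, the space $X_n$ is $a_n$-uniformly perfect for some $0<a_n<1$. Inspecting the proof of that proposition, one sees that $a_n$ may be chosen as a non-increasing function of $\mathrm{Diam}(X_n)$ once $L_0,\rho_0,c_0$ are fixed (it behaves like a constant multiple of $1/\mathrm{Diam}(X_n)$ for large diameters, truncated below $1$). Combining this with $\mathrm{Diam}(X_n)\leq D_1$ produces a single $a_0=a_0(L_0,\rho_0,c_0,D_1)\in(0,1)$ such that \emph{every} $X_n$ is $a_0$-uniformly perfect.

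At this point $(X_n,d_n)$ is a sequence of compact, $a_0$-uniformly perfect, $(L_0,\rho_0)$-q.s.s.\ metric spaces converging in the Gromov--Hausdorff sense to $(X_\infty,d_\infty)$, so Proposition~\ref{prop-closeness-GH} applies verbatim and gives that $(X_\infty,d_\infty)$ is $a_0$-uniformly perfect, in particular uniformly perfect, which is the claim. The argument has no genuine obstacle; the one step I would be most careful about is precisely the passage from Proposition~\ref{prop-perfect-implies-uniform}(iii) — whose constant really does depend on the diameter of the space — to a constant uniform in $n$, which is why one must extract the uniform upper bound $\mathrm{Diam}(X_n)\leq D_1$ from the Gromov--Hausdorff convergence before quoting Proposition~\ref{prop-closeness-GH}.
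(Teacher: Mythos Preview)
Your argument is correct and is precisely the reasoning the paper intends: the remark in the paper is stated without proof, merely pointing to Proposition~\ref{prop-closeness-GH} and Proposition~\ref{prop-perfect-implies-uniform} (and the paragraph preceding them already notes that Gromov--Hausdorff convergence supplies the uniform diameter upper bound needed to make the constant in Proposition~\ref{prop-perfect-implies-uniform}(iii) uniform in $n$). You have simply spelled out those details.
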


We can now give the
%\begin{theo}
%	\label{theorem-upper-semicontinuity}
%	Let $(X_n,d_n)$ be a sequence of compact $(L_0,\rho_0)$-q.s.s. metric spaces that converges in the Gromov-Hausdorff sense to $(X_\infty, d_\infty)$. Then
%	\begin{itemize}
%		\item[(i)] $X_\infty$ is $(L_0,\rho_0)$-q.s.s.;
%		\item[(ii)] $\textup{CD}(X_\infty, d_\infty)\geq \limsup_{n\to +\infty} \textup{CD}(X_n, d_n)$.
%	\end{itemize}
%\end{theo}
\begin{proof}[Proof of Theorem \ref{theorem-main-intro}]
	We notice that since $X_\infty$ is compact then the diameters of $X_n$ are uniformly bounded above by some $\Delta_0 \geq 0$. We proceed in several steps. 	\\
%	\noindent \textbf{Step 1:} \emph{$X_\infty$ is $(L_0,\rho_0)$-q.s.s.}\\
%	We fix a point $x_\omega = \omega$-$\lim x_n \in X_\omega$ and $0<\rho \leq \rho_0$. For every $n$ we know it exists a $L_0$-biLipschitz map $\Phi_n \colon  \left(\overline{B}(x_n, \rho), \frac{\rho_0}{\rho}\cdot d_n\right) \to X_{n}$ such that $\Phi_n(\overline{B}(x_{n},\rho)) \supseteq B(\Phi_n(x_{n}),\frac{\rho_0}{L_0})$. Since the diameters are uniformly bounded then it is well defined the ultralimit map $\Phi_\omega \colon  \left(\overline{B}(x_\omega, \rho), \frac{\rho_0}{\rho}\cdot d_n\right) \to X_{\omega}$, see for instance \cite{DK18}, Lemma 10.48. The map $\Phi_\omega$ is clearly $L_0$-biLipschitz and satisfies $\Phi_\omega(B(x_{\omega},\rho)) \supseteq B(\Phi_\omega(x_{\omega}),\frac{\rho_0}{L_0})$.
%	\vspace{1mm}
%	
	\noindent \textbf{Step 1:} \emph{there exists $D_0 \geq 0$ such that $X_n$ is $D_0$-doubling for every $n$.}\\
	Suppose it is not true: then for every $j\in \mathbb{N}$ there exist $n_j$, $x_{n_j} \in X_{n_j}$ and $\rho_{n_j} > 0$ such that there is a $\frac{\rho_{n_j}}{2}$-separated set inside $B(x_{n_j},\rho_{n_j})$ of cardinality $\geq j$. Up to pass to a subsequence we can suppose $\lim \rho_{n_j} = \rho_\infty \in [0,+\infty)$. Clearly $X_\infty$ is not totally bounded when $\rho_\infty > 0$, and this is impossible since $X_\infty$ is compact. If $\rho_\infty = 0$ we use the quasi-selfsimilarity to get $L_0$-biLipschitz maps $\Phi_j\colon \left(B(x_{n_j}, \rho_{n_j}), \frac{\rho_0}{\rho_{n_j}}\cdot d_{n_j}\right) \to X_{n_j}$ such that $\Phi_j(B(x_{n_j},\rho_{n_j})) \supseteq B(\Phi_j(x_{n_j}),\frac{\rho_0}{L_0})$ for every $j$ for which $\rho_{n_j} \leq \rho_0$. Hence we can find a $\frac{\rho_0}{2L_0}$-separated set inside $B(\Phi_j(x_{n_j}), \frac{\rho_0}{L_0})$ with cardinality $\geq j$. Once again this contradicts the compactness of $X_\infty$.
	\vspace{2mm}
	
	\noindent In order to simplify the notations we fix a non-principal ultrafilter $\omega$ and we call $X_\omega$ the ultralimit space, which is isometric to $X_\infty$ by Proposition \ref{prop-GH-ultralimit}.
	\vspace{1mm}
	
	\noindent \textbf{Step 2:} \emph{Let $k\in \mathbb{N}$. We fix a maximal $10^{-k}$-separated subset $X_{k,n}$ of $X_n$. Then:\vspace{-3mm}
		\begin{itemize}
			\item[(i)] the cardinality of $X_{k,n}$ is uniformly bounded from above, and each $X_{k,n}$ is a $10^{-k}$-net of $X_n$;
			\item[(ii)] the set $X_{k,\omega} := \lbrace \omega\textup{-}\lim q_n \textup{ s.t. } q_n \in X_{k,n}\rbrace$ is a $10^{-k}$-net of $X_\omega$;
			\item[(iii)] there exists $A_k\subseteq \mathbb{N}$, $\omega(A_k)=1$, such that the function $\pi_n \colon X_{k,\omega} \to X_{k,n}$, $\pi_n(\omega\textup{-}\lim q_n) := q_n$ is well defined and bijective for all $n\in A_k$.
	\end{itemize} }
	\noindent By Step 1 we know that each $X_n$ is $D_0$-doubling, therefore the cardinality of $X_{k,n}$ is uniformly bounded above in terms of $D_0$ and $k$. The second statement of (i) has been explained in Section \ref{sec-basic}.\\
	We take two points $\omega$-$\lim q_n$, $\omega$-$\lim q_n' \in X_{k,\omega}$. If $d_\omega(\omega\text{-}\lim q_n,\omega\text{-}\lim q_n') < 10^{-k}$ then $d_n(q_n,q_n')<10^{-k}$ $\omega$-a.s. and by definition $q_n = q_n'$ $\omega$-a.s., implying $\omega\text{-}\lim q_n = \omega\text{-}\lim q_n'$. Since $X_\omega$ is compact we conclude that the set $X_{k,\omega}$ is finite and that $\pi_n$ is well defined $\omega$-a.s.\\	
	We suppose $X_{k,\omega}$ is not a $10^{-k}$-net of $X_\omega$. Therefore we can find $y_\omega = \omega$-$\lim y_n \in X_\omega$ such that $d_\omega(y_\omega, q_\omega) > 10^{-k}$ for all $q_\omega \in X_{k,\omega}$. Since $X_{k,\omega}$ is finite we know that $d_n(y_n, q_n) > 10^{-k}$ for all $q_n\in X_{k,n}$, $\omega$-a.s. This contradicts the fact that $X_{k,n}$ is a $10^{-k}$-net for every $n$, so also $X_{k,\omega}$ is a $10^{-k}$-net of $X_\omega$.\\
	Since
	$$\vert d_\omega(q_\omega, q_\omega') - d_n(q_n, q_n')\vert < \frac{10^{-k}}{2}$$
	for all $q_\omega = \omega\text{-}\lim q_n, q_\omega' = \omega\text{-}\lim q_n' \in X_{k,\omega}$ and for $\omega$-a.e.$(n)$ we conclude that $\pi_n$ is injective $\omega$-a.s. Finally suppose $\pi_n$ is not surjective $\omega$-a.s. Then it is possible to find a set $A\in \omega$ such that for every $n\in A$ there exists $q_n\in X_{k,n}$ which is not in the image of $\pi_n$. In this case we consider the point $\omega$-$\lim q_n$ that belongs to $X_{k,\omega}$, finding a contradiction. This ends the proof of (iii).
	\vspace{2mm}
	
	\noindent Since we have fixed $10^{-k}$-nets $X_{k,n}$ and $X_{k,\omega}$ of $X_n$ and $X_\omega$ respectively, every path will be intended with respect to these sets.
	\vspace{1mm}
	
	\noindent \textbf{Step 3:} \emph{Let $k\in\mathbb{N}$. There exists a subset $B_k\subseteq \mathbb{N}$ of $\omega$-measure $1$ such that
	\begin{itemize}
		\item[(i)] the map $\pi_n\colon X_{k,\omega} \to X_{k,n}$ from Step 2 is well defined and bijective for every $n\in B_k$;
		\item[(ii)] for every $n\in B_k$, for every $(10,k)$-path $\gamma_n = \lbrace q_j^n\rbrace_{j=0}^M$ of $X_n$ the associated path $\gamma_\omega = \lbrace \pi_n^{-1}(q_j^n)\rbrace_{j=0}^M$ is a $(30,k)$-path of $X_\omega$.
	\end{itemize}}
	\noindent Since $X_{k,\omega}$ is finite we can find a subset $B_k$ of $A_k$ with $\omega$-measure $1$ such that 
	\begin{equation*}
%		\label{eq-limit-estimate}
		\vert d_\omega(q_\omega, q_\omega') - d_n(\pi_n(q_\omega), \pi_n(q_\omega')) \vert\leq 10\cdot 10^{-k}
	\end{equation*}
	for all $q_\omega,q_\omega' \in X_{k,\omega}$ and for all $n\in B_k$.
	Let us take a $(10,k)$-path $\gamma_n = \lbrace q_j^n\rbrace_{j=0}^M$ of $X_n$, $n\in B_k$. This means $d_n(q_j^n, q_{j+1}^n) \leq 20\cdot 10^{-k}$ for all $j=0,\ldots,M-1$.
	Therefore $d_\omega(\pi_n^{-1}(q_j^n), \pi_n^{-1}(q_{j+1}^n)) \leq 30\cdot 10^{-k}$, i.e. the thesis.
%	\vspace{1mm}
%	
%	\noindent Observe that for $n \in B_k$ the estimate \eqref{eq-limit-estimate} holds true.
	\vspace{2mm}
	
	\noindent \textbf{Step 4:} \emph{Let $i,k\in\mathbb{N}$ and $p\geq 0$. Then $p\textup{-Mod}_{30,k}^{\frac{13}{4},\frac{15}{4}}(y_\omega) \geq \omega\textup{-}\lim p\textup{-Mod}_k(y_n)$ for every $y_\omega = \omega$-$\lim y_n \in X_{i,\omega}$.}
	\vspace{1mm}
	
	\noindent We apply Step 3 to the integer $i+k$ finding $B_{i+k}\subseteq \mathbb{N}$, $\omega(B_{i+k}) = 1$, and bijective maps $\pi_n \colon X_{i+k,\omega} \to X_{i+k,n}$ for all $n\in B_{i+k}$.
	We take an optimal function $f_\omega \in \mathcal{A}_{30,i+k}(\overline{B}_{13/4,i}(y_\omega), X_\omega \setminus B_{15/4,i}(y_\omega))$. By definition $f_\omega$ maps points of $X_{i+k,\omega}$ to $[0,+\infty)$. For all $n\in B_{i+k}$ we define the functions $f_n\colon X_{i+k,n} \to [0,+\infty)$ by $f_n(q) = f_\omega(\pi_n^{-1}(q))$. We find another subset $C_{i+k,y_\omega}\subseteq B_{i+k}$ of $\omega$-measure $1$ such that 
	\begin{equation*}
		\vert d_\omega(q_\omega, y_\omega) - d_n(\pi_n(q_n), y_n) \vert\leq \frac{1}{4}\cdot 10^{-i}
	\end{equation*}
	for all $q_\omega \in X_{i+k,\omega}$ and for all $n\in C_{i+k,y_\omega}$.
	We want to check that $f_n\in \mathcal{A}_{10,i+k}(\overline{B}_{3,i}(y_n), X_n \setminus B_{4,i}(y_n))$ for all $n\in C_{i+k,y_\omega}$. We fix $n\in C_{i+k,y_\omega}$ and we take a $(10,i+k)$-path $\gamma_n = \lbrace q_j^n\rbrace_{j=0}^M$ such that $d_n(y_n, q_0^n) \leq 3\cdot 10^{-i}$ and $d_n(y_n,q_M^n)> 4\cdot 10^{-i}$. We denote by $\gamma_\omega = \lbrace \pi_n^{-1}(q_j^n)\rbrace$ the $(30,i+k)$-path given by Step 3.	
	We observe that $d_\omega(y_\omega, \pi_n^{-1}(q_0^n)) \leq \frac{13}{4}\cdot 10^{-i}$ and $d_\omega(y_\omega,q_M^\omega)> \frac{15}{4}\cdot 10^{-i}$, i.e. the $(30,i+k)$-path $\gamma_\omega$ joins $\overline{B}_{13/4,i}(y_\omega)$ and $X_\omega \setminus B_{15/4,i}(y_\omega)$. By definition of $f_n$ we get
	$$\sum_{j=0}^{M} f_n(q_j^n) = \sum_{j=0}^M f_\omega(\pi_n^{-1}(q_j^n)) \geq 1.$$
	Moreover it holds
	$$p\text{-Mod}_k(y_n) \leq \sum_{q\in X_{i+k,n}}f_n^p(q) = \sum_{q\in X_{i+k,\omega}}f_\omega^p(\pi^{-1}_n(q)) = p\text{-Mod}_{30,k}^{\frac{13}{4},\frac{15}{4}}(y_\omega).$$
	Since this is true for all $n\in C_{i+k,y_\omega}$ we get 
	$$p\textup{-Mod}_{30,k}^{\frac{13}{4},\frac{15}{4}}(y_\omega) \geq \omega\textup{-}\lim p\textup{-Mod}_k(y_n).$$
	\vspace{1mm}
	
	\noindent \textbf{Step 5:} \emph{Conclusion.}\\
	We fix $k\in \mathbb{N}$ and $0\leq p < \omega$-$\lim \text{CD}(X_n,d_n)$. 
	%The set of points $y_\omega \in X_{i,\omega}$ with $0\leq i \leq n_0$ is finite, therefore the conclusion of Step 4 hold simultaneously for all these points. 
	By Proposition \ref{prop-uniform-lower-bound-p-modulus} we find a constant $\lambda_0 > 0$ depending only on $D_0$, $L_0$ and $p$ such that
	\begin{equation}
		\label{eq-uniform}
		\sup_{i\leq n_0}\sup_{y\in X_{i,n}}p\text{-Mod}_k(y) \geq \lambda_0
	\end{equation}
	for $\omega$-a.e.$(n)$. For all these $n$'s we take a point $y_n \in X_{i_n,n}$, $1\leq i_n \leq n_0$, realizing the supremum in \eqref{eq-uniform}. The sequence $i_n$ is $\omega$-a.s. equal to some $i_* \in \lbrace 1,\ldots,n_0\rbrace$. So the limit point $y_\omega$ belongs to $X_{i_*,\omega}$. By Step 4 we get
	$$p\textup{-Mod}_{30,k}^{\frac{13}{4},\frac{15}{4}}(X_\omega) \geq p\textup{-Mod}_{30,k}^{\frac{13}{4},\frac{15}{4}}(y_\omega) \geq \omega\textup{-}\lim p\textup{-Mod}_k(y_n) \geq \lambda_0.$$
	Applying the easy inequalities of Lemma \ref{lemma-rescaling} and Lemma \ref{lemma-lambda-rescaling} we conclude that $p\textup{-Mod}_k(X_\omega) \geq \lambda_0$	for every $k$. This shows $\liminf_{k\to+\infty}p\textup{-Mod}_k(X_\omega) > 0$.
	$X_\omega$ is a compact, doubling and uniformly perfect metric space by Proposition \ref{prop-closeness-GH} and Proposition \ref{prop-perfect-implies-uniform}, so we can apply Theorem \ref{theo-CD-modulus} to get $p\leq \text{CD}(X_\omega, d_\omega)$. In conclusion we proved 
	$$\omega\text{-}\lim \text{CD}(X_n,d_n) \leq \text{CD}(X_\omega,d_\omega).$$
	This inequality is true for every non-principal ultrafilter $\omega$, so by Lemma 6.3 of \cite{Cav21ter} we have 
	$\limsup_{n\to +\infty} \text{CD}(X_n,d_n) \leq \text{CD}(X_\infty,d_\infty).$
\end{proof}

The main tools used in this proof are: the reduction of the computation of the combinatorial modulus to a \emph{finite} set of scales and the uniform lower bound on the combinatorial modulus, independent of $k$, given by Proposition \ref{prop-uniform-lower-bound-p-modulus}. The approach to the lower semicontinuity problem is more difficult because from one side it is still possible to reduce the computation to a finite set of scales, but from the other side there is no more any control on the behaviour of the $p$-modulus, independent of $k$. If we take some $p > \text{CD}(X_n, d_n)$ for every $n$ then by Theorem \ref{theo-CD-modulus} it holds $\liminf_{k\to+\infty}p\text{-Mod}_k(X_n) = 0$. But a priori it is not possible to conclude that $\liminf_{k\to+\infty}p\text{-Mod}_k(X_\infty) = 0$. Indeed for given $\varepsilon > 0$ we cannot control the threshold $k_\varepsilon$ such that $p\text{-Mod}_k(X_n) < \varepsilon$ for $k\geq k_\varepsilon$. Clearly if we have this kind of uniform control on the spaces $X_n$ then the Ahlfors regular conformal dimension of $X_\infty$ is equal to the limit of the Ahlfors regular conformal dimensions of $X_n$. \\
Then Question \ref{question-continuity-general} can be rephrased in the following way: are there (interesting) geometric conditions on a quasi-selfsimilar space that gives a uniform control on the thresholds $k_\varepsilon$ defined above?\\
This question seems to be related to (uniform) weak super-multiplicative properties of the sequence $p$-Mod$_k(X)$, as studied in relation with the combinatorial Lowner property in \cite[§4, §8]{BK13}, and in the special case of the Sierpinski carpet in \cite[Theorem 1.3]{Kwa20}. This weak super-multiplicative property seems to hold true only for spaces in which curves are uniformly distributed in some sense, as suggested by the arguments used again in \cite[Lemma 4.3 and Lemma 8.1]{BK13}. 
This observation gives a possible approach to the question presented in the introduction in case of spaces satisfying a uniform combinatorial Lowner property.

\section{Gromov-hyperbolic spaces}
\label{sec-Gromov-application}
In this second part of the paper we prove Theorem \ref{theorem-main-introduction}. We briefly recall the definition of Gromov-hyperbolic metric spaces. Good references are for instance \cite{BH09} and \cite{CDP90}. Let $X$ be a metric space. Given  three points  $x,y,z \in X$,  the {\em Gromov product} of $y$ and $z$ with respect to $x$  is
\vspace{-3mm}

$$(y,z)_x = \frac{1}{2}\big( d(x,y) + d(x,z) - d(y,z) \big).$$

\noindent The space $X$ is said {\em $\delta$-hyperbolic}, $\delta \geq 0$, if   for every four points $x,y,z,w \in X$   the following {\em 4-points condition} hold:
\vspace{-2mm}
\begin{equation}\label{hyperbolicity}
	(x,z)_w \geq \min\lbrace (x,y)_w, (y,z)_w \rbrace -  \delta,
	\vspace{-2mm}
\end{equation}
or, equivalently,
\vspace{-2mm}
\begin{equation}
	\label{four-points-condition}
	d(x,y) + d(z,w) \leq \max \lbrace d(x,z) + d(y,w), d(x,w) + d(y,z) \rbrace + 2\delta. 
	\vspace{-2mm}
\end{equation}

\noindent The space $X$ is   {\em Gromov hyperbolic} if it is $\delta$-hyperbolic for some $\delta \geq 0$.

\noindent Let $X$ be a $\delta$-hyperbolic metric space and $x$ be a point of $X$. 
The {\em Gromov boundary} of $X$ is defined as the quotient 
\vspace{-2mm}
$$\partial X = \lbrace (z_n)_{n \in \mathbb{N}} \subseteq X \hspace{1mm} | \hspace{1mm}   \lim_{n,m \to +\infty} (z_n,z_m)_{x} = + \infty \rbrace \hspace{1mm} /_\approx, \vspace{-3mm}$$
where $(z_n)_{n \in \mathbb{N}}$ is a sequence of points in $X$ and $\approx$ is the equivalence relation defined by $(z_n)_{n \in \mathbb{N}} \approx (z_n')_{n \in \mathbb{N}}$ if and only if $\lim_{n,m \to +\infty} (z_n,z_m')_{x} = + \infty$.  \linebreak
We will write $ z = [(z_n)] \in \partial X$ for short, and we say that $(z_n)$ {\em converges} to $z$. This definition  does not depend on the basepoint $x$. There is a natural topology on $X\cup \partial X$ that extends the metric topology of $X$. 
The Gromov product can be extended to points $z,z'  \in \partial X$ by 
\vspace{-2mm}
$$(z,z')_{x} = \sup_{(z_n) , (z_n') } \liminf_{n,m \to + \infty} (z_n, z_m')_{x} \vspace{-3mm}$$
where the supremum is taken among all sequences such that $(z_n) \in z$ and $(z_n')\in z'$.
For every $z,z',z'' \in \partial X$ it continues to hold
\vspace{-2mm}
\begin{equation}
	\label{hyperbolicity-boundary}
	(z,z')_{x} \geq \min\lbrace (z,z'')_{x}, (z',z'')_{x} \rbrace - \delta.
	\vspace{-3mm}
\end{equation}
Moreover for all sequences $(z_n),(z_n')$ converging to  $z,z'$ respectively it holds
\vspace{-2mm}
\begin{equation}
	\label{product-boundary-property}
	(z,z')_{x} -\delta \leq \liminf_{n,m \to + \infty} (z_n,z_m')_{x} \leq (z,z')_{x}.
	\vspace{-3mm}
\end{equation}
The Gromov product between a point $y\in X$ and a point $z\in \partial X$ is defined in a similar way and it  satisfies a condition analogue of  \eqref{product-boundary-property}.\\
The boundary of a $\delta$-hyperbolic metric space is metrizable. A metric $D_{x,a}$ on $\partial X$ is called a {\em visual metric} of center $x \in X$ and parameter $a\in\left(0,\frac{1}{2\delta\cdot\log_2e}\right)$ if there exists $V> 0$ such that for all $z,z' \in \partial X$ it holds
\begin{equation}
	\label{visual-metric}
	\frac{1}{V}e^{-a(z,z')_{x}}\leq D_{x,a}(z,z')\leq V e^{-a(z,z')_{x}}.
\end{equation}
A visual metric is said {\em standard} if for all $z,z'\in \partial X$ it holds
\begin{equation}
	\label{eq-new-visual}
	(3-2e^{a\delta})e^{-a(z,z')_{x}}\leq D_{x,a}(z,z')\leq e^{-a(z,z')_{x}}.
\end{equation}
For all $a$ as before and $x\in X$ there exists always a standard visual metric of center $x$ and parameter $a$ (cp. \cite{Pau96}, \cite{BS11}). Any two different visual metrics are quasisymmetric equivalent, and the quasisymmetric homeomorphism is the identity (\cite[Lemma 6.1]{BS11}). This defines a well defined quasisymmetric gauge on $\partial X$, that we denote by $\mathcal{J}(\partial X)$. If $C$ is a subset of $\partial X$ then the restriction of two visual metrics on $C$ define again two quasisymmetric distances, so it is well defined the quasisymmetric gauge $\mathcal{J}(C)$.
\vspace{1mm}

\noindent We will deal with proper metric spaces, i.e. spaces in which every closed ball is compact. A metric space $X$ is $K$-almost geodesic if for all $x,y\in X$, for all $t\in [0,d(x,y)]$ there exists $z\in X$ such that $\vert d(x,z) - t\vert \leq K$ and $\vert d(y,z) - (d(x,y) - t) \vert \leq K$. If we do not need to specify the value of $K$ we simply say that $X$ is almost geodesic. A metric space is geodesic if it is $0$-almost geodesic. 
Let $X$ be a proper, geodesic, Gromov-hyperbolic metric space.
Every geodesic ray $\xi$ defines a point  $\xi^+=[(\xi(n))_{n \in \mathbb{N}}]$  of the Gromov boundary $ \partial X$. Moreover for every $z\in \partial X$ and every $x\in X$ it is possible to find a geodesic ray $\xi_{x,z}$ such that $\xi(0)=x$ and $\xi^+ = z$. Analogously, given different points $z = [(z_n)], z' = [(z'_n)] \in \partial X$ there exists  a geodesic line $\gamma$ joining $z$ to $z'$, i.e. such that  $\gamma|_{[0, +\infty)}$ and $\gamma|_{(-\infty,0]}$ join   $\gamma(0)$ to $z,z'$ respectively. We call $z$ and $z'$ the  {\em positive} and {\em negative endpoints} of $\gamma$, respectively,  denoted  $\gamma^\pm$. \\
\noindent The {\em quasiconvex hull} of a subset $C$ of $\partial X$ is the union of all the geodesic lines joining two points of $C$ and it is denoted by QC-Hull$(C)$. If $X$ is proper and geodesic and $C$ has more than one point then QC-Hull$(C)$ is non-empty by the discussion above. We can say more.

\begin{prop}
	\label{prop-convex-hull-boundary}
	Let $X$ be a proper, geodesic, $\delta$-hyperbolic metric space and let $C\subseteq \partial X$ be a closed subset with at least two points. Then $\textup{QC-Hull}(C)$ is proper, $36\delta$-almost geodesic and $\delta$-hyperbolic. Moreover $\mathcal{J}(\partial \textup{QC-Hull}(C)) \cong  \mathcal{J}(C)$, in the sense that there exists a homeomorphism $F\colon C \to \partial \textup{QC-Hull}(C)$ which is a quasisymmetric equivalence when we equip $C$ and $\partial \textup{QC-Hull}(C)$ with every metrics in the gauges $\mathcal{J}(C)$ and $\mathcal{J}(\partial \textup{QC-Hull}(C))$, respectively.
\end{prop}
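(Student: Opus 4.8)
The plan is to handle the three ``metric'' assertions first, since they are soft, and then to spend the real effort on the quasisymmetry. Hyperbolicity is immediate: $\textup{QC-Hull}(C)$ carries the restriction of $d$, and the four-points inequality \eqref{four-points-condition} only involves distances, so it is inherited with the same $\delta$. For properness it suffices to prove that $\textup{QC-Hull}(C)$ is closed in $X$, a closed subset of a proper space being proper. So I would take $x_n\in\textup{QC-Hull}(C)$ with $x_n\to x\in X$, write $x_n=\gamma_n(0)$ for geodesic lines $\gamma_n$ with endpoints $a_n,b_n\in C$, and use properness of $X$ (Arzel\`a--Ascoli) to pass to a subsequence with $\gamma_n\to\gamma$ uniformly on compact sets, where $\gamma$ is a geodesic line through $x$; since $X\cup\partial X$ is compact we may also assume $a_n\to a$, $b_n\to b$ in $X\cup\partial X$, and a routine basepoint-change estimate gives $a=\gamma^+$, $b=\gamma^-$. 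A bi-infinite geodesic cannot have both endpoints equal to the same $z\in\partial X$ — otherwise $(\gamma(n),\gamma(-n))_{\gamma(0)}=0$ would contradict $(z,z)_{\gamma(0)}=+\infty$ through \eqref{product-boundary-property} — so $a\ne b$, and as $C$ is closed we get $a,b\in C$ and $x\in\gamma\subseteq\textup{QC-Hull}(C)$.

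For the almost-geodesic property the key point is that $\textup{QC-Hull}(C)$ is $Q$-quasiconvex in $X$ with $Q=O(\delta)$. Given $p,q\in\textup{QC-Hull}(C)$ on geodesic lines with endpoints $a,b\in C$ and $c,d\in C$, and a point $r$ on an $X$-geodesic $[p,q]$, I would apply the thinness of ideal geodesic triangles twice: first to the triangle $(a,p,q)$, to push $r$ onto $[p,a]\cup[q,a]$, and then, in the case where $r$ landed near $[q,a]$, to the triangle $(a,q,c)$, to push that point onto the geodesic line $[a,c]$ or onto the sub-ray $[q,c]$ of $\gamma_q$; in every case $r$ ends up within $O(\delta)$ of a geodesic line between two points of $C$, hence of $\textup{QC-Hull}(C)$ (the degenerate configurations, such as $a\in\{c,d\}$, are easier, since then $[q,a]$ is already a sub-ray of $\gamma_q$). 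Keeping track of the thin-triangle constants should yield exactly $Q=36\delta$; then, for $t\in[0,d(p,q)]$, choosing $r$ on $[p,q]$ with $d(p,r)=t$ and $r'\in\textup{QC-Hull}(C)$ with $d(r,r')\le 36\delta$ gives the $36\delta$-almost-geodesic inequality directly.

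For the quasisymmetry I would first reduce: by \cite[Lemma 6.1]{BS11}, any two visual metrics on $\partial X$ (resp.\ on $\partial\textup{QC-Hull}(C)$) are quasisymmetric through the identity, including across different parameters, where the identity is a snowflake power map; so it is enough to exhibit, for one convenient pair of standard visual metrics centered at a common point $o\in\textup{QC-Hull}(C)$, a homeomorphism $F\colon C\to\partial\textup{QC-Hull}(C)$ that is a quasisymmetry, the general statement following by composing with identity maps. I would define $F(z)$ to be the endpoint ``at $z$'' in $\partial\textup{QC-Hull}(C)$ of the geodesic line joining $z$ to any other point of $C$ (the $X$-geodesic, which lies in $\textup{QC-Hull}(C)$ and is still geodesic there). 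This is well defined and injective because $\textup{QC-Hull}(C)$ carries the restricted metric, so all Gromov products computed there equal those computed in $X$; surjectivity reduces to showing any Gromov sequence $x_n\in\textup{QC-Hull}(C)$ converges in $\partial X$ to a point of $C$, which I would see by writing $x_n$ on $[a_n,b_n]$ with $a_n,b_n\in C$, using $(a_n,b_n)_{x_n}\le\delta$ together with \eqref{hyperbolicity-boundary} to get $(o,a_n)_{x_n}\le 2\delta$ after passing to a subsequence, hence $(x_n,a_n)_o=d(o,x_n)-(o,a_n)_{x_n}+O(\delta)\to+\infty$, hence $a_n\to z:=\lim x_n$ by \eqref{hyperbolicity-boundary}, and $z\in\overline C=C$. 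Finally, for $z,z'\in C$ the geodesic line $\gamma_{z,z'}$ lies in $\textup{QC-Hull}(C)$ with endpoints $F(z),F(z')$; applying the standard estimate ``the Gromov product of two ideal points is, up to an additive $O(\delta)$, the distance from the basepoint to a geodesic joining them'' — valid in both $X$ and in the proper, $36\delta$-almost geodesic, $\delta$-hyperbolic space $\textup{QC-Hull}(C)$, and evaluated on the same $\gamma_{z,z'}$ and the same basepoint $o$ — shows that $(F(z),F(z'))_o$ computed in $\textup{QC-Hull}(C)$ and $(z,z')_o$ computed in $X$ differ by $O(\delta)$. Passing to visual metrics, this makes $D_{\partial\textup{QC-Hull}(C)}(F(z),F(z'))$ and $D_{\partial X}(z,z')$ comparable up to a bounded multiplicative constant and a fixed power, so $F$ is bi-H\"older, in particular a homeomorphism, and a quasisymmetry.

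I expect the genuinely delicate parts to be twofold. First, the conceptual heart — that Gromov products computed inside and outside $\textup{QC-Hull}(C)$ agree up to $O(\delta)$ because the connecting geodesics already live in $\textup{QC-Hull}(C)$ — is short once the setup is in place; but the quantitative thin-triangle bookkeeping that produces the \emph{explicit} constant $36\delta$ for the quasiconvexity/almost-geodesicity step, carefully treating the degenerate endpoint configurations, will require attention. Second, some care is needed to run the boundary machinery (existence and limits of geodesic lines, the Gromov-product/distance-to-geodesic estimate, compactness of $\partial\textup{QC-Hull}(C)$) on $\textup{QC-Hull}(C)$, which is only almost geodesic rather than geodesic; this is where I would be most careful to invoke the right versions of the standard hyperbolic-boundary facts.
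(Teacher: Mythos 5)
Your proposal is correct in substance and follows the same overall skeleton as the paper (hyperbolicity inherited because the four-point condition only involves distances; quasiconvexity of the hull giving almost-geodesicity and properness; a boundary map built from geodesics lying in $\textup{QC-Hull}(C)$; a Gromov-product comparison; surjectivity by showing a Gromov sequence in the hull has its lines' endpoints converging to a point of the closed set $C$), but several steps are implemented differently. The paper simply cites the closedness and $36\delta$-quasiconvexity of $\textup{QC-Hull}(C)$ from \cite[Lemma 4.5]{Cav21ter}, whereas you re-derive closedness by Arzel\`a--Ascoli and sketch the quasiconvexity via ideal-triangle thinness; your sketch is conceptually sound, but the \emph{exact} constant $36\delta$ claimed in the statement is asserted (``should yield exactly $36\delta$'') rather than verified, which is the one place your write-up falls short of the stated result as opposed to a result with some $O(\delta)$ constant. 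The paper defines $F$ using the $14\delta$-approximation of the ray $\xi_{x,z}$ by points of the hull (Lemma \ref{approximation-ray-line}); you instead use points along a geodesic line with both endpoints in $C$, which lies exactly in the hull and makes that lemma unnecessary --- a small simplification. For the quasisymmetry the paper compares products directly through \eqref{product-boundary-property}, obtaining the sharp bound $(z,z')_x-\delta\leq(F(z),F(z'))_x\leq(z,z')_x$; you route through the ``product $\approx$ distance to a connecting geodesic'' estimate in both spaces, which works here because the connecting geodesic lies in the hull and that estimate only needs the four-point condition along a geodesic, but it yields an $O(\delta)$ comparison and obliges you to worry (as you do) about running boundary facts in a merely almost-geodesic space; the paper's route avoids this entirely. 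Your surjectivity argument (product identity $(a_n,x_n)_o\approx d(o,x_n)-(a_n,o)_{x_n}$ after relabeling endpoints) is a clean direct alternative to the paper's contradiction argument via the distance-to-segment lemma and the four-point condition. One phrasing caution: ``bi-H\"older'' alone does not imply quasisymmetric --- what your estimate actually gives is snowflake-plus-bi-Lipschitz comparability of the visual metrics (a single fixed power with uniform multiplicative constants), and it is that form which yields the quasisymmetry.
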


We need the following approximation result.
\begin{lemma}[\cite{Cav21ter}, Lemma 4.6]
	\label{approximation-ray-line}
	Let $X$ be a proper, geodesic, $\delta$-hyperbolic metric space. Let $C\subseteq \partial X$ be a subset with at least two points and $x\in \textup{QC-Hull}(C)$. Then $d(\xi_{x,z}, \textup{QC-Hull}(C)) \leq 14\delta$ for all $z\in C$.
\end{lemma}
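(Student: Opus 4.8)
The plan is to realise $\xi_{x,z}$ as one side of a geodesic triangle, with one finite and two ideal vertices, whose remaining two sides lie in $\textup{QC-Hull}(C)$, and then to invoke the uniform slimness of such triangles in a $\delta$-hyperbolic space. (I read the conclusion as $\xi_{x,z}\subseteq N_{14\delta}\big(\textup{QC-Hull}(C)\big)$, i.e. every point of the ray is at distance at most $14\delta$ from the hull.) Since $x\in\textup{QC-Hull}(C)$, there is a geodesic line $\gamma$ with $\gamma(0)=x$ whose endpoints $w^{\pm}=\gamma^{\pm}$ lie in $C$ (using that $C$ is closed); by definition $\gamma$, and more generally every geodesic line with both endpoints in $C$, is contained in $\textup{QC-Hull}(C)$. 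I would pick $w\in\{w^{+},w^{-}\}$ with $w\neq z$ — possible since $w^{+}\neq w^{-}$ — and let $\eta\subseteq\gamma$ be the sub-ray from $x$ to $w$, so that $\eta\subseteq\textup{QC-Hull}(C)$. It then suffices to produce a geodesic line $\sigma$ from $w$ to $z$ with $\xi_{x,z}\subseteq N_{14\delta}(\eta\cup\sigma)$, since any such $\sigma$ also lies in $\textup{QC-Hull}(C)$; equivalently, the geodesic triangle with vertices the point $x$ and the boundary points $w,z$, and sides $\eta,\xi_{x,z},\sigma$, should be $14\delta$-slim.

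To establish this I would argue by approximation. The $4$-point condition \eqref{four-points-condition} (equivalently \eqref{hyperbolicity}) implies that every geodesic triangle in $X$ with finite vertices is $\kappa\delta$-slim for a small universal $\kappa$: each side lies in the $\kappa\delta$-neighbourhood of the union of the other two. Choose points $w_{n}\to w$ running out along $\eta$ and $z_{n}\to z$ running out along $\xi_{x,z}$, so that the geodesic sub-segments $[x,w_{n}]\subseteq\eta$ and $[x,z_{n}]\subseteq\xi_{x,z}$ exhaust $\eta$ and $\xi_{x,z}$. Since $w\neq z$ the Gromov product $(w,z)_{x}$ is finite, and a standard consequence of hyperbolicity (using \eqref{product-boundary-property} and \eqref{hyperbolicity-boundary}) bounds the distance from $x$ to $[w_{n},z_{n}]$ in terms of $(w,z)_{x}$ alone; as $X$ is proper, after passing to a subsequence the segments $[w_{n},z_{n}]$ converge uniformly on compact sets to a geodesic line $\sigma$ from $w$ to $z$, which therefore lies in $\textup{QC-Hull}(C)$. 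Now apply $\kappa\delta$-slimness to the finite triangles with vertices $x,w_{n},z_{n}$ and sides $[x,w_{n}]\subseteq\eta$, $[x,z_{n}]\subseteq\xi_{x,z}$, $[w_{n},z_{n}]$: for fixed $t$ and $n$ large, $\xi_{x,z}(t)$ lies within $\kappa\delta$ of $\eta\cup[w_{n},z_{n}]$, and letting $n\to\infty$ — extracting a limit of the witnesses inside the compact ball $\overline{B}(\xi_{x,z}(t),\kappa\delta)$ — gives $d\big(\xi_{x,z}(t),\eta\cup\sigma\big)\leq\kappa\delta$. Hence $\xi_{x,z}\subseteq N_{\kappa\delta}(\eta\cup\sigma)\subseteq N_{\kappa\delta}\big(\textup{QC-Hull}(C)\big)$ with $\kappa$ universal.

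I do not expect a serious obstacle here: the only mildly delicate point is the constant, namely checking that $\kappa\delta\leq 14\delta$ for the normalisation of \eqref{four-points-condition} in use and making sure no avoidable slack creeps in at the passage to the ideal limits — in fact the argument above gives a bound appreciably smaller than $14\delta$, which is safely admissible. A variant that sidesteps the limiting argument altogether is to estimate $d\big(\xi_{x,z}(t),\sigma\big)$ directly via the Gromov products $(w,z)_{\xi_{x,z}(t)}$ and $(z,w^{\mp})_{x}$, comparing products at the two basepoints through \eqref{hyperbolicity-boundary} and using $(w^{+},w^{-})_{x}\leq\delta$ (which holds because $x$ lies on the geodesic line $\gamma$ joining $w^{+}$ to $w^{-}$); this produces the same bound, with the constant the only point requiring attention.
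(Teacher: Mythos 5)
Your proposal is correct. Note that the paper itself does not prove this lemma --- it is imported verbatim from \cite{Cav21ter} --- so there is no internal proof to compare against; judged on its own, your argument is a sound, self-contained proof of the statement in the form in which the paper actually uses it (every point $\xi_{x,z}(t)$ lies within $14\delta$ of $\textup{QC-Hull}(C)$). The structure is the natural one: since $x\in\textup{QC-Hull}(C)$ it lies, \emph{by definition of the hull}, on a geodesic line $\gamma$ with endpoints $w^{\pm}\in C$ (your parenthetical ``using that $C$ is closed'' is unnecessary and should be dropped, since $C$ is not assumed closed in the statement); choosing $w\in\{w^+,w^-\}$ with $w\neq z$, the sub-ray $\eta\subseteq\gamma$ and any geodesic line $\sigma$ from $w$ to $z$ lie in the hull, and the ideal triangle with sides $\eta$, $\xi_{x,z}$, $\sigma$ is uniformly slim. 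Your limiting argument is carried out correctly: the four-point condition \eqref{four-points-condition} gives $4\delta$-slimness of finite geodesic triangles, the uniform bound on $d(x,[w_n,z_n])$ via $(w,z)_x<\infty$ together with properness yields subconvergence of $[w_n,z_n]$ to a geodesic line joining $w$ to $z$, and extracting limits of the slimness witnesses in $\overline{B}(\xi_{x,z}(t),4\delta)$ loses nothing, so you in fact obtain the stronger bound $4\delta\leq 14\delta$ (the constant $14\delta$ in the quoted statement reflects the bookkeeping of \cite{Cav21ter}, which works with explicit Gromov-product estimates of the kind appearing in Lemmas \ref{product-rays} and \ref{parallel-geodesics}, much as in the alternative you sketch at the end). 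No gap.
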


\begin{proof}[Proof of Proposition \ref{prop-convex-hull-boundary}]
	QC-Hull$(C)$ is closed and $36\delta$-quasiconvex (\cite[Lemma 4.5]{Cav21ter}), i.e. every point of every geodesic segment joining every two points $y,y'$ of $\text{QC-Hull}(C)$ is at distance at most $36\delta$ from $\text{QC-Hull}(C)$. This implies that $\text{QC-Hull}(C)$ is $36\delta$-almost geodesic and proper. Condition \eqref{hyperbolicity} involves only the distance function, so $\text{QC-Hull}(C)$ is $\delta$-hyperbolic. We define a map $F\colon C \to \partial \textup{QC-Hull}(C)$ in the following way. We fix $x\in \textup{QC-Hull}(C)$. For every $z\in C$ we take a sequence $(z_n)\in \textup{QC-Hull}(C)$ such that $d(\xi_{x,z}(n), z_n) \leq 14\delta$, as provided by Lemma \ref{approximation-ray-line}. The sequence $(z_n)$ defines a point $\hat{z} \in \partial \textup{QC-Hull}(C)$, since $\lim_{n,m\to +\infty} (z_n,z_m)_x = +\infty$. We set $F(z) :=\hat{z}$. It is straightforward to check that $F$ is well defined, i.e. it does not depend on the choice of the sequence $(z_n)$. The Gromov products on $C$ and $\partial \textup{QC-Hull}(C)$ are comparable by \eqref{product-boundary-property}, namely
	\begin{equation}
		\label{eq-comparison-products}
		(z,z')_x - \delta \leq (F(z), F(z'))_x \leq (z,z')_x.
	\end{equation}
	Fix visual distances $D_{x,a}$ and $\hat{D}_{x,a}$ on $\partial X$ and $\partial \textup{QC-Hull}(C)$. By \eqref{eq-comparison-products} and \eqref{visual-metric} we have that $F$ is injective. If moreover it is surjective then it is a quasisymmetric homemorphism from $(C,D_{x,a})$ to $(\textup{QC-Hull}(C), \hat{D}_{x,a})$, which is the thesis. So fix a point $\hat{z} \in \partial\textup{QC-Hull}(C)$. By definition it is represented by a sequence $(z_n) \in \textup{QC-Hull}(C)$ such that $\lim_{n,m \to +\infty}(z_n,z_m)_x = +\infty$. Let $\gamma_n$ be a geodesic line of $X$ such that $\gamma_n^{\pm} \in C$ and $z_n \in \gamma_n$. We claim that, up to change the orientation of $\gamma_n$, it holds $\lim_{n\to +\infty}(\gamma_n^+, z_n)_x = +\infty$, so that $\gamma_n^+$ converges to $\hat{z} \in \partial X$ as $n$ goes to $+\infty$. Since $C$ is closed we deduce that $\hat{z} \in C$. Let us prove the claim is false, so both $(z_n, \gamma_n^\pm) \leq M$ for every $n$, for some $M$. By \cite[Lemma 3.2]{CavS20bis} applied to both the segments $[z_n,\gamma_n^\pm]$ we get $d(x, [z_n, \gamma_n^\pm]) \leq M + 4\delta$. Let us call $p_n^\pm$ points on the rays $[z_n, \gamma_n^\pm]$ realizing the distance from $x$. The 4-point condition \eqref{four-points-condition} gives
	$$d(x,z_n) + d(p_n^+, p_n^-) \leq \max\lbrace d(x,p_n^+) + d(z_n, p_n^-), d(x,p_n^-) + d(z_n, p_n^+)  \rbrace + 2\delta.$$
	Since $d(p_n^+, p_n^-) = d(z_n, p_n^-) + d(z_n, p_n^+)$ the inequality above implies
	$$d(x,z_n) \leq \max\lbrace d(x,p_n^+), d(x,p_n^-)  \rbrace + 2\delta \leq 2M + 10\delta.$$
	But this is impossible since $\lim_{n\to +\infty}d(x,z_n) = +\infty$.
\end{proof}

The quasisymmetric gauge of an almost geodesic Gromov-hyperbolic space is preserved by quasi-isometries. Recall that a quasi-isometry is a map $f\colon X \to Y$ between metric spaces for which there exist $K\geq0$ and $\lambda \geq 1$ such that:
\begin{itemize}
	\item[(i)] $f(X)$ is $K$-dense in $Y$;
	\item[(ii)] $\frac{1}{\lambda}d(x,x') - K \leq d(f(x),f(x')) \leq \lambda d(x,x') + K$ for all $x,x' \in X$.
\end{itemize}

\begin{prop}[\textup{\cite[Theorem 6.5]{BS11}}]
	\label{prop-isometry-quasisymmetry}
	Let $X,Y$ be two almost geodesic, Gromov-hyperbolic metric spaces and let $f\colon X \to Y$ be a quasi-isometry. Then $f$ induces a quasisymmetric homeomorphism $\partial f \colon \partial X \to \partial Y$.
\end{prop}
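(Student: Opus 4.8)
The plan is to reduce everything to a coarse control of the Gromov product under $f$, then to extend $f$ to the boundary and read the resulting distortion through a pair of visual metrics. Write $\lambda,K$ for the quasi-isometry constants of $f$ and $K_X,K_Y$ for the almost-geodesic constants of $X$ and $Y$. The first --- and main --- step is to establish
\begin{equation*}
	\tfrac{1}{\lambda}(x,y)_w - c_0 \;\le\; (f(x),f(y))_{f(w)} \;\le\; \lambda\,(x,y)_w + c_0
\end{equation*}
for all $x,y,w\in X$, with $c_0$ depending only on $\lambda,K,K_X,K_Y,\delta$. To get this I would join $x$ to $y$ by a $K_X$-almost-geodesic $\sigma$; a standard computation in $\delta$-hyperbolic spaces gives $|(x,y)_w-d(w,\sigma)|\le c_1$. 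The reparametrised image $f\circ\sigma$ is a quasi-geodesic of $Y$ with controlled constants, hence by the stability of quasi-geodesics in $\delta$-hyperbolic spaces --- in the version valid in the almost-geodesic setting --- it stays within bounded Hausdorff distance $c_2$ of any $K_Y$-almost-geodesic $\sigma'$ from $f(x)$ to $f(y)$. Comparing $d(f(w),\sigma')$ with $d(f(w),f\circ\sigma)=\inf_{p\in\sigma}d(f(w),f(p))$ via the quasi-isometry inequalities, and then replacing $d(f(w),\sigma')$ by $(f(x),f(y))_{f(w)}$ up to an additive constant, yields the displayed estimate. (The mismatch between the factors $\tfrac1\lambda$ and $\lambda$ is genuine and is exactly what prevents a biLipschitz conclusion on the boundary when $\lambda>1$.)

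Next I would extend $f$ to the boundaries. If $z=[(z_n)]\in\partial X$ then $(z_n,z_m)_w\to+\infty$, so the left inequality gives $(f(z_n),f(z_m))_{f(w)}\to+\infty$ and $(f(z_n))$ represents a point $\partial f(z)\in\partial Y$; independence of the chosen representative is checked the same way. Passing to the limit via \eqref{product-boundary-property} upgrades the estimate to $\tfrac1\lambda(z,z')_w-c_0'\le(\partial f(z),\partial f(z'))_{f(w)}\le\lambda(z,z')_w+c_0'$ for all $z,z'\in\partial X$, so $\partial f$ is injective by \eqref{visual-metric}. For surjectivity, given $[(y_n)]\in\partial Y$ I use the $K$-density of $f(X)$ to pick $x_n\in X$ with $d(f(x_n),y_n)\le K$; then $(f(x_n),f(x_m))_{f(w)}$ differs from $(y_n,y_m)_{f(w)}$ by at most $2K$, hence tends to $+\infty$, and the right inequality forces $(x_n,x_m)_w\to+\infty$, so $[(x_n)]$ maps to $[(y_n)]$.

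It then remains to translate into visual metrics. Fix a visual metric $D_{w,a}$ on $\partial X$ and $D'_{f(w),a}$ on $\partial Y$ with the same parameter $a$ (legitimate, since any two visual metrics on a given boundary are quasisymmetrically equivalent via the identity). By \eqref{visual-metric}, the boundary estimate says that $D_{w,a}(z,z')$ and $D'_{f(w),a}(\partial f(z),\partial f(z'))$ are comparable up to a multiplicative constant and an exponent in $[1/\lambda,\lambda]$; when $\lambda=1$ this is already a biLipschitz equivalence, hence a quasisymmetry. For $\lambda>1$ the two separate bounds are too lossy to conclude directly --- they only give a bi-H\"older relation with mismatched exponents --- and the right object is the metric cross-ratio of four boundary points: the estimate shows that $\partial f$ is quasi-M\"obius for the visual metrics, because a large cross-ratio forces, via \eqref{hyperbolicity-boundary}, a thin configuration of the four points on which the multiplicative distortion acts essentially additively. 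A quasi-M\"obius homeomorphism between the (bounded) boundaries is quasisymmetric --- in fact $\eta$-quasisymmetric with $\eta(t)=C\max\{t^{\alpha},t^{1/\alpha}\}$, where $\alpha$ and $C$ depend only on $\lambda,K,K_X,K_Y,\delta,a$ --- which is the conclusion; given the length of the argument it is reasonable to cite \cite[Theorem~6.5]{BS11} for this last part.

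The main obstacle I expect is precisely this reconciliation of the \emph{multiplicative} part of the distortion with a genuine quasisymmetry gauge: the crude Gromov-product estimate is quick to obtain but does not, on its own, prevent equal visual distances from being mapped to wildly unequal ones, so one must pass through the quasi-M\"obius formalism (or rerun the geometric estimate at the level of cross-ratios, using the thinness of the relevant ideal configurations). A secondary technical point is that the stability of quasi-geodesics used in the first step is usually stated for geodesic spaces and has to be run in the almost-geodesic setting, with constants depending in addition on $K_X$ and $K_Y$.
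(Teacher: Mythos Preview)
The paper does not give a proof of this proposition at all: it is quoted verbatim from \cite[Theorem~6.5]{BS11} and used as a black box. Your sketch is the standard route to this result and is correct in outline --- control the Gromov product by $\tfrac{1}{\lambda}(x,y)_w - c_0 \le (f(x),f(y))_{f(w)} \le \lambda(x,y)_w + c_0$ via the Morse lemma, extend to the boundary using \eqref{product-boundary-property}, and then pass through the quasi-M\"obius property of $\partial f$ to conclude quasisymmetry on the bounded visual boundaries. The two technical points you flag (stability of quasi-geodesics in merely almost-geodesic hyperbolic spaces, and the passage from power quasi-M\"obius to quasisymmetric on bounded spaces) are real but are precisely what is worked out in the reference, so deferring to \cite{BS11} for them is appropriate.
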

The statement means that for one (hence every) choice of metrics on $\mathcal{J}(\partial X)$ and $\mathcal{J}(\partial Y)$, the map $\partial f$ is a quasisymmetric homeomorphism. In this case we write $\mathcal{J}(\partial X) \cong \mathcal{J}(\partial Y)$ as in Proposition \ref{prop-convex-hull-boundary}.

\subsection{The proof of Theorem \ref{theorem-main-introduction}}
\label{sec-proof-B}
We recall the definition of the class $\mathcal{M}(\delta,D)$ appearing in Theorem \ref{theorem-main-introduction}.
Let $X$ be a proper, geodesic, $\delta$-hyperbolic metric space.
Every isometry of $X$ acts naturally on $\partial X$ and the resulting map on $X\cup \partial X$ is a homeomorphism.
A group of isometries $\Gamma$ of $X$ is said discrete if it is discrete in the compact-open topology.
The {\em limit set} $\Lambda(\Gamma)$ of a discrete group of isometries $\Gamma$ is the set of accumulation points of the orbit $\Gamma x$ on $\partial X$, where $x$ is any point of $X$. The group $\Gamma$ is called {\em elementary} if $\# \Lambda(\Gamma) \leq 2$. The set $\Lambda(\Gamma)$ is closed and $\Gamma$-invariant so it is its quasiconvex hull. A discrete group of isometries $\Gamma$ is {\em quasiconvex-cocompact} if its action on QC-Hull$(\Lambda(\Gamma))$ is cocompact, i.e. if there exists $D\geq 0$ such that for all $x,y\in \text{QC-Hull}(\Lambda(\Gamma))$ it holds $d(gx,y)\leq D$ for some $g\in \Gamma$. The smallest $D$ satisfying this property is called the {\em codiameter} of $\Gamma$.\vspace{2mm}

\noindent\emph{Given two real numbers $\delta \geq 0$ and $D>0$ we define $\mathcal{M}(\delta,D)$ to be the class of triples $(X,x,\Gamma)$, where $X$ is a proper, geodesic, $\delta$-hyperbolic metric space, $\Gamma$ is a discrete, non-elementary, torsion-free, quasiconvex-cocompact group of isometries with codiameter $\leq D$ and $x\in \textup{QC-Hull}(\Lambda(\Gamma))$.} 
\vspace{2mm}

\noindent Let $\Gamma$ be a finitely generated. Given a finite generating set $\Sigma$ of $\Gamma$ one can construct the Cayley graph $\text{Cay}(\Gamma,\Sigma)$ of $\Gamma$ relative to $\Sigma$. Any two Cayley graphs, made with respect to different generating sets, are quasi-isometric. $\Gamma$ is said to be Gromov-hyperbolic if one (and hence all) of its Cayley graphs is Gromov-hyperbolic. If it is the case the Gromov boundaries of every two Cayley graphs are quasisymmetric equivalent, by Proposition \ref{prop-isometry-quasisymmetry}. We denote the corresponding quasisymmetric gauge by $\mathcal{J}(\partial \Gamma)$.
A straightforward modification of the classical proof of the Svarc-Milnor lemma (along the same lines of \cite[Lemma 5.1]{Cav21ter}) says that every Cayley graph of $\Gamma$ is quasi-isometric to $\text{QC-Hull}(\Lambda(\Gamma))$, if $(X,x,\Gamma) \in \mathcal{M}(\delta, D)$. So both these spaces are Gromov-hyperbolic and almost geodesic. By Proposition \ref{prop-isometry-quasisymmetry} the gauges $\mathcal{J}(\partial \Gamma)$ and $\mathcal{J}(\partial \textup{QC-Hull}(\Lambda(\Gamma))) \cong \mathcal{J}(\Lambda(\Gamma))$ are quasisymmetric equivalent. The last equality is Proposition \ref{prop-convex-hull-boundary}. We can already prove the last part of Theorem \ref{theorem-main-introduction}.

\begin{prop}
	\label{prop-continuity}
	If $(X_n, x_n, \Gamma_n) \underset{\textup{eq-pGH}}{\longrightarrow} (X_\infty, x_\infty, \Gamma_\infty)$, with $(X_n, x_n, \Gamma_n) \in \mathcal{M}(\delta,D)$, then $\lim_{n\to + \infty}\textup{CD}(\Lambda(\Gamma_n)) = \textup{CD}(\Lambda(\Gamma_\infty))$.
\end{prop}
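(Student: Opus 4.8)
The plan is to reduce the statement to the quasisymmetric invariance of $\textup{CD}$: I will argue that for every $n$ large enough the limit sets $\Lambda(\Gamma_n)$ and $\Lambda(\Gamma_\infty)$ carry the same quasisymmetric gauge, and, since $\textup{CD}$ depends only on the gauge, the conformal dimensions coincide \emph{exactly} for $n\gg 0$, which makes the limit statement immediate.

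The first and main step is to extract algebraic rigidity from the convergence. Since $(X_n,x_n,\Gamma_n)\in\mathcal{M}(\delta,D)$ and, by \cite{Cav21ter}, the class $\mathcal{M}(\delta,D)$ is closed under equivariant pointed Gromov-Hausdorff convergence, we have $(X_\infty,x_\infty,\Gamma_\infty)\in\mathcal{M}(\delta,D)$; moreover the same analysis provides, for every $n$ large enough, an isomorphism $\varphi_n\colon\Gamma_n\to\Gamma_\infty$ compatible with the convergence. I expect this to be the real obstacle: it is here that the uniform hyperbolicity constant $\delta$, the uniform bound $D$ on the codiameter and the torsion-free, non-elementary hypotheses are used, through the control of the equivariant Gromov-Hausdorff convergence of the cocompact actions on the quasiconvex hulls $\textup{QC-Hull}(\Lambda(\Gamma_n))$; everything after this point is formal.

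Next I would convert the isomorphisms into an identification of quasisymmetric gauges. Recall from Section \ref{sec-Gromov-application} that for every triple in $\mathcal{M}(\delta,D)$ the Svarc-Milnor lemma gives a quasi-isometry between any Cayley graph of the group and its quasiconvex hull, all these spaces being proper, almost geodesic and $\delta$-hyperbolic by Proposition \ref{prop-convex-hull-boundary}; hence, by Propositions \ref{prop-isometry-quasisymmetry} and \ref{prop-convex-hull-boundary}, $\mathcal{J}(\partial\Gamma)\cong\mathcal{J}(\Lambda(\Gamma))$. Since the gauge $\mathcal{J}(\partial\Gamma)$ of a hyperbolic group is by construction an isomorphism invariant (any isomorphism induces an isometry, hence a quasi-isometry, between suitable Cayley graphs, to which Proposition \ref{prop-isometry-quasisymmetry} applies), the isomorphism $\varphi_n$ gives $\mathcal{J}(\partial\Gamma_n)\cong\mathcal{J}(\partial\Gamma_\infty)$, and therefore $\mathcal{J}(\Lambda(\Gamma_n))\cong\mathcal{J}(\Lambda(\Gamma_\infty))$ for all $n$ large enough.

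Finally I would conclude with the quasisymmetric invariance of $\textup{CD}$, which I would record as a short observation: if $F\colon(Z,d)\to(Z',d')$ is a quasisymmetric homeomorphism, then the pullback $F^{*}d'$, defined by $F^{*}d'(z_1,z_2):=d'(F(z_1),F(z_2))$, belongs to $\mathcal{J}(Z,d)$ (the identity $(Z,d)\to(Z,F^{*}d')$ is a composition of quasisymmetric maps) and $(Z,F^{*}d')$ is isometric to $(Z',d')$, so $\textup{HD}(Z,F^{*}d')=\textup{HD}(Z',d')$; running the argument with $F^{-1}$ as well shows that $d'\mapsto F^{*}d'$ is a Hausdorff-dimension-preserving bijection $\mathcal{J}(Z',d')\to\mathcal{J}(Z,d)$, whence $\textup{CD}(Z,d)=\textup{CD}(Z',d')$. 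In particular $\textup{CD}$ is constant on each quasisymmetric gauge, so $\textup{CD}(\Lambda(\Gamma))$ is well defined independently of the chosen visual metric (all of which lie in $\mathcal{J}(\Lambda(\Gamma))$), and the previous step gives $\textup{CD}(\Lambda(\Gamma_n))=\textup{CD}(\Lambda(\Gamma_\infty))$ for every $n$ large enough. Hence $\lim_{n\to+\infty}\textup{CD}(\Lambda(\Gamma_n))$ exists and equals $\textup{CD}(\Lambda(\Gamma_\infty))$.
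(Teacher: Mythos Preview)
Your proposal is correct and follows essentially the same route as the paper: closedness of $\mathcal{M}(\delta,D)$ gives $(X_\infty,x_\infty,\Gamma_\infty)\in\mathcal{M}(\delta,D)$, the eventual isomorphism $\Gamma_n\cong\Gamma_\infty$ (which the paper quotes as \cite[Corollary~7.7]{Cav21ter}) yields $\mathcal{J}(\partial\Gamma_n)\cong\mathcal{J}(\partial\Gamma_\infty)$ via Proposition~\ref{prop-isometry-quasisymmetry}, and the identification $\mathcal{J}(\partial\Gamma)\cong\mathcal{J}(\Lambda(\Gamma))$ from the discussion preceding the proposition transfers this to the limit sets. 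Your added justification that $\textup{CD}$ is a quasisymmetric invariant is more explicit than the paper's (which takes this as understood), but otherwise the arguments coincide.
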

\noindent Here $(X_n, x_n, \Gamma_n) \underset{\textup{eq-pGH}}{\longrightarrow} (X_\infty, x_\infty, \Gamma_\infty)$ means that the triples $(X_n, x_n, \Gamma_n)$ converge in the equivariant pointed Gromov-Hausdorff sense to the triple $(X_\infty, x_\infty, \Gamma_\infty)$. We will not recall the definition, details can be found for instance in \cite{Cav21ter}. If all the triples $(X_n, x_n, \Gamma_n)$ belong to $\mathcal{M}(\delta,D)$ then also $(X_\infty, x_\infty, \Gamma_\infty) \in \mathcal{M}(\delta,D)$, by \cite[Theorem A]{Cav21ter}. In particular it is meaningful to talk about $\Lambda(\Gamma_\infty)$. Recall that the conformal dimensions are the conformal dimensions of the quasisymmetic gauges $\mathcal{J}(\Lambda(\Gamma_n))$, $n\in \mathbb{N} \cup \lbrace \infty \rbrace$. 

\begin{proof}
	By \cite[Theorem A]{Cav21ter} the triple $(X_\infty,x_\infty,\Gamma_\infty)$ belongs to $\mathcal{M}(\delta,D)$. Moreover \cite[Corollary 7.7]{Cav21ter} implies that $\Gamma_n$ is isomorphic to $\Gamma_\infty$ for $n$ big enough. Using Proposition \ref{prop-isometry-quasisymmetry} we conclude that $\mathcal{J}(\partial\Gamma_n) \cong \mathcal{J}(\partial\Gamma_\infty)$. The discussion above says that $\mathcal{J}(\Lambda(\Gamma_n)) \cong \mathcal{J}(\Lambda(\Gamma_\infty))$. Therefore, by definition, $\textup{CD}(\Lambda(\Gamma_n)) = \textup{CD}(\Lambda(\Gamma_\infty))$ for $n$ big enough.
\end{proof}

The next step is to show that, under the assumptions of Theorem \ref{theorem-main-introduction}, the spaces $\Lambda(\Gamma_n)$ are uniformly perfect and uniformly quasi-selfsimilar, when equipped with suitable visual metrics. Let $(X,x,\Gamma)\in \mathcal{M}(\delta,D)$. We always consider a standard visual metric $D_x$ centered at $x$ and with parameter $a_\delta = \frac{1}{4\delta \log_2e}$. All the estimates will be done with respect to this metric $D_x$.

\noindent The following three results are essentially known, see for instance \cite{Kle06}. We provide quantified version of them. The critical exponent of $\Gamma$ is $h_\Gamma = \lim_{T\to + \infty} \frac{1}{T} \log \# \Gamma x \cap \overline{B}(x,T).$

\begin{prop}
	\label{prop-Ahlfors-regularity}
	Let $\delta,D,H\geq 0$. There exists $A = A(\delta,D,H) >0$ such that for all $(X,x,\Gamma) \in \mathcal{M}(\delta,D)$ with $h_\Gamma \leq H$ the limit set $\Lambda(\Gamma)$ is $(A,\frac{h_\Gamma}{a_\delta})$-Ahlfors regular.
\end{prop}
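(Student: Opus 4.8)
The plan is to exhibit an explicit measure on $\Lambda(\Gamma)$ — namely a Patterson--Sullivan measure of dimension $h_\Gamma$ centered at $x$ — and to verify the two-sided Ahlfors regularity estimate directly from the visual metric estimate \eqref{eq-new-visual} and the shadow lemma, keeping careful track of how all the constants depend only on $\delta$, $D$ and $H$. First I would recall the construction: since $\Gamma$ acts quasiconvex-cocompactly with codiameter $\leq D$ on the $\delta$-hyperbolic, $36\delta$-almost geodesic space $\text{QC-Hull}(\Lambda(\Gamma))$ (Proposition \ref{prop-convex-hull-boundary}), the Poincaré series $\sum_{g\in\Gamma} e^{-s\,d(x,gx)}$ diverges at $s=h_\Gamma$ (divergence type is automatic for cocompact actions), and one obtains a $\Gamma$-conformal density $\{\mu_y\}_{y}$ of dimension $h_\Gamma$ with $\mu = \mu_x$ supported on $\Lambda(\Gamma)$; the total mass and the conformality constants are controlled by $\delta$ and $D$. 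The key geometric input is the \emph{shadow lemma}: for $g\in\Gamma$, writing $\mathcal{O}_x(gx,r)$ for the shadow of the ball $\overline{B}(gx,r)$ seen from $x$, one has $\frac{1}{c}\,e^{-h_\Gamma d(x,gx)} \leq \mu(\mathcal{O}_x(gx,r)) \leq c\,e^{-h_\Gamma d(x,gx)}$ for $r = r(\delta,D)$ large enough, with $c = c(\delta,D,H)$; the upper bound uses only hyperbolicity and the conformal density property, while the lower bound uses that $x$ lies in the quasiconvex hull and the codiameter bound $D$ to guarantee there is actually $\mu$-mass in the shadow.

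Next I would convert shadows into metric balls. Given $z\in\Lambda(\Gamma)$ and a radius $\rho\in(0,\text{Diam}(\Lambda(\Gamma))]$, set $T = -\frac{1}{a_\delta}\log\rho$ and pick a point $gx$ on a geodesic ray $\xi_{x,z}$ (or within bounded distance of one, using Lemma \ref{approximation-ray-line} and cocompactness) with $d(x,gx)$ within $O(\delta)+D$ of $T$. Standard hyperbolic-geometry estimates, together with the standard visual metric comparison \eqref{eq-new-visual}, show that $\mathcal{O}_x(gx,r)$ is sandwiched between two balls $B_{D_x}(z,\rho/K)$ and $B_{D_x}(z,K\rho)$ for a constant $K = K(\delta,D)$. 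Feeding this into the shadow lemma and using $e^{-h_\Gamma d(x,gx)} \asymp \rho^{\,h_\Gamma/a_\delta}$ (where the implied constant is controlled once $h_\Gamma \leq H$, since the error $O(\delta)+D$ in $d(x,gx)$ gets multiplied by $h_\Gamma$) yields $\frac{1}{A}\rho^{h_\Gamma/a_\delta} \leq \mu(B_{D_x}(z,\rho)) \leq A\,\rho^{h_\Gamma/a_\delta}$ with $A = A(\delta,D,H)$, which is exactly the claim with $s = h_\Gamma/a_\delta$.

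The main obstacle is bookkeeping rather than any conceptual difficulty: one must check that \emph{every} constant entering the Patterson--Sullivan construction and the shadow lemma — the conformality constant of the density, the shadow radius $r$, the comparison constant $K$ between shadows and metric balls, and the total mass normalization — depends only on $\delta$, $D$ (and, for the final exponential-to-power conversion, on the upper bound $H$ for $h_\Gamma$), and not on the particular space $X$ or group $\Gamma$. This is where the hypotheses $(X,x,\Gamma)\in\mathcal{M}(\delta,D)$ and $h_\Gamma\leq H$ are each used: quasiconvex-cocompactness with codiameter $\leq D$ both forces divergence type and gives the uniform lower shadow bound, while $h_\Gamma \leq H$ is needed precisely to absorb the bounded additive ambiguity in $d(x,gx)$ into a multiplicative constant when passing from $e^{-h_\Gamma d(x,gx)}$ to $\rho^{h_\Gamma/a_\delta}$. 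A clean way to organize this is to first prove the shadow lemma with constants depending only on $\delta$ and $D$, and then isolate the single place where $H$ enters.
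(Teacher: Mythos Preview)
Your sketch is correct and is precisely the classical Patterson--Sullivan/shadow lemma argument that establishes Ahlfors regularity of limit sets of quasiconvex-cocompact groups with uniformly controlled constants. The paper's own proof consists solely of a citation to \cite[Theorem~6.1 and Lemma~4.9]{Cav21ter}, which packages exactly this argument; so you have simply unpacked what the paper invokes as a black box, and your identification of the single place where the bound $h_\Gamma\leq H$ enters (absorbing the additive error in $d(x,gx)$ into a multiplicative constant) is accurate.
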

\begin{proof}
	It follows from \cite[Theorem 6.1 and Lemma 4.9]{Cav21ter}.
\end{proof}
\begin{cor}
	\label{cor-hyp-uniform-perfect}
	Let $\delta,D,H\geq 0$. There exists $a_0 = a_0(\delta,D,H)$ such that for all $(X,x,\Gamma) \in \mathcal{M}(\delta,D)$ with $h_\Gamma \leq H$ the limit set $\Lambda(\Gamma)$ is $a_0$-uniformly perfect.
\end{cor}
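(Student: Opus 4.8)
The plan is to deduce this from the Ahlfors regularity established in Proposition \ref{prop-Ahlfors-regularity} together with the classical Lemma \ref{lemma-Ahlfors-perfect}. By Proposition \ref{prop-Ahlfors-regularity}, for every $(X,x,\Gamma)\in\mathcal{M}(\delta,D)$ with $h_\Gamma\leq H$ the limit set $\Lambda(\Gamma)$, equipped with the standard visual metric $D_x$ of parameter $a_\delta$, is $(A,s)$-Ahlfors regular with $A\leq A(\delta,D,H)=:A_0$ and $s=h_\Gamma/a_\delta$. Lemma \ref{lemma-Ahlfors-perfect} then produces uniform perfectness with a constant depending only on an upper bound for $A$ and a \emph{positive lower} bound for $s$; since $a_\delta$ depends only on $\delta$, everything reduces to producing a uniform lower bound $h_\Gamma\geq h_0=h_0(\delta,D)>0$ on the critical exponent.

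The uniform lower bound on $h_\Gamma$ is the real content, and it is where non-elementarity must be used quantitatively. One way to obtain it: being non-elementary, $\Gamma$ contains two independent hyperbolic isometries, and since $\Gamma$ acts cocompactly with codiameter $\leq D$ on the $\delta$-hyperbolic, $36\delta$-almost geodesic space $\textup{QC-Hull}(\Lambda(\Gamma))$ (Proposition \ref{prop-convex-hull-boundary}), one can choose such isometries $g_1,g_2$ with translation lengths bounded above by some $\ell_0=\ell_0(\delta,D)$ and with axes transverse enough that, for a suitable $N=N(\delta,D)$, the elements $g_1^N$ and $g_2^N$ generate a free sub-semigroup whose orbit map $w\mapsto wx$ is a quasi-isometric embedding with constants depending only on $\delta$ and $D$. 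Counting the $2^m$ words of length $m$ in $g_1^N,g_2^N$, each moving $x$ a distance at most $mN\ell_0+C(\delta,D)$, gives $\#\bigl(\Gamma x\cap\overline{B}(x,\,mN\ell_0+C)\bigr)\geq 2^m$ and hence $h_\Gamma\geq \frac{\log 2}{N\ell_0}=:h_0(\delta,D)>0$. (Alternatively, the positivity of $h_\Gamma$ together with the compactness of $\mathcal{M}(\delta,D)$ under equivariant pointed Gromov--Hausdorff convergence established in \cite{Cav21ter} forces a uniform positive lower bound depending only on $\delta,D$.)

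Combining the two steps, $\Lambda(\Gamma)$ is $(A,s)$-Ahlfors regular with $A\leq A_0=A(\delta,D,H)$ and $s\geq s_0:=h_0(\delta,D)/a_\delta>0$, so Lemma \ref{lemma-Ahlfors-perfect} applies with the constants $A_0,s_0$ and gives that $\Lambda(\Gamma)$ is $a_0$-uniformly perfect with $a_0=a_0(A_0,s_0)$, a quantity depending only on $\delta,D$ and $H$. The only genuine obstacle is the uniform positivity of $h_\Gamma$ on $\mathcal{M}(\delta,D)$: for a single group one trivially has $h_\Gamma>0$, but the corollary needs this bound to be uniform over the whole class, which is exactly what the ping-pong construction with uniformly controlled data (or the compactness of $\mathcal{M}(\delta,D)$) supplies; the remainder is routine bookkeeping with the quantifiers.
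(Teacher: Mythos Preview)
Your proposal is correct and follows essentially the same approach as the paper: Ahlfors regularity from Proposition~\ref{prop-Ahlfors-regularity} combined with Lemma~\ref{lemma-Ahlfors-perfect}, the key input being a uniform lower bound on $h_\Gamma$ over $\mathcal{M}(\delta,D)$. The paper simply cites \cite[Proposition 5.2]{Cav21ter} for the explicit bound $h_\Gamma \geq \frac{\log 2}{99\delta + 10D}$, whose proof is indeed a ping-pong argument of the type you sketch; your alternative compactness argument is a reasonable heuristic but would need care to avoid circularity with the results of \cite{Cav21ter}.
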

\begin{proof}
	\cite[Proposition 5.2]{Cav21ter} says that $h_\Gamma \geq \frac{\log 2}{99\delta + 10D}$. The conclusion follows by Proposition \ref{prop-Ahlfors-regularity} and Lemma \ref{lemma-Ahlfors-perfect}.
\end{proof}

\begin{prop}
	\label{prop-hyp-qss}
	Let $\delta, D \geq 0$. There are $L_0 = L_0(\delta, D)$ and $\rho_0 = \rho_0(\delta, D)$ such that for all $(X,x,\Gamma)\in \mathcal{M}(\delta,D)$ the set $\Lambda(\Gamma)$ is $(L_0,\rho_0)$-q.s.s.
\end{prop}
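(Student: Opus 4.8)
The plan is to realise the rescaling maps of Definition \ref{defin-qss} as elements of $\Gamma$: cocompactness of the $\Gamma$-action on $\textup{QC-Hull}(\Lambda(\Gamma))$ lets us translate any ``deep'' point of a geodesic ray pointing to $z$ back into a bounded neighbourhood of the basepoint $x$, and in the standard visual metric $D_x$ such a translation acts on the part of $\Lambda(\Gamma)$ ``below'' that point essentially as a rescaling by $e^{a_\delta\cdot(\text{depth})}$. All constants below depend only on $\delta$ and $D$ (recall $3-2e^{a_\delta\delta}$ and $1$ in \eqref{eq-new-visual} are universal, since $a_\delta\delta$ is). Concretely: fix $z\in\Lambda(\Gamma)$ and $0<\rho\le\rho_0$, with $\rho_0=\rho_0(\delta,D)$ (in particular $\rho_0\le 3-2e^{a_\delta\delta}$) to be fixed small at the end. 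Pick a geodesic ray $\xi=\xi_{x,z}$ with $\xi^+=z$, put $t:=\tfrac1{a_\delta}\log\tfrac{\rho_0}{\rho}\ge 0$ and $p:=\xi(t)$. By Lemma \ref{approximation-ray-line} there is $p'\in\textup{QC-Hull}(\Lambda(\Gamma))$ with $d(p,p')\le 14\delta$; since $x\in\textup{QC-Hull}(\Lambda(\Gamma))$ and $\Gamma$ has codiameter $\le D$, there is $g\in\Gamma$ with $d(gp',x)\le D$, hence $d(gp,x)\le R_0:=D+14\delta$. I claim $\Phi:=g|_{\Lambda(\Gamma)}$ works; it is a bijection of $\Lambda(\Gamma)$ since $\Lambda(\Gamma)$ is $\Gamma$-invariant.

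\emph{$\Phi$ is $L_0$-biLipschitz for $\tfrac{\rho_0}{\rho}\cdot D_x$.} Let $z_1,z_2\in B(z,\rho)\cap\Lambda(\Gamma)$. As $\rho_0\le 3-2e^{a_\delta\delta}$, the lower bound in \eqref{eq-new-visual} turns $D_x(z_i,z)\le\rho$ into $(z_i,z)_x\ge t$, so $\xi_{x,z_1}$ and $\xi_{x,z_2}$ both pass within $O(\delta)$ of $p$. A routine $\delta$-hyperbolic computation (Gromov products as coarse distances to geodesic lines, $p$ on the ray toward $z$) gives $(z_1,z_2)_p=(z_1,z_2)_x-t+O(\delta)$. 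Since $g$ is an isometry $(gz_1,gz_2)_{gp}=(z_1,z_2)_p$, and $d(x,gp)\le R_0$ gives $|(gz_1,gz_2)_x-(gz_1,gz_2)_{gp}|\le R_0$; hence $(gz_1,gz_2)_x=(z_1,z_2)_x-t+O(\delta+D)$. Feeding this into \eqref{eq-new-visual} and using $e^{a_\delta t}=\rho_0/\rho$ yields $D_x(\Phi z_1,\Phi z_2)=\tfrac{\rho_0}{\rho}D_x(z_1,z_2)\,e^{\pm c(\delta,D)}$, i.e. $\Phi$ is $L_0$-biLipschitz with $L_0:=e^{c(\delta,D)}$ (to be enlarged below).

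\emph{$\Phi(B(z,\rho))\supseteq B(\Phi z,\rho_0/L_0)$.} Take $w\in\Lambda(\Gamma)$ with $D_x(w,\Phi z)<\rho_0/L_0$ and write $w=\Phi z_1=gz_1$. By \eqref{eq-new-visual}, $(w,gz)_x>T_0-O(\delta)$ with $T_0:=\tfrac1{a_\delta}\log\tfrac{L_0}{\rho_0}$, so $(z_1,z)_p=(w,gz)_{gp}>T_0-R_0-O(\delta)$. If $\xi_{x,z_1}$ did not pass close to $p$, the rays from $p$ toward $z_1$ and toward $z$ would diverge at once (one heading back toward $x$, the other forward along $\xi$), forcing $(z_1,z)_p=O(\delta)$; so if $L_0$ is large enough that $T_0-R_0$ beats this threshold, $\xi_{x,z_1}$ does pass near $p$, and then the estimate above reversed gives $(z_1,z)_x\ge(z_1,z)_p+t-O(\delta)>T_0-R_0+t-O(\delta)$. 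Hence, again by \eqref{eq-new-visual} and $e^{-a_\delta t}=\rho/\rho_0$, $D_x(z_1,z)<e^{-a_\delta(T_0-R_0-O(\delta))}\tfrac{\rho}{\rho_0}$, which is $<\rho$ once $L_0$ (thus $T_0$) is large in terms of $\delta,D,\rho_0$; so $z_1\in B(z,\rho)$ and $w\in\Phi(B(z,\rho))$. Enlarging $L_0$ only strengthens the previous step, so one fixes a single $L_0=L_0(\delta,D)$ meeting both demands and finally $\rho_0=\rho_0(\delta,D)$ small enough for the additive constants to absorb.

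\emph{Main difficulty.} The only nontrivial input is the coarse identity $(z_1,z_2)_p=(z_1,z_2)_x-t+O(\delta)$ for boundary points whose rays from $x$ pass near $p$, with the companion dichotomy (either $\xi_{x,z_1}$ passes near $p$, or $(z_1,z)_p=O(\delta)$), and the bookkeeping that all errors are $O(\delta+D)$ and independent of $\rho$; this is standard hyperbolic geometry (\cite{BH09}, \cite{CDP90}, \cite{BS11}). Alternatively one could first pass to $\partial\textup{QC-Hull}(\Lambda(\Gamma))$ via Proposition \ref{prop-convex-hull-boundary} and run the same translation argument inside $\textup{QC-Hull}(\Lambda(\Gamma))$, which is proper, $36\delta$-almost geodesic, $\delta$-hyperbolic and carries a cocompact $\Gamma$-action of codiameter $\le D$ — the classical setting for self-similarity of boundaries of cocompact hyperbolic spaces (cf. \cite{Kle06}).
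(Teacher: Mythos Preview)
Your proof is correct and follows the same strategy as the paper's: take $\Phi$ to be the boundary action of a group element $g\in\Gamma$ that (up to replacing $g$ by $g^{-1}$) moves $x$ to within $D+14\delta$ of the point $\xi_{x,z}(t)$ on the ray toward $z$, and deduce the biLipschitz estimate from the Gromov-product shift $(z_1,z_2)_p\approx(z_1,z_2)_x-t$ combined with $|(gz_1,gz_2)_x-(z_1,z_2)_p|\le d(x,gp)$. The paper carries these estimates out explicitly via Lemmas~\ref{product-rays} and~\ref{parallel-geodesics} and Corollary~\ref{cor-product-rays} (obtaining the concrete values $\rho_0=e^{-a_\delta D}$ and $L_0=\max\{e^{a_\delta(80\delta+4D)},2e^{48a_\delta\delta}\}$), and deduces the inclusion $\Phi(B(z,\rho))\supseteq B(\Phi z,\rho_0/L_0)$ by directly inverting the lower biLipschitz bound rather than through your dichotomy argument.
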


Before the proof of this last property we need a bit of preparation.
\begin{lemma}[\textup{\cite[Lemma 4.2]{Cav21ter}}]
	\label{product-rays}
	Let $X$ be a proper, geodesic, $\delta$-hyperbolic metric space, $z,z'\in \partial X$ and $x\in X$. 
	\begin{itemize}
		\item[(i)] If $(z,z')_{x} \geq T$ then $d(\xi_{x,z}(T - \delta),\xi_{x,z'}(T - \delta)) \leq 4\delta$.
		\item[(ii)] If $d(\xi_{x,z}(T),\xi_{x,z'}(T)) < 2b$ then $(z,z')_{x} > T - b$, for all $b>0$.
	\end{itemize}
\end{lemma}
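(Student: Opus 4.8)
The plan is to reduce both statements to estimates on Gromov products of points lying on the rays $\xi_{x,z}$ and $\xi_{x,z'}$, and then transfer these to the boundary via \eqref{product-boundary-property}. The two elementary identities I would use repeatedly are, for a geodesic ray $\xi$ issuing from $x$, the equality $(\xi(s),\xi(s'))_x = s$ whenever $0\le s\le s'$ (immediate from $d(\xi(s),\xi(s'))=s'-s$), and the relation $(\xi(t),\xi'(t))_x = t-\tfrac12 d(\xi(t),\xi'(t))$. The second identity shows that bounding the distance between the two rays at time $t$ is equivalent to a lower bound on their Gromov product at time $t$: the conclusion $d(\xi(t),\xi'(t))\le 4\delta$ is exactly $(\xi(t),\xi'(t))_x\ge t-2\delta$.

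Part (ii) I would prove directly, essentially without using hyperbolicity. Assuming $d(\xi_{x,z}(T),\xi_{x,z'}(T))<2b$, concatenating the subray of $\xi_{x,z}$ from $\xi_{x,z}(n)$ to $\xi_{x,z}(T)$, the short segment from $\xi_{x,z}(T)$ to $\xi_{x,z'}(T)$, and the subray of $\xi_{x,z'}$ from $\xi_{x,z'}(T)$ to $\xi_{x,z'}(m)$ gives $d(\xi_{x,z}(n),\xi_{x,z'}(m))\le (n-T)+d(\xi_{x,z}(T),\xi_{x,z'}(T))+(m-T)$ for all $n,m\ge T$. Hence $(\xi_{x,z}(n),\xi_{x,z'}(m))_x\ge T-\tfrac12 d(\xi_{x,z}(T),\xi_{x,z'}(T))>T-b$, and since $d(\xi_{x,z}(T),\xi_{x,z'}(T))$ is strictly below $2b$ this lower bound is uniform in $n,m$ and bounded away from $T-b$. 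Passing to the liminf and applying the right inequality of \eqref{product-boundary-property} (the rays converge to $z,z'$) yields $(z,z')_x\ge \liminf_{n,m}(\xi_{x,z}(n),\xi_{x,z'}(m))_x>T-b$.

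For part (i) the main tool is a monotonicity estimate along a ray: for $0\le s\le s'$ and any point $y$, applying the hyperbolicity inequality \eqref{hyperbolicity} to the triple $\xi(s),\xi(s'),y$ based at $x$ and using $(\xi(s),\xi(s'))_x=s$ gives $(\xi(s),y)_x\ge \min\{s,(\xi(s'),y)_x\}-\delta$. I would apply this once to the ray $\xi_{x,z'}$ (with $y=\xi_{x,z}(s)$, lowering its parameter from a large $t'$ to $T-\delta$) and once to the ray $\xi_{x,z}$ (with $y=\xi_{x,z'}(T-\delta)$, lowering its parameter from a large $s$ to $T-\delta$), obtaining
$$(\xi_{x,z}(T-\delta),\xi_{x,z'}(T-\delta))_x\ge \min\big\{T-\delta,\ \min\{T-\delta,(\xi_{x,z}(s),\xi_{x,z'}(t'))_x\}-\delta\big\}-\delta.$$
Letting $s,t'\to+\infty$ and invoking the left inequality of \eqref{product-boundary-property} together with the hypothesis $(z,z')_x\ge T$ gives $(\xi_{x,z}(s),\xi_{x,z'}(t'))_x\ge T-\delta-\varepsilon$ eventually, so the right-hand side is at least $T-3\delta-\varepsilon$; since the left-hand side is independent of $s,t',\varepsilon$, letting $\varepsilon\to 0$ yields $(\xi_{x,z}(T-\delta),\xi_{x,z'}(T-\delta))_x\ge T-3\delta$, i.e.\ $d(\xi_{x,z}(T-\delta),\xi_{x,z'}(T-\delta))\le 4\delta$. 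The main obstacle is precisely this part (i): one must track the additive $\delta$-losses through the two nested applications of \eqref{hyperbolicity} and the boundary approximation so that the final constant comes out to exactly $4\delta$, and one must justify the passage to the limit $s,t'\to+\infty$ cleanly, the crucial point being that the quantity being bounded does not depend on these parameters.
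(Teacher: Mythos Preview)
The paper does not supply its own proof of this lemma; it merely cites \cite[Lemma 4.2]{Cav21ter}. So there is nothing to compare against, and the relevant question is simply whether your argument is correct.

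Your proof is correct. In part (ii) the triangle inequality gives, for $n,m\ge T$,
\[
(\xi_{x,z}(n),\xi_{x,z'}(m))_x \ge T - \tfrac12\,d(\xi_{x,z}(T),\xi_{x,z'}(T)),
\]
and since the right-hand side is a fixed number strictly larger than $T-b$, the right inequality of \eqref{product-boundary-property} yields $(z,z')_x>T-b$. In part (i) the two applications of \eqref{hyperbolicity} together with the left inequality of \eqref{product-boundary-property} give exactly the chain you wrote, and the $\varepsilon$-passage is legitimate because the left-hand side $(\xi_{x,z}(T-\delta),\xi_{x,z'}(T-\delta))_x$ is independent of $s,t'$; the resulting bound $(\xi_{x,z}(T-\delta),\xi_{x,z'}(T-\delta))_x\ge T-3\delta$ is equivalent to $d(\xi_{x,z}(T-\delta),\xi_{x,z'}(T-\delta))\le 4\delta$ via the identity $(\xi(t),\xi'(t))_x=t-\tfrac12 d(\xi(t),\xi'(t))$. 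The constants match the statement exactly.
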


\begin{lemma}[\textup{\cite[Lemma 4.4]{Cav21ter}}]
	\label{parallel-geodesics}
	Let $X$ be a proper, geodesic, $\delta$-hyperbolic metric space. Then every two geodesic rays $\xi, \xi'$ with same endpoints at infinity are at distance at most $8\delta$, more precisely there exist $t_1,t_2\geq 0$ such that $t_1+t_2=d(\xi(0),\xi'(0))$ and  $d(\xi(t + t_1),\xi'(t+t_2)) \leq 8\delta$ for all $t\geq 0$.
\end{lemma}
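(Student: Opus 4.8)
The plan is to prove the classical fact that asymptotic geodesic rays fellow-travel, reducing the general statement to the case of a common starting point and then aligning the two parameters at the ``corner'' of the ideal triangle with one vertex at infinity. Write $z = \xi^+ = \xi'^+ \in \partial X$ for the common endpoint, $o = \xi(0)$ and $o' = \xi'(0)$. First I would record the common-basepoint case: if $\eta,\eta'$ are geodesic rays issuing from the same point $w$ and both converging to $z$, then applying Lemma \ref{product-rays}(i) with $z' = z$ — so that $(z,z)_w = +\infty \geq T$ for every $T$ — gives $d(\eta(T-\delta),\eta'(T-\delta)) \leq 4\delta$ for all $T \geq \delta$, i.e. $d(\eta(s),\eta'(s)) \leq 4\delta$ for all $s \geq 0$ (the proof of that lemma only uses that $\eta,\eta'$ are geodesic rays from $w$ towards the given boundary points).

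For the general case I would align the rays using the geodesic triangle with vertices $o,o',z$. Set $t_1 := (o',z)_o$; since $(o',z)_o \leq d(o,o')$ this lies in $[0,d(o,o')]$. Approximating $z$ by $\xi(N)$ and invoking \eqref{product-boundary-property}, the standard thin-triangle estimate derived from the four-point condition \eqref{hyperbolicity} — applied to the finite triangles $o,o',\xi(N)$ and passed to the limit $N\to+\infty$ — shows that $\xi(t_1)$ lies within a bounded multiple of $\delta$ of the ray $\xi'$; let $\xi'(t_2)$ be a point realizing this, so that $d(\xi(t_1),\xi'(t_2)) \leq c\delta$ for an explicit constant $c$. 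The exact identity $(o',z)_o + (o,z)_{o'} = d(o,o')$ for finite triangles, combined with \eqref{product-boundary-property}, forces $t_1 + t_2 = d(o,o')$ up to an additive multiple of $\delta$; since shifting a parameter by $O(\delta)$ moves a point by $O(\delta)$ along a geodesic, I may then normalize $t_1,t_2 \geq 0$ so that $t_1 + t_2 = d(o,o')$ holds exactly while keeping $d(\xi(t_1),\xi'(t_2))$ bounded by a multiple of $\delta$.

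It then remains to propagate this bound along the two tails. For $t \geq 0$, setting $a := \xi(t_1)$, I would expand
\[
d(\xi(t_1+t),\xi'(t_2+t)) = t + d(a,\xi'(t_2+t)) - 2\,(\xi(t_1+t),\xi'(t_2+t))_a,
\]
estimate $d(a,\xi'(t_2+t)) \leq t + c\delta$ from $d(\xi(t_1),\xi'(t_2)) \leq c\delta$, and bound the Gromov product below by $t - O(\delta)$ using the boundary inequality \eqref{hyperbolicity-boundary} together with $(z,\xi(t_1+t))_a = t$ and $(z,\xi'(t_2+t))_a \geq t - d(a,\xi'(t_2))$, both tails being geodesic rays to $z$. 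This yields $d(\xi(t_1+t),\xi'(t_2+t)) \leq O(\delta)$ for all $t \geq 0$. The main obstacle here is not conceptual but quantitative: one must carefully track the additive $\delta$-losses coming from the boundary-extended Gromov products (through \eqref{product-boundary-property} and \eqref{hyperbolicity-boundary}) and from the basepoint changes, and run the ideal-triangle argument rigorously through finite approximations $\xi(N)\to z$, so that the accumulated constants collapse to exactly the claimed bound $8\delta$.
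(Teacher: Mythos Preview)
The paper does not prove this lemma; it is quoted verbatim from \cite[Lemma 4.4]{Cav21ter} and used as a black box, so there is no in-paper argument to compare against.

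Your outline is the standard one and is correct at the level of producing a bound $d(\xi(t+t_1),\xi'(t+t_2))\leq C\delta$ for some universal constant $C$: the reduction to a common basepoint via Lemma~\ref{product-rays}(i), the choice $t_1=(o',z)_o$ and the normalization $t_1+t_2=d(o,o')$ through the exact identity $(o',\cdot)_o+(o,\cdot)_{o'}=d(o,o')$ for finite triangles, and the propagation along the tails using \eqref{hyperbolicity} (extended to mixed interior/boundary points) all go through. Two small remarks: your appeal to \eqref{hyperbolicity-boundary} is stated in the paper only for three boundary points, so you should instead invoke the four-point condition \eqref{hyperbolicity} directly and pass to the limit in one slot; and the basepoint-change inequality $|(u,v)_a-(u,v)_b|\leq d(a,b)$ you implicitly use for $(z,\xi'(t_2+t))_a$ should be made explicit.

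The genuine shortfall, which you already flag, is quantitative: as written, each of the three stages (the $\delta$-loss in \eqref{product-boundary-property}, the normalization of $t_1+t_2$, and the two applications of hyperbolicity plus a basepoint change in the propagation step) contributes an additive multiple of $\delta$, and a naive tally gives something well above $8\delta$. To reach exactly $8\delta$ one typically avoids the Gromov-product bookkeeping in the propagation step and instead argues via slimness of the finite triangles $o,o',\xi(N)$ (which are $4\delta$-slim under the four-point definition), locating $\xi(t_1+t)$ within $4\delta$ of a geodesic $[o',\xi(N)]$ and then comparing that geodesic to $\xi'$ with one further $4\delta$. Without such a tightening your argument proves the lemma only with a larger constant.
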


Recall that on $\partial X$ we always consider a visual metrics of parameter $a_\delta$.

\begin{cor}
	\label{cor-product-rays}
	Let $(X,x,\Gamma)\in \mathcal{M}(\delta,D)$ and let $z,z'\in \partial X$. Let $\rho > 0$ and $R$ be such that $e^{-a_\delta R} =\rho$. If $D_x(z,z') \leq \rho$ then $d(\xi_{x,z}(R), \xi_{x,z'}(R)) \leq 14\delta$.
\end{cor}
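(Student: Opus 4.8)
The plan is to turn the metric hypothesis $D_x(z,z')\le\rho$ into a lower bound on the Gromov product $(z,z')_x$, and then feed that bound into Lemma \ref{product-rays}(i). For the first step I would use that $D_x$ is a \emph{standard} visual metric of parameter $a_\delta=\frac{1}{4\delta\log_2 e}$: the left inequality in \eqref{eq-new-visual} together with $\rho=e^{-a_\delta R}$ gives
$$(3-2e^{a_\delta\delta})\,e^{-a_\delta(z,z')_x}\le D_x(z,z')\le e^{-a_\delta R}.$$
Since $a_\delta\delta=\tfrac{\ln 2}{4}$ one has $e^{a_\delta\delta}=2^{1/4}$, hence $3-2e^{a_\delta\delta}=3-2^{5/4}>0$, so the displayed inequality can be solved for $(z,z')_x$, yielding
$$(z,z')_x\ge R+\frac{\ln(3-2^{5/4})}{a_\delta}=R-\big(4\log_2 e\big)\,|\ln(3-2^{5/4})|\,\delta\ge R-3\delta,$$
where the last inequality is the elementary numerical estimate $\big(4\log_2 e\big)\,|\ln(3-2^{5/4})|<3$ (its value is about $2.74$).

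For the second step I would split according to the size of $R$ (we may assume $R\ge 0$). If $R\le 4\delta$, then $d(\xi_{x,z}(R),\xi_{x,z'}(R))\le d(\xi_{x,z}(R),x)+d(x,\xi_{x,z'}(R))=2R\le 8\delta\le 14\delta$ and we are done. If $R>4\delta$, set $T:=R-3\delta\ge\delta$; by the first step $(z,z')_x\ge T$, so Lemma \ref{product-rays}(i) applies and gives $d(\xi_{x,z}(T-\delta),\xi_{x,z'}(T-\delta))\le 4\delta$, i.e. the two rays are $4\delta$-close at parameter $R-4\delta$. Since geodesic rays are parametrized by arclength, sliding each endpoint from parameter $R-4\delta$ to parameter $R$ moves it by exactly $4\delta$, so the triangle inequality gives
$$d(\xi_{x,z}(R),\xi_{x,z'}(R))\le 4\delta+4\delta+4\delta=12\delta\le 14\delta,$$
which is the claim. (Alternatively one can skip Lemma \ref{product-rays}(i) and iterate the hyperbolic inequality on the four (boundary) points $\xi_{x,z}(R),z,z',\xi_{x,z'}(R)$, using $(\xi_{x,z}(R),z)_x\ge R$ and $(\xi_{x,z'}(R),z')_x\ge R$; this gives a bound of the same order.)

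The only genuinely delicate point is the first step: one must check that the \emph{standard} normalization \eqref{eq-new-visual}, evaluated at the specific parameter $a_\delta$, forces $3-2e^{a_\delta\delta}$ to be strictly positive — otherwise the lower bound in \eqref{eq-new-visual} is vacuous and the argument collapses — and then carry out the estimate $\big(4\log_2 e\big)\,|\ln(3-2^{5/4})|<3$ so that everything is absorbed into the clean constant $14\delta$. Note that neither $\Gamma$ nor quasiconvex-cocompactness enters: the role of $(X,x,\Gamma)\in\mathcal{M}(\delta,D)$ is only to fix a proper geodesic $\delta$-hyperbolic space $X$, a point $x\in X$, and the standard visual metric $D_x$. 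Everything after Step 1 is routine bookkeeping with the two rays, and the slack between $12\delta$ and $14\delta$ comfortably absorbs the looseness in the constants.
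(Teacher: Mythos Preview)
Your proof is correct and follows essentially the same route as the paper: convert the bound $D_x(z,z')\le\rho$ into a lower bound on $(z,z')_x$ via the standard visual metric inequality, apply Lemma~\ref{product-rays}(i), and slide along the rays by the triangle inequality. The paper simply weakens $3-2^{5/4}$ to $\tfrac{1}{2}$ (so that $\frac{\ln 2}{a_\delta}=4\delta$ exactly), obtaining $(z,z')_x\ge R-4\delta$ and hence $4\delta+5\delta+5\delta=14\delta$; your sharper constant gives $12\delta$, and your explicit treatment of the case $R\le 4\delta$ is a nicety the paper omits.
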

\begin{proof}
	With this choice of $a_\delta$ we have $\frac{1}{2}e^{-a_\delta(z,z')_{x}}\leq D_x(z,z')\leq e^{-a_\delta(z,z')_{x}}$.
	So, if $D_x(z,z') \leq \rho$ then $(z,z')_x \geq R - \frac{\log(2)}{a_\delta} = R - 4\delta$. By Lemma \ref{product-rays} we get $d(\xi_{x,z}(R - 5\delta), \xi_{x,z'}(R-5\delta)) \leq 4\delta$, so by triangle inequality $d(\xi_{x,z}(R), \xi_{x,z'}(R)) \leq 14\delta$.
\end{proof}

We can finally give the

\begin{proof}[Proof of Proposition \ref{prop-hyp-qss}]
	We claim that $\rho_0=e^{-a_\delta \cdot D}$ works. We fix $0<\rho\leq \rho_0$ and we call $R\geq 0$ the real number such that $\rho = e^{-a_\delta \cdot R}$.
	Let $z\in \text{QC-Hull}(\Lambda(\Gamma))$ and $\xi_{x,z}$ be a geodesic ray joining $x$ to $z$. By Lemma \ref{approximation-ray-line} there is a point $y\in \text{QC-Hull}(\Lambda(\Gamma))$ such that $d(\xi_{x,z}(R),y) \leq 14\delta$. Moreover by definition of quasiconvex-cocompactness there exists $g\in \Gamma$ such that $d(y, gx) \leq D$, so $d(\xi_{x,z}(R), gx) \leq 14\delta + D$. Observe that $d(x,gx)\leq R + 14\delta + D$. We call $\Phi$ the map induced by $g$ on $\Lambda(\Gamma)$, which is well defined since $\Lambda(\Gamma)$ is $\Gamma$-invariant. We claim it satisfies the properties required by Definition \ref{defin-qss}.\\
	Let $w,w'$ be two points of $B(z,\rho) \cap \Lambda(\Gamma)$, so $D_x(w,z)$, $D_x(w',z)< \rho$. Let $T \geq 0$ be the real number such that $e^{-a_\delta \cdot T} = D_x(w,w')$. Since $D_x(w,w') < 2\rho$ we get $e^{-a_\delta \cdot T} < 2e^{-a_\delta \cdot R}$ and, by definition of $a_\delta$, $T>R-4\delta$. We apply three times Corollary \ref{cor-product-rays} to obtain
	\begin{equation*}
		\begin{aligned}
			&d(\xi_{x,z}(R),\xi_{x,w}(R)) \leq 14\delta,\\ &d(\xi_{x,z}(R),\xi_{x,w'}(R)) \leq 14\delta,\\ &d(\xi_{x,w}(T),\xi_{x,w'}(T)) \leq 14\delta.
		\end{aligned}
	\end{equation*}
	By triangle inequality we have
	\begin{equation*}
		\begin{aligned}
			\vert T-R \vert -D-28\delta \leq 
			d(x,g\xi_{x,w}(T)) 
			%&\leq d(x,g\xi_{x,z}(R)) + d(g\xi_{x,z}(R), g\xi_{x,w}(R)) + d(g\xi_{x,w}(R),g\xi_{x,w}(T))\\
			\leq \vert T-R \vert +D+28\delta.
		\end{aligned}
	\end{equation*}
	%Again by the triangle inequality it holds $d(x,g\xi_{x,w}(T)) \geq \vert T-R \vert -D-28\delta$. 
	Similar estimates hold for $d(x,g\xi_{x,w'}(T))$. We want to estimate $d(\xi_{x,gw}(\vert T-R \vert), g\xi_{x,w}(T))$.
	The two rays $\xi_{x,gw}$ and $g\xi_{x,w}$ define the same point $gw$ of $\partial X$ and $d(x,gx)\leq R+14\delta+D$, so by Lemma \ref{parallel-geodesics} there exist $t_1,t_2\geq 0$ with $t_1+t_2 \leq R+14\delta+D$ such that $d(\xi_{x,gw}(t+t_1), g\xi_{x,w}(t+t_2)) \leq 8\delta$ for all $t\geq 0$. We apply this property to $t=T-t_2 + 18\delta + D$, which is non-negative since $T\ge R-4\delta$, finding 
	$$d(\xi_{x,gw}(T-t_2+t_1 + 18\delta + D), g\xi_{x,w}(T + 18\delta + D)) \leq 8\delta.$$
	By this inequality and the estimates on $d(x,g\xi_{x,w}(T))$ we get
	$$\vert T - R \vert - 2D - 54\delta \leq d(x,\xi_{x,gw}(T-t_2+t_1+18\delta+D)) \leq \vert T - R \vert + 2D + 54\delta,$$
	so 
	$$\vert T - R \vert - 2D - 54\delta \leq T-t_2+t_1+18\delta+D \leq \vert T - R \vert + 2D + 54\delta.$$
	Therefore by triangle inequality
	$$d(\xi_{x,gw}(\vert T-R \vert), g\xi_{x,w}(T)) \leq 80\delta + 3D.$$
	Analogously we get $d(\xi_{x,gw'}(\vert T-R \vert), g\xi_{x,w'}(T)) \leq 80\delta + 3D.$
	Combining these two estimates we conclude $d(\xi_{x,gw}(\vert T-R \vert), \xi_{x,gw'}(\vert T-R \vert))\leq 160\delta + 6D$. By Lemma \ref{product-rays} we have $(gw,gw')_x > \vert T - R \vert - 80 \delta - 3D$, so
	\begin{equation*}
		\begin{aligned}
			D_x(gw,gw') &\leq e^{-a_\delta(\vert T - R \vert - 80\delta - 3D)} \\
			&\leq e^{a_\delta(80\delta + 4D)}\cdot \frac{e^{-a_\delta D}}{e^{-a_\delta R}}\cdot e^{-a_\delta T} \\
			&= e^{a_\delta(80\delta + 4D)}\cdot \frac{\rho_0}{\rho}\cdot D_x(w,w')   ,
		\end{aligned}
	\end{equation*}
	where we used the definition of standard visual metric, $\vert T - R \vert \leq T-R$ and $\rho_0 = e^{-a_\delta D}$. We prove now the other inequality. We have $D_x(w,w') = e^{a_\delta T} \leq e^{a_\delta (w,w')_x}$, so $(w,w')_x \leq T$. We set $b=47\delta + D$ and $T' = T + b$. By Lemma \ref{product-rays}.(ii) we know that $d(\xi_{x,w}(T'), \xi_{x,w'}(T')) \geq 2b$. 
	We can argue in the same way as before with $t=T'-t_2$ finding
	$$d(\xi_{x,gw}(T'-R), g\xi_{x,w}(T')) \leq 44\delta + D,$$
	and the analogous estimate for $w'$. Therefore
	$$d(\xi_{x,gw}(T'-R), \xi_{x,gw'}(T'-R)) \geq 2b - 88\delta - 2D > 4\delta.$$
	By Lemma \ref{product-rays}.(i) we have $(gw,gw')_x < T'-R + \delta = T - R + 48\delta + D$.
	Therefore
	$$D_x(gw,gw') \geq \frac{1}{2}e^{-a_\delta(T - R + 48\delta + D)} = \frac{1}{2}e^{-48\cdot a_\delta\cdot\delta}\cdot \frac{\rho_0}{\rho}\cdot D_x(w,w').$$
	Thus $\Phi$ is $L_0$-biLipschitz with $L_0 = L_0(\delta,D) = \max\lbrace e^{a_\delta(80\delta + 4D)}, 2e^{48\cdot a_\delta\cdot\delta}\rbrace$ from $(B(z,\rho)\cap \Lambda(\Gamma), \frac{\rho_0}{\rho}\cdot D_x) \to \Lambda(\Gamma)$.
	We need to show that $\Phi(B(z,\rho)\cap \Lambda(\Gamma))\supseteq B(\Phi(z),\frac{\rho_0}{L_0})\cap \Lambda(\Gamma)$. The map $\Phi$ is a well defined self-homeomorphism of $\Lambda(\Gamma)$, so every $w\in B(\Phi(z),\frac{\rho_0}{L_0}) \cap \Lambda(\Gamma)$ is of the form $\Phi(w')$ for some $w'\in \Lambda(\Gamma)$. We know that $D_x(\Phi(w'), \Phi(z)) \leq \frac{\rho_0}{L_0}$, then $D_x(w',z) \leq L_0\cdot \frac{\rho}{\rho_0}\cdot D_x(\Phi(w'), \Phi(z)) \leq \rho$, i.e. $w'\in B(z,\rho) \cap \Lambda(\Gamma)$. This concludes the proof.
\end{proof}

\begin{cor}
	\label{cor-ultimo}
	Let $(X_n,x_n,\Gamma_n) \underset{\textup{eq-pGH}}{\longrightarrow} (X_\infty,x_\infty,\Gamma_\infty)$, with $(X_n,x_n,\Gamma_n) \in \mathcal{M}(\delta,D)$. Then the spaces $\Lambda(\Gamma_n)$ are uniformly q.s.s. and uniformly perfect.
\end{cor}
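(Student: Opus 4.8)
The plan is to prove the two assertions by completely different means: the uniform quasi-selfsimilarity is automatic from what has already been established, while the uniform perfectness is where the equivariant pointed Gromov--Hausdorff hypothesis is genuinely used. Throughout, each $\Lambda(\Gamma_n)$ is equipped with a standard visual metric $D_{x_n}$ centered at $x_n$ of parameter $a_\delta=\tfrac{1}{4\delta\log_2 e}$, as in the running convention of this section.

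\emph{Uniform quasi-selfsimilarity.} This needs no convergence at all. By Proposition~\ref{prop-hyp-qss} the limit set of every triple in $\mathcal{M}(\delta,D)$ is $(L_0,\rho_0)$-q.s.s., with $L_0=L_0(\delta,D)$ and $\rho_0=\rho_0(\delta,D)$ depending only on $\delta$ and $D$; moreover the parameter $a_\delta$ of the visual metric depends only on $\delta$. Since all the $(X_n,x_n,\Gamma_n)$ lie in $\mathcal{M}(\delta,D)$, the family $\{(\Lambda(\Gamma_n),D_{x_n})\}_n$ is therefore uniformly $(L_0,\rho_0)$-q.s.s. Note also that for a standard visual metric one always has $\mathrm{Diam}(\Lambda(\Gamma_n))\le 1$, so the diameters are uniformly bounded, matching the setting of Theorem~\ref{theorem-main-intro}.

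\emph{Uniform perfectness.} Here I would reduce to Corollary~\ref{cor-hyp-uniform-perfect}, which produces a single constant $a_0=a_0(\delta,D,H)$ making $\Lambda(\Gamma)$ $a_0$-uniformly perfect for every $(X,x,\Gamma)\in\mathcal{M}(\delta,D)$ with $h_\Gamma\le H$. So it suffices to find $H$ with $h_{\Gamma_n}\le H$ for all $n$. This step genuinely requires the convergence: no such bound holds on $\mathcal{M}(\delta,D)$ in general, since the locally finite Cayley trees of the free groups $F_r$, $r\ge 2$, all lie in a single class $\mathcal{M}(0,D')$ while $h_{F_r}=\log(2r-1)\to+\infty$. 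The input I would invoke is the upper semicontinuity of the critical exponent under equivariant pointed Gromov--Hausdorff convergence, a result available in \cite{Cav21ter}: along the convergent sequence it gives $\limsup_{n}h_{\Gamma_n}\le h_{\Gamma_\infty}$, morally because the orbit counting functions $\#\big(\Gamma_n x_n\cap\overline B(x_n,T)\big)$ are, up to a bounded enlargement of the radius $T$, dominated in the limit by $\#\big(\Gamma_\infty x_\infty\cap\overline B(x_\infty,T+1)\big)$, after which one divides by $T$ and lets $T\to+\infty$. Since $(X_\infty,x_\infty,\Gamma_\infty)\in\mathcal{M}(\delta,D)$ by \cite[Theorem A]{Cav21ter} and $X_\infty$ is proper with $\Gamma_\infty$ acting cocompactly on $\mathrm{QC\text{-}Hull}(\Lambda(\Gamma_\infty))$, we have $h_{\Gamma_\infty}<+\infty$; hence $h_{\Gamma_n}\le h_{\Gamma_\infty}+1=:H$ for all $n$ large, and enlarging $H$ to absorb the finitely many remaining (individually finite) values yields the uniform bound.

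\emph{Conclusion and main obstacle.} Feeding this $H$ into Corollary~\ref{cor-hyp-uniform-perfect} gives $a_0=a_0(\delta,D,H)$ with every $\Lambda(\Gamma_n)$ $a_0$-uniformly perfect, which together with the first step proves that $\{\Lambda(\Gamma_n)\}_n$ is uniformly perfect and uniformly q.s.s. Everything here is a repackaging of Proposition~\ref{prop-hyp-qss}, Proposition~\ref{prop-Ahlfors-regularity}, Corollary~\ref{cor-hyp-uniform-perfect} and \cite[Theorem A]{Cav21ter}; the only non-formal ingredient is the uniform upper bound on the critical exponents $h_{\Gamma_n}$. I expect the main subtlety to be in the precise continuity statement for the critical exponent under eq-pGH convergence — the delicate points there being the possible loss of orbit points in the limit and the uniformity of the comparison of counting functions — but for the present corollary only the easy one-sided (upper) inequality is required, so even a one-sided version of that statement suffices.
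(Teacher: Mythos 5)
Your proposal is correct and essentially coincides with the paper's proof: uniform quasi-selfsimilarity is taken directly from Proposition \ref{prop-hyp-qss} (constants depending only on $\delta,D$, no convergence needed), and uniform perfectness from Corollary \ref{cor-hyp-uniform-perfect} once the critical exponents $h_{\Gamma_n}$ are uniformly bounded, which is exactly where the equivariant convergence enters. The only difference is cosmetic: the paper gets the bound $h_{\Gamma_n}\le H$ by quoting \cite[Corollary 5.9]{Cav21ter} directly, whereas you re-derive it from semicontinuity of the critical exponent along the convergent sequence together with $h_{\Gamma_\infty}<+\infty$ for the limit triple.
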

\begin{proof}
	By Proposition \ref{prop-hyp-qss} all the spaces $\Lambda(\Gamma_n)$ are compact and $(L_0,\rho_0)$-q.s.s. By
	\cite[Corollary 5.9]{Cav21ter} there exists $H\geq 0$ such that $h_{\Gamma_n} \leq H$ for all $n\in \mathbb{N}$. Then $\Lambda(\Gamma_n)$ is $a_0$-uniformly perfect for the same $0<a_0<1$, by Corollary \ref{cor-hyp-uniform-perfect}.
\end{proof}

The last step we need is the following.

\begin{prop}
	\label{prop-convergence-visual-distances}
	If $(X_n,x_n,\Gamma_n) \underset{\textup{eq-pGH}}{\longrightarrow} (X_\infty,x_\infty,\Gamma_\infty)$, with $(X_n,x_n,\Gamma_n) \in \mathcal{M}(\delta,D)$, then there exist visual metrics $D_n \in \mathcal{J}(\Lambda(\Gamma_n))$ for $n\in \mathbb{N} \cup \lbrace \infty \rbrace$ such that $(\Lambda(\Gamma_n), D_n)\underset{\textup{GH}}{\longrightarrow}(\Lambda(\Gamma_\infty), D_\infty)$, up to a subsequence.
\end{prop}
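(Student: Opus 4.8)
The plan is to run the argument through ultralimits, exploiting that visual metrics are comparable to $e^{-a_\delta(\cdot,\cdot)_x}$ and that the Gromov product of two boundary points is determined, up to an additive error depending only on $\delta$, by the divergence of the corresponding geodesic rays issuing from $x$ — a quantity that is stable under ultralimits. Fix a non-principal ultrafilter $\omega$. For each $n$ choose a standard visual metric $\widehat{D}_n$ on $\partial X_n$ centered at $x_n$ with parameter $a_\delta$, and set $D_n := \widehat{D}_n|_{\Lambda(\Gamma_n)} \in \mathcal{J}(\Lambda(\Gamma_n))$. By Proposition \ref{prop-Ahlfors-regularity} (with $h_{\Gamma_n}\le H$ as provided in the proof of Corollary \ref{cor-ultimo}), or by Corollary \ref{cor-ultimo} together with Proposition \ref{prop-perfect-implies-uniform}(i), the spaces $(\Lambda(\Gamma_n),D_n)$ are uniformly doubling, and since Gromov products are non-negative they have diameter $\le 1$; hence the ultralimit metric space $(\Lambda_\omega,D_\omega) := \omega\text{-}\lim(\Lambda(\Gamma_n),D_n)$ is compact, and by Proposition \ref{prop-GH-ultralimit}(ii) it suffices to produce a visual metric $D_\infty \in \mathcal{J}(\Lambda(\Gamma_\infty))$ and an isometry $(\Lambda_\omega,D_\omega)\cong(\Lambda(\Gamma_\infty),D_\infty)$.

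Form the ultralimit triple $(X_\omega,x_\omega,\Gamma_\omega) := \omega\text{-}\lim(X_n,x_n,\Gamma_n)$; by \cite{Cav21ter} it belongs to $\mathcal{M}(\delta,D)$ and is $\Gamma$-equivariantly isometric to $(X_\infty,x_\infty,\Gamma_\infty)$, so $X_\omega$ is proper, geodesic and $\delta$-hyperbolic. For $z_n\in\Lambda(\Gamma_n)$ the rays $\xi_{x_n,z_n}\colon[0,+\infty)\to X_n$ are $1$-Lipschitz and start at the basepoints, so $\xi_\omega := \omega\text{-}\lim\xi_{x_n,z_n}$ is a geodesic ray of $X_\omega$ from $x_\omega$; I define $\Psi_\omega(\omega\text{-}\lim z_n) := \xi_\omega^+\in\partial X_\omega$. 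Applying Lemma \ref{product-rays}(i)--(ii) once inside each $X_n$ and once inside $X_\omega$ yields, for ultralimits $z_\omega,z_\omega'$ of sequences $z_n,z_n'\in\Lambda(\Gamma_n)$, the estimate $\bigl|\,\omega\text{-}\lim(z_n,z_n')_{x_n} - (\Psi_\omega(z_\omega),\Psi_\omega(z_\omega'))_{x_\omega}\,\bigr|\le 6\delta$, which together with Lemma \ref{parallel-geodesics} shows that $\Psi_\omega$ is well defined, injective and continuous. Finally, by Lemma \ref{approximation-ray-line} the ray $\xi_{x_n,z_n}$ stays within $14\delta$ of $\textup{QC-Hull}(\Lambda(\Gamma_n))$, on which $\Gamma_n x_n$ is $D$-dense; passing to the ultralimit and observing that suitable orbit points $g_\omega x_\omega$, $g_\omega\in\Gamma_\omega$, fellow-travel $\xi_\omega$, one gets $\xi_\omega^+\in\Lambda(\Gamma_\omega)$, i.e. $\Psi_\omega(\Lambda_\omega)\subseteq\Lambda(\Gamma_\omega)$.

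The map $\Psi_\omega$ intertwines the ultralimit action of $\Gamma_\omega$ on $\Lambda_\omega$ with the boundary action on $\Lambda(\Gamma_\omega)$ (two geodesic rays sharing an endpoint at infinity stay at bounded distance by Lemma \ref{parallel-geodesics}, so $\xi_{x_n,g_nz_n}$ and $g_n\xi_{x_n,z_n}$ have the same ultralimit endpoint). Hence $\Psi_\omega(\Lambda_\omega)$ is a closed, non-empty, $\Gamma_\omega$-invariant subset of $\partial X_\omega$; it has at least two points, because $\Lambda(\Gamma_n)$ is $a_0$-uniformly perfect (Corollary \ref{cor-ultimo}) while the codiameter bound forces the $D_n$-diameter of $\Lambda(\Gamma_n)$ to be bounded below, so $\Lambda_\omega$ carries two points at definite distance. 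Since the limit set is the smallest closed $\Gamma_\omega$-invariant subset of $\partial X_\omega$ with more than one point, $\Psi_\omega\colon\Lambda_\omega\to\Lambda(\Gamma_\omega)$ is a bijection; under the equivariant isometry $X_\omega\cong X_\infty$ we view it as a bijection onto $\Lambda(\Gamma_\infty)$ and let $D_\infty$ be the metric it transports from $D_\omega$. Taking the $\omega$-limit of the standard-visual inequalities $(3-2e^{a_\delta\delta})e^{-a_\delta(z_n,z_n')_{x_n}}\le D_n(z_n,z_n')\le e^{-a_\delta(z_n,z_n')_{x_n}}$ and using the $6\delta$-estimate above, $D_\infty$ satisfies a visual-metric inequality on $\Lambda(\Gamma_\infty)$ with a constant $V = V(\delta)$; hence it is bi-Lipschitz to the restriction of any standard visual metric of $\partial X_\infty$ centered at $x_\infty$, and so $D_\infty\in\mathcal{J}(\Lambda(\Gamma_\infty))$. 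We have produced the required isometry, and Proposition \ref{prop-GH-ultralimit}(ii) gives $(\Lambda(\Gamma_{n_k}),D_{n_k})\underset{\textup{GH}}{\longrightarrow}(\Lambda(\Gamma_\infty),D_\infty)$ along a subsequence.

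I expect the main obstacle to be the geometric identification underlying the second and third steps: that the ultralimit of the limit sets really is the limit set of the ultralimit group, i.e. the well-definedness and equivariance of $\Psi_\omega$ and, above all, the inclusion $\Psi_\omega(\Lambda_\omega)\subseteq\Lambda(\Gamma_\omega)$, which rest on the ray-approximation statements in Lemmas \ref{approximation-ray-line}--\ref{parallel-geodesics} and on the equivariant-convergence machinery of \cite{Cav21ter}. Once this is secured, the comparison of the visual metrics — the genuinely new ingredient here — is a routine passage to the $\omega$-limit of the standard-visual estimates.
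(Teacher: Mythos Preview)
Your proof is correct and follows the same ultralimit strategy as the paper: choose standard visual metrics $D_n$, identify $\omega\text{-}\lim(\Lambda(\Gamma_n),D_n)$ with $\Lambda(\Gamma_\omega)$ via the endpoint map $\Psi_\omega$, push the ultralimit metric forward, and verify it is visual using Lemma~\ref{product-rays}. The only substantive difference is in the justification of the bijectivity of $\Psi_\omega$: the paper outsources this step entirely to \cite[Proposition~5.11 and the proof of Theorem~A.(i)]{Cav21ter}, whereas you give a self-contained argument --- the $6\delta$ Gromov-product estimate for well-definedness and injectivity, orbit approximation through Lemma~\ref{approximation-ray-line} for the inclusion $\Psi_\omega(\Lambda_\omega)\subseteq\Lambda(\Gamma_\omega)$, and minimality of the limit set for surjectivity; this last use of minimality is a clean alternative to the direct construction in \cite{Cav21ter}.
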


\begin{proof}
	We fix a non-principal ultrafilter $\omega$.
	We denote by $(X_\omega, x_\omega, \Gamma_\omega)$ the ultralimit triple of the sequence $(X_n,x_n, \Gamma_n)$ (cp. \cite{Cav21ter}): it is equivariantly isometric to $(X_\infty,x_\infty,\Gamma_\infty)$ by \cite[Proposition 3.13]{Cav21ter}. We equip each $\Lambda(\Gamma_n)$ with a standard visual metric $D_n$ of center $x_n$ and parameter $a_\delta$.	
	Every point of the space $\omega$-$\lim(\Lambda(\Gamma_n), D_{n})$ is an equivalence class of sequences $(z_n)$ with $z_n\in \Lambda(\Gamma_n)$. Associated to this sequence there is a sequence of geodesic rays $\xi_{x_n,z_n}$ of $X_n$. It is classical (cp \cite{CavS20}, Lemma A.7) that this sequence of geodesic rays define a limit geodesic ray $\xi_{x_\omega, z_\omega}$, with $z_\omega \in \partial X_\omega$. The map $\Psi \colon \omega$-$\lim(\Lambda(\Gamma_n), D_{n}) \to \partial X_\omega$ defined by $\Psi((z_n)) = z_\omega$ is a well defined homeomorphism by \cite[Proposition 5.11]{Cav21ter}. Moreover the proof of  \cite[Theorem A.(i)]{Cav21ter} shows that the image of $\Psi$ is exactly $\Lambda(\Gamma_\omega)$. We denote by $D_\omega$ the distance induced by $\Psi$ on $\Lambda(\Gamma_\omega)$. By definition the two spaces $\omega$-$\lim(\Lambda(\Gamma_n), D_n)$ and $(\Lambda(\Gamma_\omega), D_\omega)$ are isometric. Proposition \ref{prop-GH-ultralimit} implies that, up to a subsequence, $(\Lambda(\Gamma_n), D_n) \underset{\textup{GH}}{\longrightarrow} (\Lambda(\Gamma_\omega), D_\omega)$. We claim that $D_\omega$ is a visual metric on $\Lambda(\Gamma_\omega)$. This would imply, since $(X_\infty, x_\infty, \Gamma_\infty)$ and $(X_\omega, x_\omega, \Gamma_\omega)$ are equivariantly isometric, that $(\Lambda(\Gamma_n), D_n) \underset{\textup{GH}}{\longrightarrow} (\Lambda(\Gamma_\infty), D_\infty)$ for a visual metric $D_\infty$ on $\Lambda(\Gamma_\infty)$.\\
	Let us prove the claim.
	We take two points $\Psi(z),\Psi(z') \in \Lambda(\Gamma_\omega)$, with $z=\omega$-$\lim z_n$, $z' = \omega$-$\lim z_n' \in \omega$-$\lim(\Lambda(\Gamma_n), D_{n})$. By definition $D(\Psi(z),\Psi(z')) = \omega$-$\lim D_{n}(z_n,z_n') = e^{-a_\delta R}$ for some $R\geq 0$. From \eqref{eq-new-visual} we have $\omega$-$\lim (z_n,z_n')_{x_n} \geq R - 4\delta$, then $\omega$-$\lim d(\xi_{x_n,z_n}(R-5\delta), \xi_{x_n,z_n'}(R-5\delta)) \leq 4\delta$ by Lemma \ref{product-rays}.(i). This implies $d(\xi_{x_\omega,z_\omega}(R-5\delta), \xi_{x_\omega,z_\omega'}(R-5\delta)) \leq 4\delta$ and so $(\Psi(z), \Psi(z'))_{x_\omega} > R-8\delta$ by Lemma \ref{product-rays}.(ii). This means 
	$$D_\omega(\Psi(z),\Psi(z')) = e^{-a_\delta R} \geq e^{-a_\delta \cdot 8\delta}\cdot e^{-a_\delta(\Psi(z), \Psi(z'))_{x_\omega}} = \frac{1}{4}\cdot e^{-a_\delta(\Psi(z), \Psi(z'))_{x_\omega}}.$$
	Analogously, with the same notation as above, we have $\omega$-$\lim (z_n,z_n')_{x_n} \leq R$ and so $\omega$-$\lim d(\xi_{x_n,z_n}(R+3\delta), \xi_{x_n,z_n'}(R+3\delta)) \geq 6\delta$ by Lemma \ref{product-rays}.(ii). By definition $d(\xi_{x_\omega,z_\omega}(R+3\delta), \xi_{x_\omega, z_\omega'}(R+3\delta)) \geq 6\delta$, so $(\Phi(z), \Phi(z'))_{x_\omega} \leq R+4\delta$ by Lemma \ref{product-rays}.(i). This means
	$$D_\omega(\Psi(z),\Psi(z')) = e^{-a_\delta R} \leq e^{a_\delta \cdot 4\delta}\cdot e^{-a_\delta(\Psi(z), \Psi(z'))_{x_\omega}} = 2\cdot e^{-a_\delta(\Psi(z), \Psi(z'))_{x_\omega}}.$$
	This shows that $D_\omega$ is a visual metric on $\Lambda(\Gamma_\omega)$ and concludes the proof.
\end{proof}
\vspace{2mm}
\noindent Theorem \ref{theorem-main-introduction} follows  from Corollary \ref{cor-ultimo}, Proposition \ref{prop-convergence-visual-distances} and Proposition \ref{prop-continuity}.\linebreak

\bibliographystyle{alpha}
\bibliography{Continuity_of_conformal_dimension}

\begin{thebibliography}{CDP90}

\bibitem[BH13]{BH09}
M.~Bridson and A.~Haefliger.
\newblock {\em Metric spaces of non-positive curvature}, volume 319.
\newblock Springer Science \& Business Media, 2013.

\bibitem[BK13]{BK13}
M.~Bourdon and B.~Kleiner.
\newblock Combinatorial modulus, the combinatorial loewner property, and
  coxeter groups.
\newblock {\em Groups, Geometry, and Dynamics}, 7(1):39--107, 2013.

\bibitem[BM17]{BM17}
M.~Bonk and D.~Meyer.
\newblock {\em Expanding thurston maps}, volume 225.
\newblock American Mathematical Soc., 2017.

\bibitem[BS11]{BS11}
M.~Bonk and O.~Schramm.
\newblock Embeddings of gromov hyperbolic spaces.
\newblock {\em Selected Works of Oded Schramm}, pages 243--284, 2011.

\bibitem[Cav21]{Cav21ter}
N.~Cavallucci.
\newblock Continuity of critical exponent of quasiconvex-cocompact groups under
  gromov-hausdorff convergence.
\newblock {\em arXiv preprint arXiv:2105.11764}, 2021.

\bibitem[CDP90]{CDP90}
M.~Coornaert, T.~Delzant, and A.~Papadopoulos.
\newblock {\em G{\'e}om{\'e}trie et th{\'e}orie des groupes: les groupes
  hyperboliques de Gromov}.
\newblock Lecture notes in mathematics. Springer-Verlag, 1990.

\bibitem[CS20]{CavS20bis}
N.~Cavallucci and A.~Sambusetti.
\newblock Discrete groups of packed, non-positively curved, gromov hyperbolic
  metric spaces.
\newblock {\em arXiv preprint arXiv:2102.09829}, 2020.

\bibitem[CS21]{CavS20}
N.~Cavallucci and A.~Sambusetti.
\newblock Packing and doubling in metric spaces with curvature bounded above.
\newblock {\em Mathematische Zeitschrift}, pages 1--46, 2021.

\bibitem[DK18]{DK18}
C.~Dru{\c{t}}u and M.~Kapovich.
\newblock {\em Geometric group theory}, volume~63.
\newblock American Mathematical Soc., 2018.

\bibitem[DS97]{DS97}
G.~David and S.~Semmes.
\newblock {\em Fractured fractals and broken dreams: self-similar geometry
  through metric and measure}, volume~7.
\newblock Oxford University Press, 1997.

\bibitem[Ha{\"\i}07]{Hai07}
P.~Ha{\"\i}ssinsky.
\newblock G{\'e}om{\'e}trie quasiconforme, analyse au bord des espaces
  m{\'e}triques hyperboliques et rigidit{\'e}s, d’apr{\`e}s mostow, pansu,
  bourdon, pajot, bonk, kleiner.
\newblock {\em S{\'e}minaire Bourbaki}, 60:08, 2007.

\bibitem[Hei01]{Hei01}
J.~Heinonen.
\newblock {\em Lectures on analysis on metric spaces}.
\newblock Springer Science \& Business Media, 2001.

\bibitem[Jan17]{Jan17}
D.~Jansen.
\newblock Notes on pointed gromov-hausdorff convergence.
\newblock {\em arXiv preprint arXiv:1703.09595}, 2017.

\bibitem[Kle06]{Kle06}
B.~Kleiner.
\newblock The asymptotic geometry of negatively curved spaces: uniformization,
  geometrization and rigidity.
\newblock 2:743--768, 2006.

\bibitem[Kwa20]{Kwa20}
J.~Kwapisz.
\newblock Conformal dimension via p-resistance: Sierpinski carpet.
\newblock 45(1), 2020.

\bibitem[MT10]{MT10}
J.~Mackay and J.~Tyson.
\newblock {\em Conformal dimension: theory and application}, volume~54.
\newblock American Mathematical Soc., 2010.

\bibitem[Pau96]{Pau96}
F.~Paulin.
\newblock Un groupe hyperbolique est déterminé par son bord.
\newblock {\em Journal of the London Mathematical Society}, 54(1):50--74, 1996.

\bibitem[Pia11]{Pia11}
M.C. Piaggio.
\newblock {\em Jauge conforme des espaces m{\'e}triques compacts}.
\newblock PhD thesis, Universit{\'e} de Provence-Aix-Marseille I, 2011.

\bibitem[Pia12]{Pia12}
M.C. Piaggio.
\newblock Conformal dimension and combinatorial modulus of compact metric
  spaces.
\newblock {\em Comptes Rendus Mathematique}, 350(3-4):141--145, 2012.

\bibitem[Sul82]{Sul82}
D.~Sullivan.
\newblock Seminar on hyperbolic geometry and conformal dynamical systems.
\newblock {\em preprint IHES}, 1982.

\bibitem[TV80]{TV80}
P.~Tukia and J.~V{\"a}is{\"a}l{\"a}.
\newblock Quasisymmetric embeddings of metric spaces.
\newblock {\em Ann. Acad. Sci. Fenn. Ser. A I Math.}, 5:97--114, 1980.

\bibitem[Tys00]{Tys00}
J.~Tyson.
\newblock Sets of minimal hausdorff dimension for quasiconformal maps.
\newblock {\em Proceedings of the American Mathematical Society},
  128(11):3361--3367, 2000.

\end{thebibliography}

\end{document}